\numberwithin{equation}{section}
\newtheorem{definition}{Definition}[section]
\newtheorem{theorem}{Theorem}[section]
\newtheorem{lemma}{Lemma}[section]
\theoremstyle{remark}
\newtheorem{remark}{Remark}[section]
\DeclareMathOperator{\diam}{\mathrm{diam}}
\DeclareMathOperator{\riem}{\mathrm{Rm}}
\DeclareMathOperator{\ric}{\mathrm{Ric}}
\DeclareMathOperator{\hess}{\mathrm{Hess}}
\DeclareMathOperator{\vol}{\mathrm{vol}}
\newcommand{\tm}{\textrm}
\newcommand{\rf}{\mathcal{RF}(n,B)}
\author{Panagiotis Gianniotis}
\address{Department of Mathematics, University of Toronto, 40 St George Street, Toronto, ON M5S 2E4, Canada}
\email{p.gianniotis@utoronto.ca}
\subjclass[2000]{}
\dedicatory{}
\keywords{}
\begin{document}
\title[Regularity theory for Type I Ricci flows]{Regularity theory for Type I Ricci flows}
\maketitle
\begin{abstract} 
We consider Type I Ricci flows and obtain integral estimates for the curvature tensor valid up to, and including, the singular time. Our estimates  partially extend to higher dimensions a curvature estimate recently shown to hold in dimension three  by Kleiner and Lott in \cite{KleinerLott14}. To do this we adapt the technique of quantitative stratification, introduced by Cheeger--Naber in \cite{CheegerNaber13a}, to this setting.
\end{abstract}
\section{Introduction}

In this paper we study complete Ricci flows $(M,g(t))_{t\in [0,T)}$ satisfying a curvature bound of the form
\begin{equation}\label{introtype}
\sup_M |\riem(g(t))|_{g(t)}\leq \frac{B}{T-t},
\end{equation}
for all $t\in [0,T)$. If $(g(t))_{t\in [0,T)}$ becomes singular as $t\rightarrow T$, namely
\begin{equation}
\lim_{t\rightarrow T}\sup_M |\riem(g(t))|_{g(t)}=+\infty.
\end{equation}
the singularity is classified as Type I, hence we will refer to  \eqref{introtype} as a Type I curvature bound. This kind of singular behaviour for the Ricci flow is very common and it is in fact  conjectured that for closed manifolds $M$ such singularities are generic; see for instance \cite{AngenentIsenbergKnopf15,IKS17}.

Our results provide $L^p$ bounds for the curvature along the flow assuming  Type I bounds. For instance, we obtain the following theorem.

\begin{theorem}\label{thm:curv_estimates}
 Let $(M^n,g(t))_{t\in [0,T)}$, $\dim M=n$, be a \emph{compact} Ricci flow satisfying \eqref{introtype}. Then, for every non-negative integer $j$ and $p\in (0,2)$ there exist $C_{p,j}(g(0))<+\infty$ such that for every  $t\in [0,T]$
\begin{eqnarray}
\int_{M} |\nabla^j \riem(g(t))|_{g(t)}^{\frac{p}{j+2}} d\mu_{g(t)} &\leq& C_{p,j}, \label{eqn:introder_curv_estimate1}  \\
\textrm{and}\quad \int_0^T \int_{M} | \nabla^j \riem(g(s))  |_{g(s)}^{\frac{p+2}{j+2}}d\mu_{g(s)}ds &\leq& C_{p,j}. \label{eqn:introder_curv_estimate2}
\end{eqnarray}

If $(g(t))_{t\in[0,T)}$ becomes singular at $T$, estimate \eqref{eqn:introder_curv_estimate1} is valid  at $t=T$ on the set $\Omega=\{ x\in M,\; \sup_{t\in[0,T)}|\riem(g)|_{g}(x,t)<+\infty\}$. Moreover, if $g(t)$ has positive isotropic curvature and $n=4$, the estimates above hold for any $p\in(0,3)$.
\end{theorem}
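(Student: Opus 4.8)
Both estimates will be deduced from a single bound on the sublevel sets of the curvature scale
\[
r_{\riem}(x,t):=\sup\Big\{r>0:\ |\riem(g)|_g\le r^{-2}\ \text{on the parabolic ball}\ P(x,t,r)\Big\}.
\]
By Shi's interior derivative estimates in their parabolic form, on the region $\{r_{\riem}(\cdot,t)\in[r,2r)\}$ one has $|\nabla^j\riem(g(t))|_{g(t)}\le C_j\,r^{-(j+2)}$, hence $|\nabla^j\riem(g(t))|^{p/(j+2)}\le C_j^{p/(j+2)}r^{-p}$ there. Summing over dyadic scales, \eqref{eqn:introder_curv_estimate1} reduces to the content estimate
\[
\mu_{g(t)}\big(\{x\in M:\ r_{\riem}(x,t)<r\}\big)\le C(\delta)\,r^{\,2-\delta}\qquad\text{for every }\delta>0,
\]
uniformly for $t\in[0,T)$, while \eqref{eqn:introder_curv_estimate2} reduces to the corresponding space-time estimate for $\{(x,s):r_{\riem}(x,s)<r\}$ inside $M\times[0,T)$, the replacement $p\mapsto p+2$ being precisely the parabolic weight of the extra integration in time. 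On $[0,T/2]$ everything is controlled by \eqref{introtype} and the fixed smooth metric $g(0)$, so only $t$ near $T$ matters; the case $t=T$ on $\Omega$ then follows from the smooth local convergence $g(t)\to g(T)$ on $\Omega$ together with Fatou's lemma. Under positive isotropic curvature with $n=4$ the target exponent improves from $2-\delta$ to $3-\delta$, and the same reduction yields $p\in(0,3)$.

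The content estimate is where quantitative stratification enters, and I would assemble three ingredients. \emph{(i) $\varepsilon$-regularity:} there exist $\varepsilon_0,\eta_0>0$ such that if, for some $s\le\sqrt{T-t_0}$, the parabolic rescaling of $g$ about $(x_0,t_0)$ by $s$ is $\eta_0$-close — in the pointed sense for which \eqref{introtype} supplies compactness — to the static flat flow on $\R^n$, then $r_{\riem}(x_0,t_0)\ge\varepsilon_0 s$; this is essentially Perelman's pseudolocality. \emph{(ii) Classification of the symmetric models:} because of \eqref{introtype} and Perelman's $\kappa$-noncollapsing, the reduced volume based at the singular time is monotone and pinched between two constants depending only on $B$ and $g(0)$, so by Naber's argument every tangent flow is a shrinking gradient Ricci soliton, and any such soliton that splits off an $\R^{n-1}$ factor is the Gaussian soliton on $\R^n$; when $n=4$ and $g(t)$ has positive isotropic curvature, the preservation of this condition together with the classification of four-dimensional singularity models leaves only $S^4$, $S^3\times\R$ and quotients, none of which splits off an $\R^2$ factor unless flat, so the rigidity already holds at the level of an $\R^{n-2}=\R^2$ splitting. \emph{(iii) The quantitative strata:} for $t_0$ near $T$ declare $x\in\mathcal S^{k}_{\eta,r}(t_0)$ if at no scale $s\in[r,\sqrt{T-t_0}]$ is the parabolic rescaling of $g$ about $(x,t_0)$ by $s$ within $\eta$ of a flow splitting off an $\R^{k+1}$ factor; adapting Cheeger--Naber one proves $\mu_{g(t_0)}\big(\mathcal S^{k}_{\eta,r}(t_0)\big)\le C(\eta,\delta)\,r^{(n-k)-\delta}$ from a cone-splitting lemma (almost symmetry at two effectively separated centers upgrades to one more symmetry), quantitative differentiation of the reduced volume (almost monotone with total variation bounded in terms of $B$ and $g(0)$, so only boundedly many scales carry a symmetry jump at each point), and a covering iteration over scales.

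Combining (i) and (ii): if $r_{\riem}(x,t_0)<\varepsilon_0 r$ then at no scale $s\in[r,\sqrt{T-t_0}]$ can the rescaled flow at $(x,t_0)$ be $\eta_0$-close to anything splitting off an $\R^{n-1}$ factor (respectively $\R^{n-2}$, in the positively isotropically curved case with $n=4$), so
\[
\{x\in M:\ r_{\riem}(x,t_0)<\varepsilon_0 r\}\ \subseteq\ \mathcal S^{\,n-2}_{\eta_0,r}(t_0)\qquad\big(\text{resp. }\mathcal S^{\,n-3}_{\eta_0,r}(t_0)\big).
\]
Applying (iii) with $k=n-2$ (resp. $k=n-3$) gives $\mu_{g(t_0)}(\{r_{\riem}(\cdot,t_0)<\varepsilon_0 r\})\le C(\delta)\,r^{2-\delta}$ (resp. $\le C(\delta)\,r^{3-\delta}$), which is exactly the content estimate above; it forces $\int_M r_{\riem}(x,t_0)^{-p}\,d\mu_{g(t_0)}<\infty$ uniformly in $t_0$ for every $p<2$ (resp. $p<3$), and together with the first paragraph this proves \eqref{eqn:introder_curv_estimate1}. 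The space-time bound \eqref{eqn:introder_curv_estimate2} follows in the same way, working with parabolic balls and the space-time version of the content estimate, where the extra time-direction of measure $\sim r^2$ (parabolically) near the singular set accounts for the shift of the exponent to $\frac{p+2}{j+2}$.

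\textbf{Main obstacle.} The crux is to run the quantitative stratification — the cone-splitting lemma and the Minkowski-type content estimate in particular — using only the one-sided bound \eqref{introtype}, so that none of the Riemannian comparison geometry underpinning the original Cheeger--Naber argument is available; everything must instead be driven by the compactness and the reduced-volume monotonicity special to Type I flows, and the notion of being ``$\eta$-close to a $k$-symmetric flow'' must be chosen so as to be stable under parabolic rescaling yet strong enough to feed both the $\varepsilon$-regularity in (i) and the splitting arguments in (iii). Verifying that the reduced volume has uniformly bounded total variation along all parabolic rescalings, which is what makes quantitative differentiation applicable here, is the technical heart of the matter.
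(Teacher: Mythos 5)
Your overall architecture does match the paper's: reduce \eqref{eqn:introder_curv_estimate1} and \eqref{eqn:introder_curv_estimate2} to bounds on sublevel sets of $r_{\riem}$ via Shi's estimates and a dyadic/layer-cake argument (for the space-time bound, the Type I hypothesis gives $r_{\riem}(\cdot,s)\geq c\sqrt{(T-s)/B}$, which is exactly how the paper converts the extra two powers into an integrable function of time), and obtain the content bound by combining an $\varepsilon$-regularity lemma based on pseudolocality with the soliton rigidity (an $\mathbb R^{n-1}$-splitting forces the Gaussian soliton; at most one Euclidean factor for four-dimensional PIC shrinkers), fed by a Cheeger--Naber type volume estimate for quantitative strata proved via quantitative differentiation of the reduced volume and a covering iteration. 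This is the route of Sections 3 and 4 of the paper.

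The genuine gap is in your ingredient (iii). The cone-splitting step as you state it --- ``almost symmetry at two effectively separated centers upgrades to one more symmetry'' --- is false for Ricci flow if ``separated'' means separated by a definite fraction of the scale, which is what the covering iteration in the Cheeger--Naber scheme requires. The shrinking cylinder $\mathbb S^2\times\R^{n-2}$ is the counterexample the paper itself emphasizes: every point is selfsimilar at every scale, two points in different $\mathbb S^2$-fibres are separated by a definite multiple of the scale at the corresponding time, yet no further Euclidean factor splits off. Hence selfsimilar points do not line up near a $k$-dimensional subspace at the given scale, and the refined covering at ``good'' scales cannot be run as in the elliptic and parabolic settings you cite. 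The missing ingredient is the spine structure: in the paper, $(\varepsilon,r,k,B)$-selfsimilarity requires closeness to a soliton with a uniform \emph{global} curvature bound and with the basepoint sent into the spine $S(N,h)$ (the minimum set of the density), and the spine diameter estimate $\diam_{\bar h(t)}K\leq D(n,B)\sqrt{-t}$ of Theorem \ref{thm:spine_geometry} shows the spine collapses onto the Euclidean factor as $t\to0$. The almost-splitting and line-up lemmas are then formulated with separation and tube radius $(D+\mu)\sqrt{\tau}$ measured in the time slice $g(-\tau)$ rather than proportionally to the scale, and the covering lemma works with balls at the times $-\tau_j$ after choosing $\alpha$ so small that $2D\sqrt{\alpha}<\beta$, which is what makes the tube much thinner than the covering radius; the volume estimate is first proved for $\vol_{g(-\tau)}$ and only afterwards transferred forward in time using the lower scalar curvature bound. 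Your ``main obstacle'' paragraph locates the difficulty in the bounded total variation of the reduced volume, but that is routine once the monotonicity of the singular reduced volume is in place; the actual crux is the failure of the naive line-up and the spine geometry that replaces it (together with the non-inflating volume bound of Zhang, which the ball-counting also needs and which your outline omits). Without these, the content estimate $\mu\big(\mathcal S^k_{\eta,r}\big)\lesssim r^{(n-k)-\delta}$ in your step (iii) is unsupported, and the rest of the argument does not close.
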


Notice that estimate \eqref{eqn:introder_curv_estimate1} agrees with the recent curvature estimate obtained by Kleiner--Lott in \cite{KleinerLott14} in dimension three. Moreover, the results in \cite{KleinerLott14} hold without the Type I assumption and even after the first singularity occurs. Our results on the other hand are valid in any dimension, which may hint to a general fact about weak solutions to Ricci flow. Notions of weak solutions to Ricci flow have recently been proposed by Haslhofer--Naber in \cite{HaslhoferNaber15} as well as Sturm in \cite{Sturm16} and Kopfer--Sturm in \cite{KopferSturm16}.

In \cite{KleinerLott14} the curvature estimate is a consequence of the study of a certain class of space-time manifolds that arise naturally as limits of Perelman's Ricci flow with surgery, as the associated fineness parameter goes to zero. In contrast, our approach bypasses Ricci flow with surgery, and instead uses the tangent flow analysis and monotonicity formula available for Type I Ricci flows. In particular, we adapt the technique of \textit{quantitative stratification}, recently introduced by Cheeger--Naber in \cite{CheegerNaber13a}, to this setting.

The ideas in \cite{CheegerNaber13a} are very general and have been applied in a wide range of geometric PDE, leading to improved curvature estimates; see \cite{CheegerNaber13b, ChHasNab13, ChHasNab15, BreinerLamm14}. However, to adapt these ideas to the Ricci flow we need to overcome a few issues, which we describe below.

We may define the singular set $\Sigma$ of a Ricci flow as the set of points with no neighbourhood where the curvature remains bounded as $t\rightarrow T$.  Under assumption \eqref{introtype}, Naber shows in \cite{Naber10} that tangent flows at the singular time, namely limits of appropriate pointed sequences of rescalings, are gradient shrinking Ricci solitons. Previously \v{S}e\v{s}um \cite{Sesum06} had shown that this is true in the case of compact tangent flows. Then, Enders--M\"uller--Topping in \cite{EMT} show that tangent flows are non-flat if and only if they are `centered' around singular points. Mantegazza--M\"uller \cite{MantegazzaMuller15} also prove these facts using a different approach.

Imitating the classical regularity theory for minimal surfaces or harmonic maps, as developed for instance in \cite{Federer70,SchoenUhlenbeck82,Almgren83,Simon93,White97}, it is natural to consider the stratification $$\Sigma_0\subset \cdots \subset \Sigma_{n-1}=\Sigma$$
  of $\Sigma$, where
\begin{equation*}
\Sigma_k=\{x\in\Sigma, \textrm{no tangent flow at $x$ splits more than $k$ Euclidean factors}\}.
\end{equation*}
In fact $\Sigma=\Sigma_{n-2}$, since any shrinking soliton splitting more than $n-2$ Euclidean factors should be the Gaussian soliton in the Euclidean space.

A more detailed study of this stratification is done in \cite{Gianniotis2017}. There, a key issue is that the properties of each $\Sigma_k$  relevant to singularity formation, as captured by the amount of the Euclidean factors split by the tangent flows, do not interact with the geometric properties of each $\Sigma_k$ as a subset of $(M,g(t))$:
in the shrinking round sphere example, $\Sigma=\Sigma_0=\mathbb S^n$ is an $n$-dimensional subset, but it converges to a $0$-dimensional space towards the singular time.

This is in contrast to other situations, where the interest is in the geometry of the singular set as a subset of a given ambient space. Similar issues appear when we try to adapt the philosophy of \cite{CheegerNaber13a} in this paper. 

Below we describe the results of the paper in more detail:

In Section \ref{section:monotonicity} we recall a monotone quantity for possibly singular Type I Ricci flows and its associated density that was introduced in \cite{Gianniotis2017}, based on Perelman's reduced volume, extending ideas from \cite{CaoHamiltonIlmanen04,Enders08, Naber10}. This leads to the notion of the \textit{spine} of a shrinking Ricci soliton with bounded curvature: the set where the density function attains its \textit{minimum}. It is then shown that the spine satisfies a diameter estimate, modulo the splitting of Euclidean factors; see Theorem \ref{thm:spine_geometry}. In particular this estimate shows that, as the flow induced by the soliton appraches its singular time, the spine collapses to a Euclidean space. This is a key fact that allows us to adapt the ideas in \cite{CheegerNaber13a} to the setting of Type I Ricci flows.

Now, let  $\mathcal C(n,B,\kappa_0,\kappa_1)$ be the class of complete Ricci flows $(M,g(t))_{t\in(-2,0)}$, such that $\dim M=n$ and
\begin{enumerate}
\item $|\riem(g(-\tau))|_{g(-\tau)}\leq B/\tau$ in $M$, for every $\tau\in(0,2)$.
\item $g(t)$ is $\kappa_0$ non-collapsed below scale $ 1$, namely
$$\vol_{g(t)}( B_{g(t)}(x, r ) )\geq \kappa_0 r^n,$$
for every $(x,t)\in M\times (-2,0)$ and $r\leq 1$ for which $R(g(t))\leq r^{-2}$ in  $B_{g(t)}(x, r )$, $R$ denoting the scalar curvature.
\item $g(t)$ is $\kappa_1$ non-inflated below scale $1$, namely
$$\vol_{g(t)}( B_{g(t)}(x, r ) )\leq\kappa_1 r^n,$$
for every $(x,t)\in M\times [-1,0)$ and $r\leq 1$, $t-r^2>-2$, for which
$$R\leq \frac{c(n)B}{t-\bar t},$$
in $B_{g(t)}(x, r )$ for all $\bar t\in [t-r^2,t]$, where $c(n)<+\infty$ is a constant such that $|R(g)|\leq c(n)|\riem(g)|_g$, for any Riemannian metric $g$.
\end{enumerate}

In Section \ref{section:q_stratification},  following \cite{CheegerNaber13a}, we define the quantitative stratification $S^k_{\eta,\tau}$, where $k\geq 0$ is an integer, $\eta>0$ and $\tau\in (0,1]$, for each $(M,g(t))_{t\in (-2,0)}$ in $\mathcal{C}(n,B,\kappa_0,\kappa_1)$. The intuition behind the sets $S^k_{\eta,\tau}$ is that there is no scale $\bar \tau\in [\tau,1]$ at which the flow around $x\in S^k_{\eta,\tau}$  is $\eta$-close to a shrinking Ricci soliton that splits more than $k$ Euclidean factors. We refer the reader to Section \ref{section:q_stratification}  for the detailed definition.

The relationship of the sets $S^k_{\eta,\tau}$ to $\Sigma_k$ is given by
\begin{equation*}
\Sigma_k =  \bigcup_{\eta} \bigcap_{\tau}   S^k_{\eta,\tau}.
\end{equation*}
We show that the quantitative stratification satisfies the following volume estimate:

\begin{theorem} \label{thm:vol_estimate}
Let $(M,g(t))_{t\in (-2,0)} \in\mathcal C(n,B, \kappa_0,\kappa_1)$. Then, there exist $\alpha(B),\beta(B) \in (0,1)$ and $C_\eta=C(n,B,\kappa_0,\kappa_1,\eta) <+\infty$, such that for every $ 0<\tau\leq\alpha$ 
\begin{equation}\label{intro:vol_estimate}
\vol_{g(- \tau)}\left(S^k_{\eta, \tau}\cap B_{g(-\alpha)}(x,\beta)\right) \leq C_\eta   \tau^{\frac{n-k-\eta}{2}}.
\end{equation}
\end{theorem}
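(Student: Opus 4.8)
The plan is to follow the quantitative stratification scheme of Cheeger--Naber, with the crucial geometric input being the spine diameter estimate of Theorem \ref{thm:spine_geometry} and the parabolic nature of the Ricci flow. First I would set up the relevant scale-invariant framework: for $x \in M$ and a scale $r$ with $t - r^2 > -2$, let $\mathcal{F}_{x,r}(g)$ denote the parabolic rescaling of the flow by $r^{-1}$ centered at $(x,t)$, so that studying $S^k_{\eta,\tau}$ amounts to tracking, over the dyadic scales $r_i = 2^{-i}$ with $2^{-i} \geq \sqrt{\tau}$, whether $\mathcal{F}_{x,r_i}$ is $\eta$-close (in the appropriate pointed sense, controlled by the monotone density of Section \ref{section:monotonicity}) to a gradient shrinking soliton splitting more than $k$ Euclidean factors. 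The non-collapsing (2) and non-inflating (3) hypotheses guarantee that the density is bounded above and below on the relevant region, which is what makes the tangent flow compactness and the almost-rigidity (``cone-splitting'') arguments run.

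The core of the proof is a quantitative cone-splitting / dimension-reduction estimate: I would show that if, at a definite fraction of dyadic scales $r_i$ between $\sqrt{\tau}$ and $\beta$, the rescaled flow at $x$ is $\eta$-close to some shrinking soliton but at none of these scales $\eta$-close to one splitting $k+1$ Euclidean factors, then the set of such $x$ inside $B_{g(-\alpha)}(x_0,\beta)$ is ``effectively'' concentrated near a $k$-dimensional set at each such scale. Quantitatively this is packaged as: the number of $r_i$-balls needed to cover $S^k_{\eta,\tau} \cap B_{g(-\alpha)}(x_0,\beta)$ is at most $C_\eta\, r_i^{-(k+\eta)}$. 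Here the role of Theorem \ref{thm:spine_geometry} is essential --- it tells us that a soliton that splits exactly $k$ factors has a spine which, as the induced flow approaches its singular time (i.e. at small rescaled parabolic scales), collapses onto a $k$-dimensional Euclidean factor, so the ``good'' region at scale $r_i$ genuinely looks $k$-dimensional rather than retaining a higher-dimensional spread. The almost-rigidity statement --- that $\eta$-closeness to a soliton at many scales forces closeness to a soliton splitting one more factor unless the lower-scale behaviour is trapped near a $k$-plane --- is proved by contradiction and compactness, using the monotone quantity to control the dropping of density across scales and a covering/maximal-function argument to pass from ``bad at many scales'' to the covering bound.

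From the covering bound the volume estimate is immediate: since $S^k_{\eta,\tau} \cap B_{g(-\alpha)}(x_0,\beta)$ can be covered by at most $C_\eta\, \tau^{-(k+\eta)/2}$ balls of radius $\sqrt{\tau}$ (taking $r_i \approx \sqrt{\tau}$), and each such ball has $g(-\tau)$-volume at most $\kappa_1 (\sqrt{\tau})^n = \kappa_1 \tau^{n/2}$ by the non-inflating hypothesis (3) --- applied at the scale $r = \sqrt{\tau}$ and time $-\tau$, where the Type I bound (1) supplies the required bound $R \leq c(n)B/(t - \bar t)$ on the relevant parabolic neighbourhood --- we get
\begin{equation*}
\vol_{g(-\tau)}\left(S^k_{\eta,\tau} \cap B_{g(-\alpha)}(x_0,\beta)\right) \leq C_\eta\, \tau^{-\frac{k+\eta}{2}} \cdot \kappa_1 \tau^{\frac{n}{2}} = C'_\eta\, \tau^{\frac{n-k-\eta}{2}},
\end{equation*}
which is the claim after renaming constants. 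The constants $\alpha(B), \beta(B)$ are chosen small enough that the rescaled flows on $B_{g(-\alpha)}(x_0,\beta)$ over the dyadic scales down to $\sqrt{\tau}$ all lie in a compact family where the soliton approximation and the non-collapsing/non-inflating bounds are uniformly valid.

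The main obstacle I expect is the almost-rigidity (quantitative cone-splitting) step, for two reasons specific to Ricci flow. First, unlike the static settings in \cite{CheegerNaber13a, CheegerNaber13b}, the ``cone'' structure here is replaced by self-similar solitons, and splitting a Euclidean factor is a statement about the soliton structure (the potential function and the Ricci flow) rather than about a metric cone; making ``$\eta$-close at two nearby scales with the same center forces an approximate translation symmetry'' precise requires the monotonicity and the soliton equation together, and the compactness used in the contradiction argument must be the appropriate one for pointed Ricci flows with Type I bounds (Hamilton--Cheeger--Gromov-type compactness combined with Naber's/\v{S}e\v{s}um's tangent flow analysis). Second, as emphasized in the introduction, the geometry of $\Sigma_k$ inside $(M,g(t))$ does not match the ``dimension'' coming from the split factors --- the spine can be genuinely $n$-dimensional at fixed time --- so one must use the $g(-\tau)$ metric at the small time $-\tau$ (where Theorem \ref{thm:spine_geometry} forces collapse of the spine) rather than a fixed-time metric, and carefully track which metric is used in the covering versus the volume count. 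Getting the interplay of these two metrics right, and ensuring the covering constant $C_\eta$ genuinely depends only on $n, B, \kappa_0, \kappa_1, \eta$ and not on $\tau$, is the delicate bookkeeping at the heart of the argument.
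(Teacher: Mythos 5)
Your proposal follows essentially the same route as the paper: quantitative differentiation via the monotone reduced volume (only boundedly many ``bad'' scales), an almost-rigidity/almost-splitting step proved by compactness--contradiction with the spine diameter estimate of Theorem \ref{thm:spine_geometry} forcing points with almost-constant reduced volume to line up near a $k$-dimensional exponential image in the $g(-\tau)$ metric, an iterated covering bound of order $\tau^{-(k+\eta)/2}$ balls of radius $\sqrt{\tau}$, and the final volume count from the non-inflating bound at time $-\tau$. What you call the ``covering/maximal-function argument'' is implemented in the paper as the decomposition into the sets $E_{\mathbf a}$ with the $2j^{K}$ count and the inductive Covering Lemma \ref{lem:covering}, but the strategy and the key geometric inputs are the same.
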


Then, in Section \ref{section:c_estimates}, we combine Theorem \ref{thm:vol_estimate} with the $\varepsilon$-regularity Lemmata \ref{lem:epsilon} and \ref{lem:epsilon_weyl}, to prove uniform curvature estimates for any Ricci flow $(M,g(t))_{t\in (-2,0)}$ in $\mathcal{C}(n,B,\kappa_0,\kappa_1)$.

Define the curvature radius of $(M,g(t))_{t\in (-2,0)}$ at $x\in M$ as
\begin{equation*}
r_{\riem}(x)=\sup\left\{r\leq 1, \; |\riem(g)|\leq r^{-2}\;\textrm{in}\; B_{g(-r^2)}(x,r)\times [-r^2,0] \right\}.
\end{equation*}
Note that if $(g(t))_{t\in (-2,0)}$ is singular at $x$, we define $r_{\riem}(x)=0$. 

Then, Theorem \ref{thm:curv_estimates} is a consequence of the following result.
 \begin{theorem}\label{introthm}
 Let $(M,g(t))_{t\in (-2,0)}\in \mathcal C(n,B,\kappa_0,\kappa_1)$. Then there exist $\alpha(B),\beta(B)>0$ such that for any integer $j\geq 0$ and any $p\in (0,2)$  there is $C_{p,j}= C_{p,j}(n,B,\kappa_0,\kappa_1)<+\infty$ such that
 \begin{align}
  \int_{B_{g(-\alpha)}(x,\beta)\cap\{r_{\riem}>0\}} | \nabla^j \riem(g(0)) |_{g(0)}^{\frac{p}{j+2}} d\mu_{g(0)} &\leq C_{p,j},\label{curvature1}
 \end{align}
Moreover, if $\dim M=4$ and $g(t)$ has positive isotropic curvature, then \eqref{curvature1} holds for any $p\in (0,3)$.
\end{theorem}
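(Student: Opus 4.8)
The plan is to derive Theorem \ref{introthm} by combining the volume estimate of Theorem \ref{thm:vol_estimate} with the $\varepsilon$-regularity lemmata in a standard covering argument adapted from \cite{CheegerNaber13a}. The key point is that an $\varepsilon$-regularity statement converts smallness of the quantitative stratification into a lower bound on the curvature radius: if $x\notin S^k_{\eta,\tau}$ for suitable $k$ (for the general case $k=n-2$, the top nontrivial stratum, exploiting that no shrinking soliton splits $n-1$ factors without being flat; for the positive isotropic curvature case in dimension $4$ one uses Lemma \ref{lem:epsilon_weyl} and the finer splitting behavior), then the flow near $x$ at some scale $\bar\tau\in[\tau,1]$ is $\eta$-close to a soliton splitting many Euclidean factors, hence essentially flat at that scale, which forces $r_{\riem}(x)\gtrsim\sqrt{\tau}$. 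Contrapositively, $\{r_{\riem}(x)<\sqrt{\tau}\}\cap B_{g(-\alpha)}(x_0,\beta)\subset S^{n-2}_{\eta,\tau}$ (up to fixing $\eta$ small depending on $\varepsilon$), so Theorem \ref{thm:vol_estimate} gives
\begin{equation*}
\vol_{g(-\tau)}\left(\{r_{\riem}<\sqrt\tau\}\cap B_{g(-\alpha)}(x_0,\beta)\right)\leq C\,\tau^{\frac{2-\eta}{2}}.
\end{equation*}

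Next I would set up a dyadic decomposition of $B_{g(-\alpha)}(x_0,\beta)\cap\{r_{\riem}>0\}$ according to the size of the curvature radius: let $U_i=\{x: 2^{-i-1}<r_{\riem}(x)\leq 2^{-i}\}$. On $U_i$, local smoothing estimates (Shi-type estimates, available because the flow has bounded curvature on balls of the appropriate size by definition of $r_{\riem}$, together with the Type I bound and non-collapsing) give pointwise control $|\nabla^j\riem(g(0))|\lesssim r_{\riem}^{-(j+2)}\lesssim 2^{(i)(j+2)}$. Combining with the volume bound $\vol(U_i)\leq\vol(\{r_{\riem}<2^{-i+1}\})\leq C\,(2^{-2i})^{\frac{2-\eta}{2}}=C\,2^{-i(2-\eta)}$ — and here I need to pass from $\vol_{g(-\tau)}$ with $\tau\sim 2^{-2i}$ to $\vol_{g(0)}$, which is controlled since on the relevant region the curvature is bounded by $B/\tau$ so the metrics are comparable up to a factor depending only on $B$ over the time interval $[-\tau,0]$ — one obtains
\begin{equation*}
\int_{U_i}|\nabla^j\riem(g(0))|^{\frac{p}{j+2}}\,d\mu_{g(0)}\lesssim 2^{ip}\cdot 2^{-i(2-\eta)}=2^{i(p-2+\eta)}.
\end{equation*}
Summing over $i\geq 0$ converges precisely when $p-2+\eta<0$, i.e.\ for any $p<2$ after choosing $\eta<2-p$; the constant $C_{p,j}$ then depends on $n,B,\kappa_0,\kappa_1$ through $C_\eta$ and the smoothing constants. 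In the positive isotropic curvature four-dimensional case, the same scheme with the exponent $\frac{n-k-\eta}{2}=\frac{4-1-\eta}{2}$ from Lemma \ref{lem:epsilon_weyl} replaces $2-\eta$ by $3-\eta$, yielding convergence for all $p<3$.

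The main obstacle I anticipate is not the summation but making rigorous the two bridges between scales: first, that the $\varepsilon$-regularity lemma genuinely licenses the inclusion $\{r_{\riem}<\sqrt\tau\}\subset S^{n-2}_{\eta,\tau}$ with a uniform choice of $\eta=\eta(\varepsilon,n,B)$ — this requires knowing that a shrinking soliton that is sufficiently collapsed at a fixed scale (which is what closeness to a soliton splitting $n-1$ Euclidean factors provides, via the spine diameter estimate of Theorem \ref{thm:spine_geometry}) must in fact be flat to high precision, so that $\varepsilon$-regularity applies; and second, the quantitative comparison of volumes measured with respect to $g(-\tau)$, $g(-r_{\riem}^2)$, and $g(0)$, together with the comparison of the balls $B_{g(-\alpha)}$ appearing in Theorem \ref{thm:vol_estimate} with those one actually wants, all of which must be absorbed into constants depending only on the allowed parameters. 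Once these are in place the argument is the routine Cheeger--Naber summation, and the borderline exponents $2$ (respectively $3$) are exactly the reciprocals of the gap exponent appearing in \eqref{intro:vol_estimate}.
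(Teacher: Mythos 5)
Your proposal follows essentially the same route as the paper: the $\varepsilon$-regularity statement $\{r_{\riem}<\sqrt\tau\}\subset S^{n-2}_{\varepsilon,\tau}$ (resp. $S^{1}_{\varepsilon,\tau}$) combined with Theorem \ref{thm:vol_estimate} gives the volume decay of the sublevel sets of $r_{\riem}$, and Shi's estimates plus a summation over scales (your dyadic decomposition is just the layer-cake integration the paper uses to bound $\int r_{\riem}^{-p}\,d\mu_{g(0)}$) yield \eqref{curvature1} for $p<2$, resp. $p<3$. Two small corrections: the four-dimensional positive isotropic curvature case rests on the second part of Lemma \ref{lem:epsilon} (via the splitting restriction of \cite{LiNiWang16}), not on Lemma \ref{lem:epsilon_weyl}, and the passage from $\vol_{g(-\tau)}$ to $\vol_{g(0)}$ is cleanest via the lower scalar curvature bound as in Remark \ref{rmk:later_est} (your "curvature bounded by $B/\tau$ on $[-\tau,0]$" needs the lower bound on $r_{\riem}$ on each dyadic piece, not the Type I bound alone).
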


Observe that $\mathbb S^2 \times \mathbb R^{n-2}$ with the standard soliton structure satisfies the estimate of Theorem \ref{introthm} for $p=2$, so the theorem is not sharp. Similarly for the soliton $\mathbb S^3\times \mathbb R$, for $p=3$. On the other hand, if \eqref{curvature1} were to hold for $p=2$ in dimension three or $p=3$ in dimension four with positive isotropic curvature, this would imply quite strong control in the geometry of $(M,g(t))_{t\in [0,T)}$ in Theorem \ref{thm:curv_estimates}: by a result of Topping \cite{Topping05} the diameter of $(g(t))_{t\in [0,T)}$ would be uniformly bounded for all $t$; see also Zhang \cite{Zhang14}.

Finally, we note that Theorem \ref{introthm} is a consequence of stronger estimates on the curvature radius proven in Theorem \ref{thm:regularity1}; see also Theorem \ref{thm:integral_estimates}. Furthermore, the estimates of Theorems \ref{thm:curv_estimates} and \ref{introthm} can be strengthened to $p\in (0,n-1)$ under appropriate bounds on the Weyl curvature; see Remarks \ref{weyl1} and \ref{weyl2}. 

{\bf Acknowledgments:} The author would like to acknowledge support from the Fields Institute during the completion of this work, and thank University of Waterloo and Spiro Karigiannis for their hospitality. Moreover, the author is grateful to Robert Haslhofer for his interest in this work and for many discussions on an earlier draft of this paper.

\section{A monotonicity formula for singular Ricci flows.}\label{section:monotonicity}
In this section we describe a monotonicity formula, and its associated density, in the setting of a Ricci flow $(M,g(t))_{t\in (-T,0)}$, $T\in(0,+\infty]$ subject to a Type I curvature bound, namely  
\begin{equation} \label{rfnb}
\sup_{M}|\riem(g(t)|_{g(t)}\leq \frac{B}{|t|},
\end{equation}
for $t\in (-T,0)$, as  introduced in \cite{Gianniotis2017}. Note that we allow for the possibility that $$\lim_{t\rightarrow 0}\sup_M |\riem(g(t))|_{g(t)}=+\infty.$$

Let us introduce some notation we will use throughout the paper. Given a Ricci flow $(M,g(t))_{t\in (-T,0]}$, $T\in (0,+\infty]$, and $x\in M$, let $\mathfrak g$ denote the triplet $(M,g(t),x)_{t\in(-T,0) }$. When we want to distinguish between different pointed Ricci flows with the same underlying flow we will also use the notation $\mathfrak g_x$ to denote  $(M,g(t),x)_{t\in(-T,0)}$.

Moreover, for every $s>0$ we will denote the rescaled flow, pointed at $x$, by
$$(\mathfrak g_x)_s=(M,s^{-2}g(s^2t), x)_{t\in (-T,0)}.$$

\subsection{Perelman's reduced volume.} Let $(M,g(t))_{t\in [0,T]}$ be a complete smooth Ricci flow and let  $l_{(x,T)}$ denote the reduced distance function based at $(x,T)\in M\times (0,T]$, as introduced by Perelman in \cite{Perelman02}:
\begin{equation*}
l_{(x,T)}(y,\tau)=\inf\left\{\frac{1}{2\sqrt \tau} \int_0^{\tau} \sqrt{\bar\tau} \left( R(\gamma(\bar\tau), T-\bar\tau) +\left| \frac{d}{d\bar\tau} \gamma(\bar\tau)\right|_{g(T-\bar\tau)}^2  \right) d\bar\tau\right\},
\end{equation*}
where the infimum is taken over all curves $\gamma: [0, \tau] \rightarrow M$ with $\gamma(0)=x$ and $\gamma(\tau)=y$.

Then, as in \cite{Perelman02}, we may define the reduced volume at scale $\tau>0$  based at $(x,T)$:
\begin{equation}\label{eqn:red_vol}
\mathcal V_{(x,T)}(\tau)=\int_M \frac{e^{-l(y,\tau)}}{(4\pi \tau)^{n/2}} d\mu_{g(\tau)}(y).
\end{equation}
Perelman  discovered the remarkable fact that $\mathcal V_{(x,T)}(\tau)$ is monotone decreasing in $\tau$. Moreover, $\lim_{\tau\rightarrow 0} \mathcal V_{(x,T)}(\tau) =1$ and $\mathcal V_{(x,T)}(\tau)$ is constant if and only if $g(t)$ is the Euclidean space for every $t$.

With the notation introduced above, if $\mathfrak g=(M, g(t),x)_{t\in(-T,0)}$ we define
$$l_{\mathfrak g}(y, \tau)=l_{(x,0)}(y,\tau).$$

\subsection{The space of uniformly Type I flows.} Let $\mathcal{RF}(n,B)$ denote the collection of all complete pointed Ricci flows $(M,g(t),x)_{t\in (-T,0)}$, where $M$ is $n$-dimensional, $T\in (0,+\infty]$, and $g(t)$ satisfies \eqref{rfnb} for all $t\in (-T,0)$. 

Moreover, let $\mathcal{RF}_{reg}(n,B)$ be the collection of $(M,g(t),x)\in \mathcal{RF}(n,B) $ satisfying 
$$\sup_{M\times (-T,0)}|\riem(g(t))|_{g(t)}<+\infty.$$
Observe that any flow in $\mathfrak g=(M,g(t),x)_{t\in (-T,0)}\in\mathcal{RF}_{reg}(n,B)$ can be extended to a Ricci flow $(g(t))_{t\in (-T,0]}$ by Shi's estimates.

We endow $\mathcal{RF}(n,B)$ with the topology of smooth Cheeger--Gromov convergence of Ricci flows, uniform in compact subsets of $M\times (-\infty,0)$. 

Let $T_i\nearrow 0$. Since any $(M,g(t),x)_{t\in (-T,0)}$ is the limit of the sequence $(M,g(t+T_i),x)_{t\in (-T-T_i,0]}$, which satisfies \eqref{rfnb},  it follows that $\mathcal{RF}(n,B)=\overline{\mathcal{RF}_{reg}(n,B) }$. 

It is a consequence of estimates of Naber in \cite{Naber10}, as well as the work of Enders \cite{Enders08}, that given a sequence $\{\mathfrak g_i\}_{i}$ and $\mathfrak g$ in   $\mathcal{RF}(n,B) $ such that $\mathfrak g_i \rightarrow \mathfrak g$, the corresponding sequence $l_{\mathfrak g_i}$ converges, up to subsequence, to a limit $l$ in $C^{0,\alpha}_{loc}$. 

Thus we are led to the following definition:

\begin{definition}[Singular reduced distance]\label{def:sin_red_dis}
A function $l:M\times (0,T)\rightarrow \mathbb R$ is a singular reduced distance on $\mathfrak g=(M,g(t),x)_{t\in (-T,0)}\in \mathcal{RF}(n,B)$ if there is a sequence $\mathfrak g_i\in \mathcal{RF}_{reg}(n,B)$ such that $\mathfrak g_i \rightarrow \mathfrak g$ and $l_{\mathfrak g_i}\rightarrow l$ in $C^{0,\alpha}_{loc}$.
\end{definition}
\begin{remark}\label{sin_red_dis_comp}
The estimates in \cite{Naber10} also imply that the collection of the singular reduced distances of a fixed $\mathfrak g\in \mathcal{RF}(n,B)$ is compact in the $C^{0,\alpha}_{loc}$ topology. 
\end{remark}

\subsection{Reduced volume in the singular setting.} Following Definition \ref{def:sin_red_dis} and \eqref{eqn:red_vol} we may define 
\begin{equation}\label{eqn:sin_red_vol_1}
\mathcal V_{\mathfrak g,l}(\tau)=\int_M \frac{e^{-l(y,\tau)}}{(4\pi \tau)^{n/2}} d\mu_{g(-\tau)}(y),
\end{equation}
where $\mathfrak g=(M,g(t),x)_{t\in (-T,0)}\in \mathcal{RF}(n,B)$ and $l$ is a singular reduced distance on $\mathfrak g$.

The curvature bound \eqref{rfnb} and the quadratic growth of a singular reduced distance $l$, again due to \cite{Naber10}, imply that the map $l \mapsto \mathcal V_{\mathfrak g,l}(\tau)$ is continuous, for every $\tau$. Hence, by Remark \ref{sin_red_dis_comp} we may define the singular reduced volume of $\mathfrak g\in \mathcal{RF}(n,B)$ at scale $\tau$ as
\begin{equation}
\mathcal V_{\mathfrak g}(\tau) = \min\{  \mathcal V_{\mathfrak g,l}(\tau), \textrm{$l$ singular reduced distance on $\mathfrak g$}\}.
\end{equation}
\begin{remark}\label{rmk:rv_B}
Note that $\mathcal{RF}(n,B)\subset \mathcal{RF}(n,B')$ for every $B'\geq B$. Thus, the reduced volume $\mathcal V_{\mathfrak g}(\tau)$ may depend on the choice of the constant $B<+\infty$; a larger constant leads to a larger number of competitors in the minimization procedure used to define $\mathcal V_{\mathfrak g}(\tau)$. Nevertheless, we see below that this definition has all the necessary properties we need in our analysis.
\end{remark}

 Before we describe some properties of the reduced volume, recall that a gradient shrinking Ricci soliton is a triplet $(N,g,f)$ where $(N,g)$ is a complete Riemannian manifold and $f\in C^\infty(N)$ satisfies
$$\ric(g)+\hess_g f = \frac{g}{2}.$$
It is a standard fact about gradient shrinking Ricci solitons that there is a constant $c$ such that
$$R+|\nabla f|^2-f=c.$$
We call $(N,g,f)$ a \textit{normalized Ricci soliton} and $f$ a \textit{normalized soliton function} if $c=0$.

Moreover, we will say that $(N,h(t))_{t\in (-\infty,0)}$ is induced by a gradient shrinking Ricci soliton if there exists a normalized soliton function $f\in C^\infty(N)$ such that $(N,h(-1),f)$ is a gradient shrinking Ricci soliton, and the vector field $\nabla f$ is complete.

\begin{lemma}[Proposition 3.1 in \cite{Gianniotis2017}]\label{lem:sin_red_vol_prop}
Given any  $\mathfrak g\in\rf$ the reduced volume $\mathcal V_{\mathfrak g}(\tau)$ has the following properties:
\begin{enumerate}
\item $\mathcal V_{\mathfrak g}(\tau)$ is monotonically decreasing in $\tau$.
\item If $\mathcal V_{\mathfrak g}(\tau_1)=\mathcal V_{\mathfrak g}(\tau_2)$ for some $0<\tau_1<\tau_2$, then for every $\tau$
$$\mathcal V_{\mathfrak g}(\tau)=\mathcal V_{\mathfrak g,l}(\tau)$$
for some singular reduced distance $l$ of $\mathfrak g$. Moreover, $\mathfrak g$ is induced from a shrinking Ricci soliton and $l(\cdot,-1)$ is a normalized soliton function.
\item If there is a sequence $\mathfrak g_i\in\rf$ such that $\mathfrak g_i\rightarrow\mathfrak g$ then
$$\liminf_i \mathcal V_{\mathfrak g_i}(\tau) \geq \mathcal V(\tau),$$
for every $\tau$.
\end{enumerate}
\end{lemma}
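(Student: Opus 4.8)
The plan is to derive all three properties by passing to limits in Perelman's monotonicity on the smooth approximating flows. The three ingredients are: Perelman's monotonicity of the reduced volume for flows in $\mathcal{RF}_{reg}(n,B)$ (which extend smoothly to $t=0$ by Shi's estimates); the $C^{0,\alpha}_{loc}$-compactness of the family of singular reduced distances of a fixed $\mathfrak g$ (Remark \ref{sin_red_dis_comp}); and the convergence $\mathcal V_{\mathfrak g_i,l_i}(\tau)\to\mathcal V_{\mathfrak g,l}(\tau)$ whenever $\mathfrak g_i\to\mathfrak g$ in $\rf$ and $l_i\to l$ in $C^{0,\alpha}_{loc}$, which follows from \eqref{rfnb} and the uniform quadratic growth of singular reduced distances (the case $\mathfrak g_i=\mathfrak g$ being the continuity of $l\mapsto\mathcal V_{\mathfrak g,l}(\tau)$ noted above).

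\emph{Proof of (1).} Fix $0<\tau_1<\tau_2$ and, using Remark \ref{sin_red_dis_comp} and the continuity in $l$, choose a singular reduced distance $l$ on $\mathfrak g$ realizing the minimum at the \emph{smaller} scale: $\mathcal V_{\mathfrak g,l}(\tau_1)=\mathcal V_{\mathfrak g}(\tau_1)$. Pick $\mathfrak g_i\in\mathcal{RF}_{reg}(n,B)$ with $\mathfrak g_i\to\mathfrak g$ and $l_{\mathfrak g_i}\to l$. Perelman's monotonicity on each $\mathfrak g_i$ gives $\mathcal V_{\mathfrak g_i,l_{\mathfrak g_i}}(\tau_1)\geq\mathcal V_{\mathfrak g_i,l_{\mathfrak g_i}}(\tau)\geq\mathcal V_{\mathfrak g_i,l_{\mathfrak g_i}}(\tau_2)$ for $\tau\in[\tau_1,\tau_2]$; letting $i\to\infty$ yields $\mathcal V_{\mathfrak g}(\tau_1)=\mathcal V_{\mathfrak g,l}(\tau_1)\geq\mathcal V_{\mathfrak g,l}(\tau)\geq\mathcal V_{\mathfrak g,l}(\tau_2)\geq\mathcal V_{\mathfrak g}(\tau_2)$, which is (1) and also records the chain in between.

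\emph{Proof of (2).} If $\mathcal V_{\mathfrak g}(\tau_1)=\mathcal V_{\mathfrak g}(\tau_2)$ then the chain above collapses, so $\mathcal V_{\mathfrak g,l}=\mathcal V_{\mathfrak g}$ is constant on $[\tau_1,\tau_2]$. I would pass to the limit the differential/pointwise form of Perelman's monotonicity (the conjugate heat operator applied to $(4\pi\tau)^{-n/2}e^{-l_{\mathfrak g_i}}$ is nonpositive, with vanishing error term exactly on flows induced by shrinking solitons): the constancy of the limit forces the limiting error to vanish, giving the soliton identity $\ric(g(-\tau))+\hess_{g(-\tau)}l(\cdot,\tau)=\tfrac{1}{2\tau}g(-\tau)$ weakly on $[\tau_1,\tau_2]$. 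After rescaling $\mathfrak g\mapsto(\mathfrak g_x)_s$, which leaves $\rf$ and the reduced volume invariant, assume $1\in[\tau_1,\tau_2]$; elliptic regularity makes $l(\cdot,1)$ smooth, so $(M,g(-1),l(\cdot,1))$ is a gradient shrinking Ricci soliton with $l(\cdot,1)$ a normalized soliton function, and $\nabla l(\cdot,1)$ is complete because $\mathfrak g$ is a complete Ricci flow realizing the associated self-similar evolution. Self-similarity then gives $\mathcal V_{\mathfrak g}(s^2\tau)=\mathcal V_{(\mathfrak g_x)_s}(\tau)=\mathcal V_{\mathfrak g}(\tau)$ for all $s>0$, so $\mathcal V_{\mathfrak g}$ is constant; $\mathcal V_{\mathfrak g,l}$, as the reduced volume of a soliton, is constant as well, and the two agree for all $\tau$ since they agree at $\tau_1$. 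I expect this step to be the main obstacle: since $l$ is a priori only $C^{0,\alpha}$, extracting the pointwise soliton equation from the integrated constancy, bootstrapping regularity, and identifying the complete soliton vector field all need care, whereas (1) and (3) are soft limiting arguments.

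\emph{Proof of (3).} Let $\mathfrak g_i\to\mathfrak g$ in $\rf$ and pass to a subsequence attaining $\liminf_i\mathcal V_{\mathfrak g_i}(\tau)$. For each $i$, take a singular reduced distance $l_i$ on $\mathfrak g_i$ with $\mathcal V_{\mathfrak g_i,l_i}(\tau)=\mathcal V_{\mathfrak g_i}(\tau)$ and, by Definition \ref{def:sin_red_dis}, a flow $\mathfrak h_i\in\mathcal{RF}_{reg}(n,B)$ that is $2^{-i}$-close to $\mathfrak g_i$ with $l_{\mathfrak h_i}$ that close to $l_i$ and $|\mathcal V_{\mathfrak h_i,l_{\mathfrak h_i}}(\tau)-\mathcal V_{\mathfrak g_i}(\tau)|<2^{-i}$; then $\mathfrak h_i\to\mathfrak g$. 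By Remark \ref{sin_red_dis_comp} applied along $\mathfrak h_i\to\mathfrak g$, a further subsequence has $l_{\mathfrak h_i}\to l$ for some singular reduced distance $l$ on $\mathfrak g$, whence $\liminf_i\mathcal V_{\mathfrak g_i}(\tau)=\lim_i\mathcal V_{\mathfrak h_i,l_{\mathfrak h_i}}(\tau)=\mathcal V_{\mathfrak g,l}(\tau)\geq\mathcal V_{\mathfrak g}(\tau)$.
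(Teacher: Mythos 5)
First, note that the paper does not prove this lemma at all: it is imported verbatim as Proposition 3.1 of \cite{Gianniotis2017}, so there is no in-paper argument to compare against. Judged on its own terms, your treatment of (1) and (3) is sound and is the expected argument: realize the minimum at the smaller scale, pass Perelman's monotonicity for the regular approximators to the limit using the Type I bound and the quadratic growth of $l$, and for (3) run a diagonal selection through $\mathcal{RF}_{reg}(n,B)$ together with the compactness of Remark \ref{sin_red_dis_comp}.

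Part (2), however, has a genuine gap, and it sits exactly where you wave your hands. The equality-case analysis only yields the soliton identity $\ric(g(-\tau))+\hess\, l(\cdot,\tau)=\tfrac{1}{2\tau}g(-\tau)$ on the time slab corresponding to $[\tau_1,\tau_2]$. Your phrase ``$\nabla l(\cdot,1)$ is complete because $\mathfrak g$ is a complete Ricci flow realizing the associated self-similar evolution'' assumes precisely what must be proven: a priori $\mathfrak g$ coincides with the self-similar flow generated by $(M,g(-1),l(\cdot,1))$ only on $[-\tau_2,-\tau_1]$, and extending the soliton structure to the whole time interval requires the forward uniqueness of Chen--Zhu \cite{ChenZhu06} and the backward uniqueness of Kotschwar \cite{Kotschwar10} for complete flows with bounded curvature (applicable on compact time subintervals thanks to the Type I bound). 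The paper flags this explicitly in the remark following Lemma \ref{lem:quant_rigidity}, where these uniqueness theorems are said to be used ``in an essential way'' exactly for part (2) of Lemma \ref{lem:sin_red_vol_prop}. A second, related flaw is your deduction of constancy of $\mathcal V_{\mathfrak g}$ from self-similarity: the rescaled pointed flow $(\mathfrak g_x)_s$ is isometric to the \emph{same} flow pointed at $\phi_s(x)$, where $\phi_s$ are the soliton diffeomorphisms, so what scale invariance gives is $\mathcal V_{\mathfrak g_x}(s^2\tau)=\mathcal V_{\mathfrak g_{\phi_s(x)}}(\tau)$, which need not equal $\mathcal V_{\mathfrak g_x}(\tau)$ unless $x$ is fixed by the $\phi_s$ (e.g.\ a critical point of the soliton function). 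Consequently the passage from equality on $[\tau_1,\tau_2]$ to $\mathcal V_{\mathfrak g}(\tau)=\mathcal V_{\mathfrak g,l}(\tau)$ for \emph{all} $\tau$ is not established by your argument; that step is the content of Lemma \ref{lem:soliton_density}(3) and needs its own proof (via the asymptotic reduced volume and lower semicontinuity, or an explicit comparison of competitors), not an appeal to naive scale invariance of the pointed flow.
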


\subsection{The density function.}  Using the monotonicity assertion from Lemma \ref{lem:sin_red_vol_prop} we can define the density of $\mathfrak g\in \mathcal{RF}(n,B)$ as
\begin{equation}
\Theta_{\mathfrak g}:= \lim_{\tau\rightarrow 0}\mathcal V_{\mathfrak g}(\tau).
\end{equation}

Moreover, again from Lemma \ref{lem:sin_red_vol_prop}, it follows that if $\mathfrak g_i\rightarrow \mathfrak g$, where $\mathfrak g_i, \mathfrak g\in \rf$, then 
\begin{equation}\label{eqn:density_semicont}
\liminf_i \Theta_{\mathfrak g_i}\geq \Theta_{\mathfrak g}.
\end{equation}

Given a Ricci flow $(M,g(t))_{t\in (-T,0)}$ satisfying \eqref{rfnb}, we now define the \textit{density of $g(t)$ at $x\in M$} as
$$\Theta_g(x)= \Theta_{\mathfrak g_x}.$$

\subsection{Reduced volume and density of shrinking Ricci solitons.} Although the definition of the reduced volume involves minimization over all approximating Ricci flows, which makes it hard to compute, we see below that we can still say enough in the case of shrinking Ricci solitons. This is essentially due to the lower semicontinuity and scaling properties of the reduced volume.

\begin{lemma}[Lemma 3.1 in \cite{Gianniotis2017}]\label{lem:soliton_density}
Let $\mathfrak g=(M,g(t),x)_{t\in(-\infty,0)}\in\rf$ induced by a normalized shrinking Ricci soliton $(M,g(-1),f)$.
\begin{enumerate}
\item $\lim_{\tau \rightarrow \infty} \mathcal V_{\mathfrak g}(\tau)  = \lim_{\tau \rightarrow \infty} \mathcal V_{\mathfrak g,l}(\tau) = \int_M (4\pi)^{-\frac{n}{2}} e^{-f}d\mu_{g(-1)}$, for any singular reduced distance $l$ of $\mathfrak g$.
\item If $x$ is a critical point of $f$, then 
$$\Theta_g (x)=\int_M (4\pi)^{-\frac{n}{2}} e^{-f}d\mu_{g(-1)} \leq \Theta_g (y),$$ for any $y\in M$.
\item If a singular reduced distance $l$ of $\mathfrak g$ is a soliton function then
$$\mathcal V_{\mathfrak g}(\tau)=\mathcal V_{g,l}(\tau),$$
for every $\tau$.
\end{enumerate}
\end{lemma}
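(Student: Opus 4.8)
The plan is to exploit the self-similarity of a soliton-induced flow together with the scale invariance, lower semicontinuity and rigidity of the reduced volume collected in Lemma \ref{lem:sin_red_vol_prop}. Write the induced flow as $g(t)=|t|\,\phi_t^{*}g(-1)$ for $t<0$, with $\phi_{-1}=\mathrm{id}$; after the time change $u=-\log|t|$ the family $(\phi_t)$ becomes the complete gradient flow $(\rho_u)_{u\in\R}$ of $f$ on $(M,g(-1))$, so $\phi_t=\rho_{-\log|t|}$ and hence $\phi_{s^{2}t}=\rho_{-2\log s}\circ\phi_t$ for every $s>0$. A direct computation then shows that, for each $t$, $\rho_{-2\log s}$ is an isometry from $(M,s^{-2}g(s^{2}t))$ to $(M,g(t))$, so the rescaled pointed flow $(\mathfrak g_x)_s$ is isometric, as a pointed Ricci flow, to $\mathfrak g_{\rho_{-2\log s}(x)}$. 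Since rescaling acts bijectively on singular reduced distances and $\mathcal V_{\cdot,l}$ transforms covariantly under it, this yields, for every $\tau$, the identities $\mathcal V_{\mathfrak g_x}(s^{2}\tau)=\mathcal V_{\mathfrak g_{\rho_{-2\log s}(x)}}(\tau)$ and $\mathcal V_{\mathfrak g_x,l}(s^{2}\tau)=\mathcal V_{\mathfrak g_{\rho_{-2\log s}(x)},\hat l_s}(\tau)$, where $\hat l_s$ denotes the image of $l$.

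Next I would blow down. As $s\to\infty$ the point $\rho_{-2\log s}(x)$ evolves by the descending gradient flow of $f$, which (using that $f$ is proper with compact sublevel sets, by Cao--Zhou, in the noncompact case) subconverges along some $s_k\to\infty$ to a critical point $x_\infty$ of $f$; thus $\mathfrak g_{\rho_{-2\log s_k}(x)}\to\mathfrak g_{x_\infty}$ in $\rf$. Passing to the limit in the first scaling identity with $\tau$ fixed, the left side converges to $A:=\lim_{\sigma\to\infty}\mathcal V_{\mathfrak g_x}(\sigma)$, which exists by monotonicity (Lemma \ref{lem:sin_red_vol_prop}(1)), while the right side converges to $\mathcal V_{\mathfrak g_{x_\infty}}(\tau)$: lower semicontinuity is Lemma \ref{lem:sin_red_vol_prop}(3), and the matching upper bound follows by diagonalizing the regular approximating sequences and transporting a minimizing singular reduced distance of $\mathfrak g_{x_\infty}$. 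Hence $A=\mathcal V_{\mathfrak g_{x_\infty}}(\tau)$ for all $\tau$; in particular $\mathcal V_{\mathfrak g_{x_\infty}}$ is constant and equals $\Theta_g(x_\infty)$.

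To identify the constant, apply the rigidity Lemma \ref{lem:sin_red_vol_prop}(2): since $\mathcal V_{\mathfrak g_{x_\infty}}$ is constant there is a singular reduced distance $l^{*}$ with $\mathcal V_{\mathfrak g_{x_\infty}}(\tau)=\mathcal V_{\mathfrak g_{x_\infty},l^{*}}(\tau)$ and $f^{*}:=l^{*}(\cdot,-1)$ a normalized soliton function. Evolving $l^{*}$ along $\phi_t$ and changing variables gives $\mathcal V_{\mathfrak g_{x_\infty},l^{*}}(\tau)=\int_M(4\pi)^{-n/2}e^{-f^{*}}\,d\mu_{g(-1)}$, independently of $\tau$; and because any two normalized soliton functions on $(M,g(-1))$ differ by a function with parallel gradient, this integral equals $W:=\int_M(4\pi)^{-n/2}e^{-f}\,d\mu_{g(-1)}$. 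Therefore $A=W$, which is the first equality in (1) and, taking $x=x_\infty$, the identity in (2); the inequality in (2) follows since $\Theta_g(y)\geq\lim_{\tau\to\infty}\mathcal V_{\mathfrak g_y}(\tau)=W=\Theta_g(x)$ for any $y$ and any critical $x$, by monotonicity. For a general singular reduced distance $l$, the function $\mathcal V_{\mathfrak g,l}$ is monotone decreasing (being a limit of Perelman reduced volumes of the regular approximations) and bounded below by $\mathcal V_{\mathfrak g}\geq W$, so its limit $L_l$ at $\infty$ satisfies $L_l\geq W$; the reverse inequality follows by sending $\hat l_{s_k}$ to a limit $\hat l_\infty$ on $\mathfrak g_{x_\infty}$ via the uniform compactness of singular reduced distances, observing that $\mathcal V_{\mathfrak g_{x_\infty},\hat l_\infty}\equiv L_l$ is then constant, and invoking the Perelman-type rigidity to conclude $\hat l_\infty(\cdot,-1)$ is a soliton function, so $L_l=W$. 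Finally, if $l$ is itself a soliton function then $\mathcal V_{\mathfrak g,l}\equiv W$ by the computation above, and the chain $W=\lim_{\tau\to\infty}\mathcal V_{\mathfrak g}(\tau)\leq\mathcal V_{\mathfrak g}(\tau)\leq\mathcal V_{\mathfrak g,l}(\tau)=W$ forces $\mathcal V_{\mathfrak g}(\tau)=\mathcal V_{\mathfrak g,l}(\tau)$, which is (3).

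The main difficulty I anticipate is the interchange of the limit $\tau\to\infty$ with the blow-down limit: one must control the convergence of the rescaled flows to the soliton based at a \emph{critical} point of $f$, and the convergence of the accompanying singular reduced distances to a genuine \emph{soliton function} rather than to some other competitor in the minimization. This is exactly where the uniform estimates of Naber on the reduced distance, the compactness of the set of singular reduced distances, and the Perelman-type rigidity underlying Lemma \ref{lem:sin_red_vol_prop}(2) have to be used carefully; the remaining steps are formal manipulations with the scaling identities and monotonicity.
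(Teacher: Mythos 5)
Your overall strategy is the natural one, and a good part of it works: the self-similarity identity $\mathcal V_{\mathfrak g_x}(s^2\tau)=\mathcal V_{\mathfrak g_{\rho_{-2\log s}(x)}}(\tau)$ is correct, the blow-down of the basepoint along the descending gradient flow of $f$ to a critical point is fine (properness of $f$ plus finiteness of $\int|\nabla f|^2$ along the flow), and at a critical point the identity immediately gives constancy of $\mathcal V$ in $\tau$, so Lemma \ref{lem:sin_red_vol_prop}(2) plus the fact that all normalized soliton potentials give the same Gaussian density identifies the constant as $W=\int_M(4\pi)^{-n/2}e^{-f}d\mu_{g(-1)}$. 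Together with lower semicontinuity (Lemma \ref{lem:sin_red_vol_prop}(3)) applied along $q_k\to x_\infty$, this yields part (2) and the one-sided bound $\lim_{\tau\to\infty}\mathcal V_{\mathfrak g}(\tau)\geq W$, and your chain for part (3) then closes correctly. (Note the paper itself offers no proof to compare with: the lemma is quoted from \cite{Gianniotis2017}, so you are reconstructing that reference's argument.)

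The genuine gap is in the two steps you use to get the matching \emph{upper} bounds, i.e.\ $\lim_{\tau\to\infty}\mathcal V_{\mathfrak g}(\tau)\leq W$ and $\lim_{\tau\to\infty}\mathcal V_{\mathfrak g,l}(\tau)=W$ for an \emph{arbitrary} singular reduced distance $l$. First, ``transporting a minimizing singular reduced distance of $\mathfrak g_{x_\infty}$'' to the flows based at $q_k$ is not legitimate as stated: a singular reduced distance is anchored to its basepoint (it is a limit of reduced distances based at the approximating basepoints), so a minimizer for $\mathfrak g_{x_\infty}$ is not an admissible competitor for $\mathfrak g_{q_k}$; only lower semicontinuity is available, and upper semicontinuity in the basepoint is exactly what would need proof (e.g.\ via a quantitative comparison of reduced distances based at nearby points under the Type I bound). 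Second, and more seriously, the ``Perelman-type rigidity'' you invoke for the fixed competitor $\hat l_\infty$ --- that constancy of $\tau\mapsto\mathcal V_{\mathfrak g_{x_\infty},\hat l_\infty}(\tau)$ forces $\hat l_\infty(\cdot,1)$ to be a normalized soliton function --- is not among the results quoted in this paper: Lemma \ref{lem:sin_red_vol_prop}(2) concerns only the minimized quantity $\mathcal V_{\mathfrak g}$, and knowing $\mathcal V_{\mathfrak g_{x_\infty}}\equiv W\leq L_l$ gives no upper control on $L_l$. Proving that rigidity (equivalently, that along the blow-down every singular reduced distance on a soliton background converges to a soliton potential) requires passing Perelman's differential inequality $\square^*\bigl((4\pi\tau)^{-n/2}e^{-l}\bigr)\leq 0$ and its equality analysis to the singular limit, in the spirit of Cao--Hamilton--Ilmanen, Enders and Naber; this is the analytic core of part (1) and is precisely what your proposal defers to a named but unavailable lemma. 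Until one of these two routes is actually carried out, the first equality in (1), the statement of (1) for general $l$, and hence the inequality in (2) as you derive it (you use $\lim_{\tau\to\infty}\mathcal V_{\mathfrak g_y}(\tau)=W$ at arbitrary $y$; this can be repaired by using only the lower bound $\Theta_g(y)\geq W$ obtained from the blow-down at $y$) remain unproved.
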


\subsection{Tangent flows and density.} Let $\mathfrak h \in \mathcal{RF}(n,B)$ be a tangent flow of $\mathfrak g\in \mathcal{RF}(n,B)$, namely the limit of a sequence $(\mathfrak g)_{s_i}$, for $s_i\searrow 0$. By \cite{Naber10}, $\mathfrak h$ is induced by a gradient shrinking Ricci soliton. The following theorem is proven in \cite{Gianniotis2017}:

\begin{theorem}[Theorem 5.1 in \cite{Gianniotis2017}]\label{thm:density_uniqueness}
Let $(N,h(-1),f)$ be the shrinking Ricci soliton associated to $\mathfrak h$, with $f$ being a normalized soliton function. Then 
\begin{equation}
\Theta_{\mathfrak g}=\Theta_{\mathfrak h}=\int_N  (4\pi)^{-\frac{n}{2}} e^{-f}d\mu_{h(-1)}.
\end{equation}
\end{theorem}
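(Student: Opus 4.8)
The plan is to establish the chain of equalities by exploiting the scaling invariance of the density together with the soliton structure of the tangent flow $\mathfrak h$. First I would observe that, since $\mathfrak h$ is obtained as a Cheeger--Gromov limit $\mathfrak h = \lim_i (\mathfrak g)_{s_i}$ with $s_i \searrow 0$, and since the density is scale-invariant (the reduced volume satisfies $\mathcal V_{(\mathfrak g)_s}(\tau) = \mathcal V_{\mathfrak g}(s^2\tau)$, so $\Theta_{(\mathfrak g)_s} = \Theta_{\mathfrak g}$ for all $s>0$), the lower semicontinuity estimate \eqref{eqn:density_semicont} gives
\[
\Theta_{\mathfrak h} \leq \liminf_i \Theta_{(\mathfrak g)_{s_i}} = \Theta_{\mathfrak g}.
\]
The work is therefore to prove the reverse inequality $\Theta_{\mathfrak h} \geq \Theta_{\mathfrak g}$ and to identify $\Theta_{\mathfrak h}$ with the Gaussian weighted volume $\int_N (4\pi)^{-n/2} e^{-f} d\mu_{h(-1)}$.

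For the identification of $\Theta_{\mathfrak h}$: since $\mathfrak h$ is induced by the normalized shrinking soliton $(N,h(-1),f)$ with $f$ a normalized soliton function, part (2) of Lemma \ref{lem:soliton_density} applies at a critical point of $f$, and part (1) shows that the large-$\tau$ limit of $\mathcal V_{\mathfrak h}$ equals $\int_N (4\pi)^{-n/2} e^{-f} d\mu_{h(-1)}$. By the monotonicity in part (1) of Lemma \ref{lem:sin_red_vol_prop}, $\mathcal V_{\mathfrak h}(\tau)$ decreases from $\Theta_{\mathfrak h}$ (as $\tau \to 0$) down to this Gaussian integral (as $\tau \to \infty$); but since $\mathfrak h$ is self-similar, rescaling $\mathfrak h$ by $s$ moves the scale but not the flow, forcing $\mathcal V_{\mathfrak h}$ to be constant in $\tau$ — alternatively, one invokes part (3) of Lemma \ref{lem:soliton_density} with $l$ a soliton function to conclude $\mathcal V_{\mathfrak h}(\tau) = \mathcal V_{h,l}(\tau)$ is literally the constant $\int_N (4\pi)^{-n/2} e^{-f} d\mu_{h(-1)}$. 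Hence $\Theta_{\mathfrak h} = \int_N (4\pi)^{-n/2} e^{-f} d\mu_{h(-1)}$.

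It remains to prove $\Theta_{\mathfrak g} \leq \Theta_{\mathfrak h}$, which is the heart of the matter. Here I would use the monotonicity and the convergence of reduced distances more carefully. Fix $\tau_0 > 0$ and a singular reduced distance $l$ on $\mathfrak g$ achieving $\mathcal V_{\mathfrak g}(\tau_0) = \mathcal V_{\mathfrak g, l}(\tau_0)$. For each $s_i$, the rescaled functions $l^{(i)}(y,\tau) := l(y, s_i^2 \tau)$ are (up to the scaling normalization) singular reduced distances on $(\mathfrak g)_{s_i}$, and by the compactness in Remark \ref{sin_red_dis_comp} together with the uniform quadratic growth bounds of Naber \cite{Naber10}, they subconverge in $C^{0,\alpha}_{loc}$ to a singular reduced distance $\bar l$ on $\mathfrak h$. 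Using the continuity of $l \mapsto \mathcal V_{\cdot, l}(\tau)$ established in the text (again from the quadratic growth and the curvature bound \eqref{rfnb}), one passes to the limit:
\[
\mathcal V_{\mathfrak h, \bar l}(\tau) = \lim_i \mathcal V_{(\mathfrak g)_{s_i}, l^{(i)}}(\tau) = \lim_i \mathcal V_{\mathfrak g, l}(s_i^2 \tau).
\]
As $i \to \infty$ the right-hand side tends to $\Theta_{\mathfrak g}$ for every fixed $\tau$, so $\mathcal V_{\mathfrak h, \bar l}(\tau) \equiv \Theta_{\mathfrak g}$; in particular $\Theta_{\mathfrak h} = \lim_{\tau \to 0}\mathcal V_{\mathfrak h}(\tau) \leq \lim_{\tau\to 0}\mathcal V_{\mathfrak h, \bar l}(\tau) = \Theta_{\mathfrak g}$. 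Combined with the first paragraph this gives all the equalities.

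The main obstacle I anticipate is making the limiting argument in the last paragraph rigorous: one must ensure that the rescaled reduced distances $l^{(i)}$ really do converge to an object that qualifies as a \emph{singular} reduced distance on $\mathfrak h$ (i.e. arises from approximation by $\mathcal{RF}_{reg}$ flows, not merely as an abstract limit), and that the passage to the limit in the integral defining $\mathcal V$ is justified despite the non-compactness of $M$ — this is exactly where the uniform quadratic lower bound on $l$ from \cite{Naber10}, giving a uniform Gaussian tail, is essential. A secondary subtlety is that the minimizing $l$ for $\mathcal V_{\mathfrak g}(\tau_0)$ may depend on $\tau_0$; one circumvents this either by a diagonal argument over a sequence $\tau_0 \to 0$ or, more cleanly, by noting that once equality of $\mathcal V_{\mathfrak g}$ at two scales forces the soliton structure (part (2) of Lemma \ref{lem:sin_red_vol_prop}), the relevant $l$ can be taken independent of scale.
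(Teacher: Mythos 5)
This theorem is not proved in the present paper at all: it is imported verbatim from \cite{Gianniotis2017} (Theorem 5.1 there), so your proposal can only be measured against what a correct argument requires. Your easy steps are fine: scale invariance of the reduced volume plus \eqref{eqn:density_semicont} gives $\Theta_{\mathfrak h}\leq\Theta_{\mathfrak g}$, and monotonicity together with part (1) of Lemma \ref{lem:soliton_density} gives $\Theta_{\mathfrak h}\geq\int_N(4\pi)^{-n/2}e^{-f}d\mu_{h(-1)}$. But the proof never closes, for two concrete reasons. First, the ``heart of the matter'' paragraph ends with $\Theta_{\mathfrak h}=\lim_{\tau\to0}\mathcal V_{\mathfrak h}(\tau)\leq\lim_{\tau\to0}\mathcal V_{\mathfrak h,\bar l}(\tau)=\Theta_{\mathfrak g}$, which is the same inequality you already had from semicontinuity; the genuinely hard inequality $\Theta_{\mathfrak g}\leq\int_N(4\pi)^{-n/2}e^{-f}d\mu_{h(-1)}$ is never established. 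Second, the intermediate claim $\lim_i\mathcal V_{\mathfrak g,l}(s_i^2\tau)=\Theta_{\mathfrak g}$ is unjustified: your $l$ minimizes only at the single scale $\tau_0$, and one only knows $\mathcal V_{\mathfrak g,l}(\sigma)\geq\mathcal V_{\mathfrak g}(\sigma)$, so its limit as $\sigma\to0$ may strictly exceed $\Theta_{\mathfrak g}$. The ``cleaner'' fix you suggest via part (2) of Lemma \ref{lem:sin_red_vol_prop} is inapplicable, since its hypothesis (equality of $\mathcal V_{\mathfrak g}$ at two scales) would force $\mathfrak g$ itself to be a soliton, which is not available.

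There is also a genuine error in your identification of $\Theta_{\mathfrak h}$: rescaling a soliton flow moves the basepoint by the diffeomorphism flow of $\nabla f$ unless the basepoint is a critical point, so self-similarity does \emph{not} force $\mathcal V_{\mathfrak h}(\tau)$ to be constant in $\tau$, and no singular reduced distance based at a general $q\in N$ is a soliton function (for a non-spine basepoint, e.g.\ a point off the zero section of the FIK shrinkers, one has $\Theta_h(q)>\int e^{-f}$). Asserting either of these amounts to assuming $q\in S(N,h)$, which is essentially the content of the theorem. The correct mechanism is to take singular reduced distances $l_i$ of $(\mathfrak g)_{s_i}$ realizing the minimum at a fixed scale, pass to a limit $l_\infty$ on $\mathfrak h$, and observe that $\mathcal V_{\mathfrak h,l_\infty}(\tau)$ is pinched to the constant $\Theta_{\mathfrak g}$ for $\tau\geq1$ (from below because $\mathcal V_{(\mathfrak g)_{s_i},l_i}\geq\mathcal V_{\mathfrak g}(s_i^2\cdot)\to\Theta_{\mathfrak g}$, from above by monotonicity and the value at $\tau=1$); then letting $\tau\to\infty$ and applying part (1) of Lemma \ref{lem:soliton_density} to $l_\infty$ yields $\Theta_{\mathfrak g}=\int_N(4\pi)^{-n/2}e^{-f}d\mu_{h(-1)}$, after which your two easy inequalities sandwich $\Theta_{\mathfrak h}$. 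Your draft contains several of these ingredients, but as written the key inequality is missing and the basepoint issue is assumed rather than proved.
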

It follows that, although not unique, any tangent flow of $\mathfrak g$ has the same asymptotic reduced volume $\lim_{\tau\rightarrow+\infty} \mathcal{V}_{\mathfrak h}(\tau)$, by Lemma \ref{lem:soliton_density}.

We describe below another important implication of  Theorem \ref{thm:density_uniqueness}: although the reduced volume may depend on $B$, as was discussed in Remark \ref{rmk:rv_B}, the density is independent of such choice. This follows from the observation that the collection of tangent flows $\mathfrak h$ does not depend on $B$. Hence, the corresponding asymptotic reduced volume is independent of $B$, thus from Theorem \ref{thm:density_uniqueness} the density $\Theta_{\mathfrak g}$ also does not depend on $B$.

\subsection{The spine of a shrinking Ricci soliton.} Let $(N,h(-1),f)$ be a gradient shrinking Ricci soliton with bounded curvature, and associated Ricci flow $(N,h(t))_{t\in (-\infty,0)}$. It is easy to see that this flow satisfies \eqref{rfnb}, for some $B<+\infty$.

The discussion above shows that the density function $\Theta_h :N\rightarrow (0,1]$ is well defined and independent of the choice of the class $\mathcal{RF}(n,B)$.

We can thus define the \textit{spine of  $(N,h(t))_{t\in(-\infty,0)}$} as
$$S(N,h)=\{ x\in N, \Theta_h \textrm{ attains its minimum value at } x\}.$$
We note that $S(N,h)$ is non-empty, since $\Theta_h$ attains a minimum value at any critical point of $f$, by Lemma \ref{lem:soliton_density}. Due to the quadratic growth of $f$, see for instance \cite{HaslhoferMuller11}, $f$ always has a critical point. 

Moreover, the lower semicontinuity of the density function \eqref{eqn:density_semicont} implies that $S(N,h)$ is a closed subset of $N$.

The notion of the spine $S(N,h)$ will be important to us because of the following theorem.

\begin{theorem}[Theorem 4.1 in \cite{Gianniotis2017}]\label{thm:spine_geometry}
Let $(N,h(t))_{t\in (-\infty,0)}$ be the Ricci flow induced by a non-flat gradient shrinking Ricci soliton satisfying \eqref{rfnb}. Then, there exists an integer $2\leq k \leq n$, a constant $D(n,B)<+\infty$, and a gradient shrinking Ricci soliton $(\bar N,\bar h(t))_{t\in (-\infty,0)}$ such that
\begin{enumerate}
\item $(N,h(t))$ splits isometrically as $(\bar N,\bar h(t)) \times \mathbb R^{n-k},g_{Eucl})$.
\item $S(N,h)=K\times \mathbb R^{n-k}$ and $\diam_{\bar h(t)}(K)\leq D\sqrt{-t}$ for every $t\in (-\infty,0)$.
\end{enumerate}
\end{theorem}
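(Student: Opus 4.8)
We prove the statement by parabolic rescaling, a splitting off the maximal Euclidean factor, and a comparison argument showing that the spine is confined to a ball whose radius is controlled by the soliton potential.

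First, by the structure theory of gradient shrinking Ricci solitons with bounded curvature — the de Rham splitting of $(N,h(-1))$, together with the fact that $g$, $\ric$, and hence $\hess f=\tfrac12 g-\ric$ are block diagonal with respect to this splitting, so that $\nabla f$ preserves the factors — the flow splits isometrically as $(N,h(t))=(\bar N,\bar h(t))\times(\mathbb R^{n-k},g_{Eucl})$, where $\mathbb R^{n-k}$ is the \emph{maximal} Euclidean factor and $(\bar N,\bar h(t))_{t\in(-\infty,0)}$ is induced by a non-flat normalized shrinking soliton $(\bar N,\bar h(-1),\bar f)$ with no line factor. Since there is no non-flat shrinking soliton of dimension $0$ or $1$, $k:=\dim\bar N$ satisfies $2\le k\le n$; moreover $\bar h$ satisfies \eqref{rfnb} with the \emph{same} constant $B$, because the curvature norm is unchanged under taking a Riemannian product with a flat factor. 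As $\bar N$ has no line factor it carries no nonzero parallel vector field, so $\bar f$ is the unique normalized soliton function on $\bar N$; consequently every normalized soliton function on $N$ has the form $\bar f(\bar x)+\tfrac14|y-y_0|^2$ for some $y_0\in\mathbb R^{n-k}$. Finally, translations of the $\mathbb R^{n-k}$ factor are isometries of $(N,h(t))$ for every $t$ and hence preserve $\Theta_h$; so $\Theta_h(\bar x,y)$ is independent of $y$, and by lower semicontinuity of $\Theta_h$ we get a closed set $K\subset\bar N$ with $S(N,h)=K\times\mathbb R^{n-k}$, as claimed. Since $\Theta_h$ is scale invariant, parabolic rescaling of the flow by a factor $-t$ leaves $K$ unchanged, so it suffices to prove $\diam_{\bar h(-1)}(K)\le D(n,B)$ and then rescale.

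To bound the diameter, fix $p\in K$ and set $\mathfrak h=(N,h(t),(p,0))_{t\in(-\infty,0)}\in\mathcal{RF}(n,B)$, so $\Theta_h(p,0)=\min_N\Theta_h=:\mu_\infty$. By Lemma \ref{lem:soliton_density}(1), $\lim_{\tau\to\infty}\mathcal V_{\mathfrak h}(\tau)=\int_N(4\pi)^{-n/2}e^{-F}d\mu_{h(-1)}$ for the soliton function $F$ of $\mathfrak h$, and by Lemma \ref{lem:soliton_density}(2) this equals $\mu_\infty$; since $\mathcal V_{\mathfrak h}(\tau)$ is monotone decreasing with $\lim_{\tau\to 0}\mathcal V_{\mathfrak h}(\tau)=\Theta_h(p,0)=\mu_\infty$, it is constant in $\tau$. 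Lemma \ref{lem:sin_red_vol_prop}(2) then produces a singular reduced distance $l_0$ of $\mathfrak h$ with $l_0(\cdot,-1)$ a normalized soliton function of $(N,h(-1))$, so by the first paragraph $l_0(\cdot,-1)=\bar f(\bar x)+\tfrac14|y-y_0|^2$ for some $y_0$; in particular $l_0((p,0),-1)=\bar f(p)+\tfrac14|y_0|^2\ge\bar f(p)$. On the other hand, testing the reduced distance against the constant curve at the basepoint and using \eqref{rfnb} (recall $|R|\le c(n)|\riem|$), then passing to the limit from smooth flows, gives $l(\mathrm{basepoint},\tau)\le c(n)B$ for every singular reduced distance of every flow in $\mathcal{RF}(n,B)$; hence $\bar f(p)\le l_0((p,0),-1)\le c(n)B$. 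Now the Cao--Zhou lower bound for the potential of a shrinking soliton, $\bar f(\bar x)\ge\tfrac14\big(\dist_{\bar h(-1)}(\bar x,x_0)-c_1\big)_+^2$ with $c_1=c_1(n,B)$ and $x_0$ a minimum point of $\bar f$ (see \cite{HaslhoferMuller11}), forces $\dist_{\bar h(-1)}(p,x_0)\le 2\sqrt{c(n)B}+c_1(n,B)$; since $p\in K$ was arbitrary, $K$ lies in a ball of that radius about $x_0$, so $\diam_{\bar h(-1)}(K)\le D(n,B)$, and undoing the rescaling yields $\diam_{\bar h(t)}(K)\le D(n,B)\sqrt{-t}$ for all $t<0$.

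The main obstacle is the chain of reasoning just sketched: the essential point is to recognise that a point lies in the spine \emph{precisely} when its reduced volume is constant in $\tau$, at which moment the rigidity of Lemma \ref{lem:sin_red_vol_prop}(2) promotes the associated reduced distance to a soliton potential, after which Perelman's constant-curve comparison and the Cao--Zhou growth estimate close the argument with constants depending only on $n$ and $B$. A secondary point requiring care is the splitting step: one must verify that the potential respects the de Rham decomposition, that $\bar f$ is unique on the non-flat factor (equivalently, that a line factor is the only source of a parallel vector field on a shrinking soliton), and that $\bar h$ retains the Type I bound with the same $B$.
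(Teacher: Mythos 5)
You should first note that this paper does not actually prove Theorem \ref{thm:spine_geometry}: it is imported from Theorem 4.1 of \cite{Gianniotis2017}, so there is no internal proof to compare against. Judged on its own, the analytic core of your argument is correct and uses exactly the toolkit the paper quotes: for a spine basepoint $(p,0)$ the reduced volume $\mathcal V_{\mathfrak h}(\tau)$ is squeezed between $\Theta_h(p,0)$ and $\lim_{\tau\to\infty}\mathcal V_{\mathfrak h}(\tau)$, both equal to $\int_N(4\pi)^{-n/2}e^{-F}d\mu_{h(-1)}$ by Lemma \ref{lem:soliton_density}, hence constant; Lemma \ref{lem:sin_red_vol_prop}(2) then upgrades some singular reduced distance to a normalized soliton potential; the constant-curve test together with \eqref{rfnb} gives $l(\mathrm{basepoint},\tau)\le c(n)B$, and this bound does pass to the Cheeger--Gromov limits defining singular reduced distances; the quadratic growth of the potential from \cite{HaslhoferMuller11} (constants depending only on $n$) then confines every spine point of $\bar N$ to a fixed ball; and the rescaling step is legitimate since the density, hence the spine, is invariant under parabolic rescaling (e.g.\ via Theorem \ref{thm:density_uniqueness}).

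The genuine gap is in your splitting paragraph, and it matters because the entire diameter estimate hinges on the \emph{uniqueness} of $\bar f$: without it, different spine points could see potentials translated arbitrarily far, and the bound $\bar f(p)\le c(n)B$ would not be uniform. You invoke the de Rham decomposition of $(N,h(-1))$, which requires simple connectedness that you have neither assumed nor reduced to, and you assert that ``no line factor implies no nonzero parallel vector field,'' which is false for general complete manifolds (a flat torus has parallel fields and no $\mathbb R$-factor); moreover non-simply-connected shrinkers exist for which the flat factor of the universal cover does not split off globally (e.g.\ the $\mathbb Z_2$-quotient of the round cylinder by $(x,y)\mapsto(-x,-y)$), so ``the maximal Euclidean de Rham factor'' is not even the right object here. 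The repair is standard and should be made explicit: split off the span of the parallel vector fields of $(N,h(-1))$ rather than a de Rham factor. For a gradient shrinker, a parallel field $P$ satisfies $\ric(\cdot,P)=0$, so $u=\langle\nabla f,P\rangle$ has $\hess u=0$ and $\nabla u=\tfrac12P$, a nonconstant affine function, and a complete manifold admitting one splits an $\mathbb R$-factor isometrically; doing this for a basis of parallel fields yields $N=\bar N\times\mathbb R^{n-k}$ with $\bar N$ carrying no nonzero parallel field. With that construction in place, your uniqueness of $\bar f$, the classification $F(\bar x,y)=\bar f(\bar x)+\tfrac14|y-y_0|^2$ of normalized potentials on the product, the bound $k\ge2$, and the rest of your argument go through unchanged.
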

\begin{remark}
Observe that if $k =1$ above $(N,h(t))$ is necessarily the Euclidean space.
\end{remark}
\begin{remark}
Note that in the regularity theory for harmonic maps/minimal currents, the spine is defined as the set where the density attains its maximum,  in contrast to the definition above where the spine consists of points with minimal density. This is due to the reversal of the monotonicity and semicontinuity properties of the reduced volume.

Recall that the spine of a tangent map/cone is also the linear subspace of the available translation symmetries. Theorem \ref{thm:spine_geometry} can be viewed as the analogue of this fact for shrinking Ricci solitons with bounded curvature, as it implies that the spine $(S(N,h),h(t))$ converges to $\mathbb R^{n-k}$ in the pointed Gromov--Hausdorff topology, as $t\rightarrow 0$. 
\end{remark}

\begin{remark}
If $(N,h(t))_{t\in(-\infty,0)}$ is the flow induced by a compact shrinking Ricci soliton, the tangent flow at any $x\in N$ is $(N,h(t),x)_{t\in(-\infty,0)}$. This implies that the density function $\Theta_h$ is constant, hence $S(N,h)=N$.

The same holds if $N=\bar N \times \mathbb R^k$ for some compact shrinking Ricci soliton $(\bar N,g,f)$, as for example $\mathbb S^{n-k}\times\mathbb R^k$ with the standard soliton structure.

For the $U(n)$-invariant shrinking K\"ahler Ricci solitons  on line bundles over $\mathbb C \mathbb P^{n-1}$ constructed in \cite{FIK03}, the spine is the zero section $\mathcal Z$ of the corresponding line bundle. This is because the flow is non-singular away from $\mathcal Z$ and $U(n)$ acts isometrically and transitively on $\mathcal Z$.
\end{remark}

\subsection{Compactness of shrinking solitons.} Below we prove a compactness theorem for Ricci solitons, under a uniform curvature bound. Moreover, Lemma \ref{lem:soliton_compactness} asserts that, along a convergent sequence of such solitons, points with lowest density converge to points in the spine of the limit. 

We first need the following auxiliary lemma, which allows to center soliton functions `around'  a given point on the spine. 

\begin{lemma}[Aligning a soliton function to a point on the spine]\label{lem:align_spineandcritical_points}
Let $(N,g,f)$ be a gradient shrinking Ricci soliton satisfying
\begin{equation*}
\sup_{N} |\riem(g)|_g \leq B,
\end{equation*}
for some $B<+\infty$ and $f$ is a normalized soliton function. Also, let $q\in S(N,g)$. Then, there is a normalized soliton function $f'$ with a critical point $p\in N$ such that
\begin{equation}\label{eqn:adjusted_point}
d_g(p,q) \leq D,
\end{equation}
where $D=D(n,B)<+\infty$ is the constant given by Theorem \ref{thm:spine_geometry}.
\end{lemma}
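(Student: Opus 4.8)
The plan is to combine the structure theorem for the spine (Theorem \ref{thm:spine_geometry}) with the well-known fact that, on a gradient shrinking Ricci soliton, normalized soliton functions differ only by composition with an isometry of the flow, i.e.\ the ambiguity in $f$ corresponds precisely to the one-parameter families generated by the space of parallel vector fields appearing in the Euclidean splitting. Concretely, first I would invoke Theorem \ref{thm:spine_geometry} to write $(N,g)\cong (\bar N,\bar g)\times(\mathbb R^{n-k},g_{\mathrm{Eucl}})$ isometrically, with $S(N,g)=K\times\mathbb R^{n-k}$, where $K\subset\bar N$ is compact with $\diam_{\bar g}(K)\le D(n,B)$ (at time $t=-1$). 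If the soliton is flat then $N=\mathbb R^n$ and the statement is trivial (any two soliton functions are translates of $|x|^2/4+\mathrm{const}$, and every point is critical for some normalized $f$), so I may assume $k\ge 2$ and the soliton is non-flat, so that the theorem applies.

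Next I would identify how $q\in S(N,g)$ and the given normalized $f$ relate. Writing $q=(\bar q, v)$ with $\bar q\in K$, $v\in\mathbb R^{n-k}$, the normalized soliton function splits as $f(\bar x, y)=\bar f(\bar x)+\tfrac14|y-w|^2$ for some $\bar f$ on $\bar N$ which is itself a normalized soliton function for $(\bar N,\bar g)$, and some fixed $w\in\mathbb R^{n-k}$ (this splitting of $f$ is standard: the Hessian equation forces the Euclidean directions to contribute a standard quadratic, with the translation $w$ being the only freedom). I would then produce $f'$ by translating only in the Euclidean factor: set $f'(\bar x, y)=\bar f(\bar x)+\tfrac14|y-v|^2$. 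This is again a normalized soliton function (translations in the flat directions are isometries of the flow commuting with the product structure, and the normalization constant $c=0$ is preserved). Now $f'$ has a critical point of the form $p=(\bar p, v)$, where $\bar p$ is any critical point of $\bar f$ on $\bar N$; such $\bar p$ exists by the quadratic growth of $\bar f$ (as cited, e.g.\ \cite{HaslhoferMuller11}), and moreover every critical point of $\bar f$ lies in the spine $S(\bar N,\bar g)=K$ by Lemma \ref{lem:soliton_density}(2) applied to $\bar N$. Hence $\bar p,\bar q\in K$.

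Finally I would estimate the distance: since $p=(\bar p,v)$ and $q=(\bar q,v)$ share the same Euclidean coordinate,
\begin{equation*}
d_g(p,q)=d_{\bar g}(\bar p,\bar q)\le \diam_{\bar g}(K)\le D(n,B),
\end{equation*}
which is exactly \eqref{eqn:adjusted_point}, with $D$ the constant from Theorem \ref{thm:spine_geometry}. (If one works with $g=g(-1)$ this is immediate; the statement is time-independent since the soliton structure is fixed at $t=-1$.)

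\textbf{Main obstacle.} The routine parts are the distance estimate and the existence of a critical point of $\bar f$ in $K$. The step needing the most care is the rigid splitting of the soliton function: showing that every normalized soliton function on a product soliton $\bar N\times\mathbb R^{n-k}$ must have the form $\bar f(\bar x)+\tfrac14|y-w|^2$, so that the \emph{only} freedom available is a Euclidean translation — and correspondingly that translating in the flat factor genuinely yields another normalized soliton function with a critical point over the prescribed $v$. This uses the equation $\ric+\hess f=\tfrac g2$ restricted to the flat directions (forcing $\hess f=\tfrac12 g_{\mathrm{Eucl}}$ there, hence the quadratic), together with the normalization $R+|\nabla f|^2-f=0$ to pin down that no cross terms survive and that $c=0$ is preserved under the translation. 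Care is also needed to ensure completeness of $\nabla f'$ (inherited from completeness of $\nabla f$ and of the Euclidean gradient), so that $f'$ indeed defines an admissible normalized soliton in the sense used in the paper.
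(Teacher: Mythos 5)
Your proposal is correct and follows essentially the same route as the paper: translate the soliton function in the Euclidean factor so that a critical point sits over the same Euclidean coordinate as $q$, observe the translate is still a normalized soliton function, use Lemma \ref{lem:soliton_density}(2) to place its critical point in the spine, and conclude with the diameter bound of Theorem \ref{thm:spine_geometry}. The only loose point is the assertion $S(\bar N,\bar g)=K$, which Theorem \ref{thm:spine_geometry} does not literally provide; it is unnecessary, since applying Lemma \ref{lem:soliton_density}(2) to $(N,g,f')$ directly gives $(\bar p,v)\in S(N,g)=K\times\mathbb R^{n-k}$ and hence $\bar p\in K$ (this is exactly what the paper does), and likewise the explicit splitting $f(\bar x,y)=\bar f(\bar x)+\tfrac14|y-w|^2$, while correct and standard, can be bypassed by translating $f$ itself as in the paper's proof.
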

\begin{proof}
 By Theorem \ref{thm:spine_geometry}, $(N,g)$ splits isometrically as $(\bar N^k,\bar g)\times\mathbb R^{n-k}$ and $S(N,g)= K \times \mathbb R^{n-k}$, where $K$ is compact and satisfies
 \begin{equation}\label{diameter}
 \diam_{\bar g} (K) \leq D.
 \end{equation} 
 
We may assume that $q=(\bar q,0)$ for some $\bar q \in K$. 

Now, let $p_0=(\bar p, v_0)\in \bar N\times \mathbb R^k$ be a critical point for $f$ and define $f'$ by 
$$f'(\bar x, v) = f( \bar x, v+v_0).$$
 Note that $f'$ is also a normalized soliton function and has a critical point at $(\bar p,0)$.

Lemma \ref{lem:soliton_density} implies that critical points of $f'$ are in $S(N,g)$, thus $\bar p\in K$. Then, \eqref{diameter} implies that 
$$d_{g}(p,q) = d_{\bar g}(\bar p,\bar q)\leq D.$$ 
\end{proof}

\begin{lemma}\label{lem:soliton_compactness}
Let $(N_i,h_i(t), q_i)_{t\in (-\infty,0)}$ be a sequence of pointed complete Ricci flows induced by gradient shrinking Ricci solitons with bounded curvature.  Suppose that
\begin{equation}\label{sol_boun}
\sup_{ N_i }|\riem(h_i(t))|_{h_i(t)}\leq \frac{B}{|t|}
\end{equation}
for $t\in (-\infty,0)$ and
\begin{equation}\label{inj_bound}
inj_{h_i(-1)}(q_i)\geq i_0.
\end{equation}
Then, there exists a subsequence $(N_{i_l},h_{i_l}(t),q_{i_l})_{t\in (-\infty,0)}$ converging in the smooth Cheeger--Gromov topology to $(N_\infty,h_\infty(t),q_\infty)_{t\in (-\infty,0)}$, which also satisfies \eqref{sol_boun} and is induced by a shrinking Ricci soliton with bounded curvature. 

 Moreover, if $q_i\in S(N_i,h_i)$, then $q_\infty\in S(N_\infty,h_\infty)$ and $$\Theta_{h_\infty}(q_\infty)=\lim_l \Theta_{h_i}(q_i).$$
\end{lemma}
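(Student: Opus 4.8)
The plan is to prove the two assertions of Lemma \ref{lem:soliton_compactness} separately: first the compactness statement, then the behaviour of the spine and density along the convergent subsequence.

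For the compactness statement, the hypotheses \eqref{sol_boun} and \eqref{inj_bound} give a uniform two-sided curvature bound on every compact time interval $[-a,-b]\subset(-\infty,0)$ together with a lower injectivity radius bound at the basepoint at time $-1$; since the flows are Ricci solitons, Shi's derivative estimates give uniform bounds on all derivatives of curvature on such intervals. Hamilton's compactness theorem for Ricci flows then yields a subsequence converging in the smooth pointed Cheeger--Gromov sense, on $M\times(-\infty,0)$, to a limit Ricci flow $(N_\infty,h_\infty(t),q_\infty)_{t\in(-\infty,0)}$ which again satisfies \eqref{sol_boun}. To see that $(N_\infty,h_\infty(t))$ is itself induced by a shrinking Ricci soliton, I would pass the soliton structure to the limit: writing the soliton equation at time $-1$ in the form $\ric(h_i(-1))+\hess_{h_i(-1)} f_i=\tfrac12 h_i(-1)$, one uses that the $f_i$ may be normalized (via the standard identity $R+|\nabla f|^2-f=0$ together with the quadratic growth estimate of Haslhofer--M\"uller \cite{HaslhoferMuller11}) so that, after aligning them using Lemma \ref{lem:align_spineandcritical_points} so that each has a critical point at bounded distance from $q_i$, the functions $f_i$ and all their derivatives are locally uniformly bounded; passing to a further subsequence $f_i\to f_\infty$ in $C^\infty_{loc}$ gives a normalized soliton function on $N_\infty$, and the completeness of $\nabla f_\infty$ follows from the quadratic growth bound. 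This shows $(N_\infty,h_\infty(t))_{t\in(-\infty,0)}$ is induced by a shrinking Ricci soliton with bounded curvature.

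For the second assertion, assume $q_i\in S(N_i,h_i)$. I first bound the densities from above and below. Since each $\Theta_{h_i}$ attains its minimum at $q_i$ and $\Theta_{h_i}\le 1$ everywhere, we have $\Theta_{h_i}(q_i)\le 1$; on the other hand, by lower semicontinuity \eqref{eqn:density_semicont} applied to $(N_i,h_i(t),q_i)\to(N_\infty,h_\infty(t),q_\infty)$ we get $\liminf_i \Theta_{h_i}(q_i)\ge\Theta_{h_\infty}(q_\infty)>0$. So the densities are bounded away from $0$ and I may pass to a subsequence along which $\Theta_{h_i}(q_i)\to\theta$ for some $\theta\in(0,1]$, and $\theta\ge\Theta_{h_\infty}(q_\infty)$. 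It remains to show $\theta\le\Theta_{h_\infty}(y)$ for every $y\in N_\infty$ — this gives both $\theta=\Theta_{h_\infty}(q_\infty)$ (taking $y=q_\infty$) and $q_\infty\in S(N_\infty,h_\infty)$. Given $y\in N_\infty$, Cheeger--Gromov convergence provides points $y_i\in N_i$ with $(N_i,h_i(t),y_i)\to(N_\infty,h_\infty(t),y)$; since $q_i$ minimizes $\Theta_{h_i}$ we have $\Theta_{h_i}(q_i)\le\Theta_{h_i}(y_i)$, and by Theorem \ref{thm:density_uniqueness} each $\Theta_{h_i}(y_i)$ equals the asymptotic reduced volume $\lim_{\tau\to\infty}\mathcal V_{(N_i,h_i,y_i)}(\tau)$, which for a soliton flow is, by Lemma \ref{lem:soliton_density}(1), the Gaussian weighted volume $\int_{N_i}(4\pi)^{-n/2}e^{-f_i^{y_i}}d\mu_{h_i(-1)}$ for a soliton function $f_i^{y_i}$ aligned to have a critical point near $y_i$. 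Using the $C^\infty_{loc}$ convergence of the aligned soliton functions together with the uniform quadratic growth of $f_i^{y_i}$ (which controls the tails of the Gaussian integral uniformly), these weighted volumes converge to $\int_{N_\infty}(4\pi)^{-n/2}e^{-f_\infty^{y}}d\mu_{h_\infty(-1)}=\Theta_{h_\infty}(y)$. Hence $\theta=\lim_i\Theta_{h_i}(q_i)\le\Theta_{h_\infty}(y)$, as required; and since this holds along the subsequence chosen for any initial subsequence, the full limit $\lim_l\Theta_{h_i}(q_i)$ exists and equals $\Theta_{h_\infty}(q_\infty)$.

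The main obstacle I expect is the uniform control of the soliton functions: one must align the $f_i$ (and, for the second part, also the $f_i^{y_i}$) simultaneously so that they converge in $C^\infty_{loc}$, \emph{and} one must have a quadratic growth estimate with constants depending only on $n$ and $B$ so that the Gaussian integrals $\int e^{-f_i}$ and their tails are controlled uniformly in $i$ — only then can one interchange the limit in $i$ with the spatial integral. Lemma \ref{lem:align_spineandcritical_points} handles the alignment and the Haslhofer--M\"uller growth estimate \cite{HaslhoferMuller11} handles the uniformity, so once these are invoked correctly the rest is a routine passage to the limit.
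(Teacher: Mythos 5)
There is a genuine gap in the main (second) part of your argument. To get $\lim_i\Theta_{h_i}(q_i)\le\Theta_{h_\infty}(y)$ for every $y\in N_\infty$ you pick $y_i\to y$ and assert that $\Theta_{h_i}(y_i)$ equals the asymptotic reduced volume $\lim_{\tau\to\infty}\mathcal V_{(N_i,h_i,y_i)}(\tau)=\int_{N_i}(4\pi)^{-n/2}e^{-f_i^{y_i}}d\mu_{h_i(-1)}$, citing Theorem \ref{thm:density_uniqueness} and Lemma \ref{lem:soliton_density}(1), after ``aligning'' $f_i^{y_i}$ to have a critical point near $y_i$. Neither step is available. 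Theorem \ref{thm:density_uniqueness} identifies $\Theta_{h_i}(y_i)$ with the Gaussian density of the potential of the \emph{tangent flow at $y_i$}, not of $(N_i,h_i)$ itself; for the soliton flow based at an arbitrary point one only has, by monotonicity and Lemma \ref{lem:soliton_density}(1), the inequality $\Theta_{h_i}(y_i)\ge\lim_{\tau\to\infty}\mathcal V_{(N_i,h_i,y_i)}(\tau)=\int_{N_i}(4\pi)^{-n/2}e^{-f_i}d\mu_{h_i(-1)}$, with equality guaranteed only at critical points (Lemma \ref{lem:soliton_density}(2)). Moreover, Lemma \ref{lem:align_spineandcritical_points} aligns a potential only to a point of the \emph{spine}; if $y_i\notin S(N_i,h_i)$ no normalized potential has a critical point near $y_i$ (think of the FIK shrinkers, whose spine is the zero section). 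Since all normalized potentials of a fixed shrinker have the same Gaussian weighted volume, your claimed identity would force $\Theta_{h_i}$ to be constant on $N_i$, which is false for FIK; in effect you are using an upper semicontinuity of the density at the points $y_i$, whereas only the lower semicontinuity \eqref{eqn:density_semicont} holds. So the key inequality $\theta\le\Theta_{h_\infty}(y)$ is not established. (A smaller instance of the same issue occurs in your first part: aligning $f_i$ near $q_i$ via Lemma \ref{lem:align_spineandcritical_points} also requires $q_i\in S(N_i,h_i)$, which is not assumed there, so the critical points of $f_i$ could escape to infinity.)

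The repair is the paper's shorter route, which never compares with arbitrary $y_i$. Since $q_i\in S(N_i,h_i)$, Lemma \ref{lem:soliton_density} gives $\Theta_{h_i}(q_i)=\int_{N_i}(4\pi)^{-n/2}e^{-f_i}d\mu_{h_i(-1)}$ for potentials aligned to $q_i$ (here the alignment is legitimate), and your convergence of Gaussian integrals yields $\theta=\int_{N_\infty}(4\pi)^{-n/2}e^{-f_\infty}d\mu_{h_\infty(-1)}$. By Lemma \ref{lem:soliton_density} again, this quantity is the asymptotic reduced volume based at $q_\infty$, hence $\le\Theta_{h_\infty}(q_\infty)$ by monotonicity, and it equals $\Theta_{h_\infty}$ at the critical points of $f_\infty$, hence is the minimum value of $\Theta_{h_\infty}$. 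Combined with your (correct) lower semicontinuity bound $\theta\ge\Theta_{h_\infty}(q_\infty)$, this gives $\theta=\Theta_{h_\infty}(q_\infty)=\min\Theta_{h_\infty}$, i.e. $q_\infty\in S(N_\infty,h_\infty)$, which is exactly the desired conclusion.
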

\begin{proof}
In view of bounds \eqref{sol_boun}-\eqref{inj_bound} and Hamilton's compactness theorem for sequences of Ricci flows, passing to a subsequence if necessary,  we may assume that $(N_i,h_i(t), q_i)_{t\in (-\infty,0)}$ converges to a limit flow $(N_\infty, h_\infty, q_\infty)_{t\in (-\infty,0)}$ that also satisfies \eqref{sol_boun}.

Now, suppose that $q_i\in S(N_i,h_i)$ and let $f_i$ be normalized soliton functions with critical points $p_i\in N_i$, satisfying 
\begin{equation}\label{db}
d_{h_i}(p_i,q_i)\leq D,
\end{equation}
 given by Lemma \ref{lem:align_spineandcritical_points}. 

Since $p_i$ is a critical point, \eqref{sol_boun} implies
\begin{equation}
|f_i(p_i)|=\big|R(h_i(-1))(p_i) + |\nabla f_i(p_i)|^2 \big| \leq C(n,B).
\end{equation}
Differentiating the soliton equation and applying Shi's derivative estimates we obtain uniform bounds on $f_i$ and its derivatives, within bounded distance from $p_i$. Thus, by Arzela--Ascoli and passing to a subsequence if necessary, using \eqref{db}, we may assume that $f_i$ converges smoothly to a  function $f_\infty$ on $N_\infty$, uniformly locally. Moreover, $f_\infty$ is a normalized soliton function, since it is a property that passes to smooth limits.

Since $f_i$ grow quadratically in the distance from $p_i$, and the volume of $\vol_{h_i(-1)} (B_{p_i}(r))$ grows at most exponentially in $r$, it follows that
\begin{equation}\label{eqn:integrals_converge}
\int_{N_{i}}e^{-f_{i}}d\mu_{h_{i}(-1)}\rightarrow \int_{N_\infty} e^{-f_\infty}d\mu_{h_\infty(-1)}.
\end{equation}

Since $q_i\in S(N_i,h_i)$, recall that $\Theta_{h_i}(q_i)=\int_{N_{i}}e^{-f_{i}}d\mu_{h_{i}(-1)}$, due to Lemma \ref{lem:soliton_density}. The lower semicontinuity of the density function under the Cheeger--Gromov convergence of Ricci flows and \eqref{eqn:integrals_converge} imply that
\begin{equation}
 \int_{N_\infty} e^{-f_\infty}d\mu_{h_\infty(-1)}\geq\Theta_{h_\infty}(q_\infty) .
\end{equation}
On the other hand, monotonicity of reduced volume and Lemma \ref{lem:soliton_density} implies that
\begin{equation}
 \int_{N_\infty} e^{-f_\infty}d\mu_{h_\infty(-1)}= \lim_{\tau\rightarrow +\infty} \mathcal V_{\mathfrak q_{\infty}}(\tau)\leq \Theta_{h_\infty}(q_\infty).
\end{equation}
Thus, $\Theta_{h_\infty}$ attains its minimum value $\int_{N_\infty} e^{-f_\infty}d\mu_{h_\infty(-1)}=\lim_i \Theta_{h_i}(q_i)$ at $q_\infty$, hence $q_\infty\in S(N_\infty,h_\infty)$. This suffices to prove the Lemma.
\end{proof}

\section{The quantitative stratification}\label{section:q_stratification}
In this section we adapt the ideas of Cheeger--Naber from \cite{CheegerNaber13a} to the setting of Ricci flows subject to a Type I curvature bound. In particular, we define the quantitative stratification and prove volume estimates similar to those in \cite{CheegerNaber13a}.

Before we define the quantitative stratification in detail, a few definitions are in order. First, we need an appropriate notion of `closeness' of two pointed Ricci flows:

\begin{definition}\label{distance}
Let $\mathfrak g_1= (M_1,g_1(t),p_1 )_{t\in (-2,0)}, \mathfrak g_2= (M_2,g_2(t), p_2)_{t\in (-2,0)}$ be complete pointed Ricci flows. We say that $\mathfrak g_2$ is $\eta$-close to $\mathfrak g_1$, $\eta>0$, if the following holds:
\begin{enumerate}
\item There exists $U\subset M_1$ with $B_{g_1(-1)} (p_1, \eta^{-1})\subset U$ and a smooth map $F: U\rightarrow M_2$, diffeomorphism onto its image, satisfying $F(p_1)=p_2$.
\item  $ (1+\eta)^{-2} g_1(t) \leq F^* g_2(t) \leq (1+\eta)^2 g_1(t)$ for every $t\in [-2+\eta,-\eta]$.
\item $ |(\nabla^{g_1(t)})^l F^* g_2(t)|_{g_1(t)} <\eta $ for $t\in [-2+\eta,-\eta]$ and $1\leq l \leq \lfloor 1/\eta\rfloor$.
\end{enumerate}
\end{definition}

Recall that from the work of Naber \cite{Naber10}, tangent flows are  selfsimilar solutions to the Ricci flow, induced by shrinking Ricci solitons. In other words, the flow looks selfsimilar in small scales. The definition below makes this precise, and also quantifies the amount of translational symmetry of a given Ricci flow, in the sense of isometric splitting of Euclidean factors.

\begin{definition}
Given $\varepsilon>0$, $r\in (0,1]$, $B<+\infty$ and integer $k\geq0$, a Ricci flow $\mathfrak g_x=(M,g(t),x)_{t\in (-2,0)}$ is $(\varepsilon, r, k,B)$-selfsimilar with respect to the $k$-dimensional subspace $V\subset T_x M$  if there exists a pointed shrinking Ricci soliton
$$\mathfrak h=(N,h(t),q)_{t\in (-\infty,0)} = (\tilde N,\tilde h(t),\tilde q)\times (\mathbb R^k, g_{Eucl},0)$$
 satisfying  $\sup_N|\riem(h(-1))|_{h(-1)} \leq B$, such that $q\in S( N, h)$, $( \mathfrak g_x )_r$ is $\varepsilon$-close to $\mathfrak h$ and $V=F_*( \{0\} \times \mathbb R^k)$, where $F_* :T_{\tilde q} \tilde N\times \mathbb R^k \rightarrow T_p M$, $F$  as in Definition \ref{distance}.
\end{definition}

Including a uniform \textit{global} curvature bound for the soliton in the definition above is an unusual feature, compared to other instances of quantitative stratification. Here, it provides the essential control on the geometry of the spine, by Theorem \ref{thm:spine_geometry}. 

From now on we fix a $B<+\infty$ and define the quantitative stratification as follows:

\begin{definition}
Let $(M,g(t))_{t\in (-2,0)}$ be a complete Ricci flow, $\dim M=n$. Given an integer $0\leq k \leq n$, $\eta>0$ and $\tau \in (0,1]$ define $S^k_{\eta,\tau}\subset M$ as follows:
\begin{align*}
S^k_{\eta,\tau}=&\{x\in M,\;  \mathfrak g_x \;\tm{is not  $(\eta,s,k+1, B)$-selfsimilar for any $s\in [\tau^{1/2},1]$}\}.
\end{align*}
\end{definition}

Note that the following inclusions hold when $k' \geq k$, $\eta'\leq \eta$ and $\tau'\geq \tau$:
\begin{equation*}
S^k_{\eta,\tau} \subset S^{k'}_{\eta',\tau'}.
\end{equation*}

Moreover, applying Lemma \ref{lem:soliton_compactness} we easily see that the quantitative stratification $S^k_{\eta,\tau}$ is related to the stratification $\Sigma_k$ of the singular set $\Sigma$ of $(M,g(t))_{t\in(-2,0)}$  by
\begin{equation*}
\Sigma_k =  \bigcup_{\eta} \bigcap_{\tau}   S^k_{\eta,\tau}.
\end{equation*}
The aim of this section is to prove Theorem \ref{thm:vol_estimate}.

\subsection{Almost self-similar scales.} In this section we see that the scales and points around which a Ricci flow in $\mathcal{RF}(n,B)$ looks selfsimilar are characterized by the associated reduced volume being `almost' constant. We then show,  in Lemma \ref{lem:line_up}, that as the flow evolves such points are locally `attracted' towards a lower dimensional submanifold.

\begin{lemma}[Quantitative rigidity]\label{lem:quant_rigidity}
For every $\varepsilon, \kappa>0$ and $B<+\infty$, there exists $0<\delta_1(\varepsilon, \kappa,B)\leq\varepsilon$ such that if $\mathfrak g=(M,g(t),x)_{t\in (-2,0)}\in\rf$ satisfies
\begin{enumerate}
\item $g(t)$ is $\kappa$ non-collapsed below scale $1$ for all $t\in (-2,0)$,
\item $\mathcal V_{\mathfrak g} (\delta_1 r^2)- \mathcal V_{\mathfrak g}(r^2) <\delta_1$, for some $r\in (0,1]$
\end{enumerate}
then $\mathfrak g$ is $(\varepsilon, r, 0,B)$-selfsimilar.
\end{lemma}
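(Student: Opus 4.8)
The plan is to argue by contradiction via a compactness argument, which is the standard route for quantitative rigidity statements in the Cheeger--Naber framework. Suppose the conclusion fails: then for some fixed $\varepsilon,\kappa>0$ and $B<+\infty$ there is a sequence $\delta_i\searrow 0$ and pointed flows $\mathfrak g_i=(M_i,g_i(t),x_i)_{t\in(-2,0)}\in\mathcal{RF}(n,B)$, each $\kappa$ non-collapsed below scale $1$, and radii $r_i\in(0,1]$ with $\mathcal V_{\mathfrak g_i}(\delta_i r_i^2)-\mathcal V_{\mathfrak g_i}(r_i^2)<\delta_i$, yet no $\mathfrak g_i$ is $(\varepsilon,r_i,0,B)$-selfsimilar. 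After rescaling each flow by $r_i$ (which preserves membership in $\mathcal{RF}(n,B)$, the non-collapsing, and only shifts the reduced volume's scale parameter) we may assume $r_i=1$, so $\mathcal V_{\mathfrak g_i}(\delta_i)-\mathcal V_{\mathfrak g_i}(1)<\delta_i$.

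Next I would extract a limit. The $\kappa$ non-collapsing at the basepoint at time $-1$, together with the Type I bound \eqref{rfnb} and Perelman's no-local-collapsing-type estimates, gives a uniform injectivity radius lower bound at $(x_i,-1)$; combined with the curvature bound \eqref{rfnb}, Hamilton's compactness theorem yields a subsequential limit $\mathfrak g_\infty=(M_\infty,g_\infty(t),x_\infty)_{t\in(-2,0)}\in\mathcal{RF}(n,B)$ in the smooth pointed Cheeger--Gromov topology. Simultaneously, by the discussion preceding Definition \ref{def:sin_red_dis} (the estimates of Naber and Enders), the singular reduced distances $l_i$ realizing $\mathcal V_{\mathfrak g_i}$ converge in $C^{0,\alpha}_{loc}$, up to a further subsequence, to a singular reduced distance $l_\infty$ on $\mathfrak g_\infty$; by continuity of $l\mapsto\mathcal V_{\mathfrak g,l}(\tau)$ and part (3) of Lemma \ref{lem:sin_red_vol_prop} one gets $\mathcal V_{\mathfrak g_\infty}(\tau)\leq\liminf_i\mathcal V_{\mathfrak g_i}(\tau)$, and more care with the minimization (using lower semicontinuity in both directions) should pin down $\mathcal V_{\mathfrak g_\infty}(\tau)=\lim_i\mathcal V_{\mathfrak g_i}(\tau)$ at least for a dense set of $\tau$, hence for all $\tau$ by monotonicity. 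The hypothesis then forces $\mathcal V_{\mathfrak g_\infty}(0^+)=\mathcal V_{\mathfrak g_\infty}(1)$: indeed for any fixed $\tau_0>0$, $\mathcal V_{\mathfrak g_\infty}(\tau_0)\geq\Theta_{\mathfrak g_\infty}=\lim_{\tau\to 0}\mathcal V_{\mathfrak g_\infty}(\tau)\geq\lim_i\mathcal V_{\mathfrak g_i}(\delta_i)\geq\lim_i(\mathcal V_{\mathfrak g_i}(1)-\delta_i)=\mathcal V_{\mathfrak g_\infty}(1)$, while monotonicity gives the reverse. So $\mathcal V_{\mathfrak g_\infty}$ is constant on $(0,1]$, hence constant everywhere.

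By part (2) of Lemma \ref{lem:sin_red_vol_prop}, $\mathfrak g_\infty$ is induced by a shrinking Ricci soliton and $l_\infty(\cdot,-1)$ is a normalized soliton function; since $\mathfrak g_\infty$ satisfies \eqref{rfnb} with constant $B$, the soliton has $\sup|\riem(h(-1))|\leq B$. It remains to produce the basepoint and Euclidean-splitting data needed for $(\varepsilon,1,0,B)$-selfsimilarity with $k=0$: one needs a soliton $\mathfrak h=(N,h(t),q)$ with $q\in S(N,h)$ such that the unrescaled $\mathfrak g_\infty$ is, say, $\varepsilon/2$-close to it. Here I would use Lemma \ref{lem:align_spineandcritical_points} / Lemma \ref{lem:soliton_compactness}: the limit soliton itself, based at a suitable point of its spine, serves as $\mathfrak h$, and since convergence is smooth on compact subsets of $M_\infty\times(-\infty,0)$, for $i$ large $(\mathfrak g_i)_1=\mathfrak g_i$ is $\varepsilon$-close to $\mathfrak h$ in the sense of Definition \ref{distance} — contradicting that $\mathfrak g_i$ is not $(\varepsilon,1,0,B)$-selfsimilar.

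The main obstacle I anticipate is the bookkeeping around the reduced volume under Cheeger--Gromov limits: because $\mathcal V_{\mathfrak g}$ is defined by minimizing over the (compact) family of singular reduced distances, one must be careful that the limit of the minimizers is a genuine singular reduced distance on $\mathfrak g_\infty$ and that it is (close to) the minimizer there, so that the equality $\mathcal V_{\mathfrak g_\infty}(\tau)=\lim_i\mathcal V_{\mathfrak g_i}(\tau)$ actually holds rather than merely an inequality; part (3) of Lemma \ref{lem:sin_red_vol_prop} plus the compactness in Remark \ref{sin_red_dis_comp} is exactly what makes this work, but it needs to be applied in the right order. A secondary point is verifying that the $\varepsilon$-closeness of Definition \ref{distance} (which controls metrics and their derivatives on the fixed time interval $[-2+\eta,-\eta]$ over the ball $B_{g(-1)}(x,\eta^{-1})$) is genuinely implied by smooth Cheeger--Gromov convergence — this is routine but must be stated, and it is the reason the definition of closeness was set up to only require finitely many derivatives on a fixed parabolic region.
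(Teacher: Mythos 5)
Your overall skeleton (argue by contradiction, rescale so $r_i=1$, extract a pointed Cheeger--Gromov limit from the Type I bound and $\kappa$ non-collapsing, identify the limit as a shrinking soliton via reduced-volume rigidity, then contradict non-selfsimilarity) coincides with the paper's. The gap is in the decisive step, where you convert the hypothesis $\mathcal V_{\mathfrak g_i}(\delta_i)-\mathcal V_{\mathfrak g_i}(1)<\delta_i$ into rigidity for the limit. Your chain of inequalities is wrong in two places: since $\tau\mapsto\mathcal V_{\mathfrak g}(\tau)$ is \emph{decreasing} and $\Theta_{\mathfrak g}=\lim_{\tau\to 0}\mathcal V_{\mathfrak g}(\tau)$, one has $\mathcal V_{\mathfrak g_\infty}(\tau_0)\leq\Theta_{\mathfrak g_\infty}$, not $\geq$; and the inequality $\Theta_{\mathfrak g_\infty}\geq\lim_i\mathcal V_{\mathfrak g_i}(\delta_i)$ is unsupported, because the only semicontinuity available (Lemma \ref{lem:sin_red_vol_prop}(3) and \eqref{eqn:density_semicont}) bounds the \emph{approximators'} quantities from below by the limit's, i.e.\ it goes the other way, and moreover your scale $\delta_i$ varies with $i$. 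More fundamentally, the two-sided convergence $\mathcal V_{\mathfrak g_\infty}(\tau)=\lim_i\mathcal V_{\mathfrak g_i}(\tau)$ that you hope to ``pin down'' is not provided by Lemma \ref{lem:sin_red_vol_prop}(3) plus Remark \ref{sin_red_dis_comp}: $\mathcal V_{\mathfrak g_\infty}(\tau)$ is a minimum over \emph{all} singular reduced distances of $\mathfrak g_\infty$, which arise from arbitrary regular approximating sequences having nothing to do with the $\mathfrak g_i$, so one only gets $\liminf_i\mathcal V_{\mathfrak g_i}(\tau)\geq\mathcal V_{\mathfrak g_\infty}(\tau)$; without the missing upper semicontinuity you cannot conclude $\mathcal V_{\mathfrak g_\infty}(1/2)\leq\mathcal V_{\mathfrak g_\infty}(1)$, so the constancy of $\mathcal V_{\mathfrak g_\infty}$ is not established.

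The paper avoids this by never passing the \emph{minimized} reduced volume to the limit at two scales. It fixes, for each $i$, a singular reduced distance $l_i$ realizing the minimum at the smaller scale, $\mathcal V_{(\mathfrak g_i)_{r_i}}(1/2)=\mathcal V_{(\mathfrak g_i)_{r_i},l_i}(1/2)$, and at the larger scale uses only the one-sided inequality $\mathcal V_{(\mathfrak g_i)_{r_i}}(1)\leq\mathcal V_{(\mathfrak g_i)_{r_i},l_i}(1)$, which holds by definition of the minimum. Combined with the hypothesis and monotonicity this yields $\mathcal V_{(\mathfrak g_i)_{r_i},l_i}(1/2)-\mathcal V_{(\mathfrak g_i)_{r_i},l_i}(1)<\delta_i$, and since $l_i\to l_\infty$ with the fixed-$l$ reduced volumes converging, one gets $\mathcal V_{\mathfrak h,l_\infty}(1/2)=\mathcal V_{\mathfrak h,l_\infty}(1)$; the rigidity of Lemma \ref{lem:sin_red_vol_prop}(2) then makes the limit a shrinking soliton with $l_\infty(\cdot,-1)$ a normalized soliton function, and Lemma \ref{lem:soliton_density} shows the limit basepoint has minimal density, i.e.\ lies in the spine. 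This last point also matters for your closing step: $\varepsilon$-closeness in Definition \ref{distance} matches basepoints ($F(q)=x_i$), so ``based at a suitable point of its spine'' is not enough — you need the limit basepoint itself to be in $S(N,h)$, which is exactly what the soliton-function identification of $l_\infty$ delivers. The remaining parts of your outline (compactness from non-collapsing and the curvature bound, and the observation that Definition \ref{distance} only requires finitely many derivatives on a fixed parabolic region) are fine and match the paper.
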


\begin{proof}
Fix $\varepsilon, \kappa>0$, $\gamma\in (0,1)$ and $B<+\infty$. Suppose there is a sequence $\mathfrak g_i=(M_i,g_i(t),x_i)_{t\in (-2,0)}\in \mathcal{RF}(n,B)$ that is $\kappa$ non-collapsed below scale $1$, and sequences $\delta_i\searrow 0$, $\delta_i<1/2$, and  $r_i\in (0,1]$  such that 
\begin{equation}
\mathcal V_{\mathfrak g_i} (\delta_i r_i^2)- \mathcal V_{\mathfrak g_i}(r_i^2) <\delta_i, \label{small_energy}
\end{equation}
but $\mathfrak g_i$ is not $(\varepsilon, r_i,0,B)$-selfsimilar.

The curvature bound of the class $\mathcal{RF}(n,B)$ and the $\kappa$ non-collapsing \linebreak assumption imply that a subsequence of $(\mathfrak g_i)_{r_i}$ converges to a complete pointed Ricci flow $\mathfrak h=(N,h(t),q)_{t\in (-2,0)}\in \mathcal{RF}(n,B)$ in the smooth Cheeger--Gromov topology.

Let $l_i$ be a singular reduced distance function of $(\mathfrak g_i)_{r_i}$ that realizes the reduced volume at scale $1/2$, namely 
$$\mathcal V_{(\mathfrak g_i)_{r_i}}( 1/2 )=\mathcal V_{(\mathfrak g_i)_{r_i} , l_i }(1/2).$$
From the estimates of Naber \cite{Naber10}, a subsequence of $l_i$ converges to a singular reduced distance $l_\infty$ of $\mathfrak h$, thus a subsequence of $\mathcal V_{ (\mathfrak g_i)_{r_i} }(1/2)$ converges to $\mathcal V_{\mathfrak h, l_\infty}(1/2)$. Moreover, for the same reason a subsequence of $\mathcal V_{ (\mathfrak g_i)_{r_i}, l_i }(1)$ converges to $\mathcal V_{\mathfrak h, l_\infty}(1)$.

Hence, from monotonicity and \eqref{small_energy}, it follows that
\begin{equation}
\mathcal V_{\mathfrak h,l_{\infty}}(1)\leq \mathcal V_{\mathfrak h,l_{\infty}}(1/2) \leq \mathcal V_{\mathfrak h,l_{\infty}}(1),
\end{equation}
since by the definition of the singular reduced volume 
$$\mathcal V_{ (\mathfrak g_i)_{r_i} }(1)\leq \mathcal V_{(\mathfrak g_i)_{r_i}, l_i}(1). $$
Thus, $l_\infty$ is a normalized soliton function and $\mathfrak h$ is a shrinking Ricci soliton,  by Lemma \ref{lem:sin_red_vol_prop}. Moreover, the underlying Ricci flow of $\mathfrak h$ satisfies the Type I bound \eqref{rfnb}. This contradicts the assumption that $\mathfrak g_i$ is not $(\varepsilon, r_i,0,B)$-selfsimilar.
\end{proof}
\begin{remark}
Note that in the proof of Lemma \ref{lem:quant_rigidity} we do not use the full strength of assumption (2) and in fact the lemma holds under the weaker hypothesis 
\begin{equation*}
\mathcal V_{\mathfrak g} (\gamma r^2)- \mathcal V_{\mathfrak g}(r^2) <\delta_1, 
\end{equation*}
for some $r\in (0,1]$ and $\gamma\in(0,1)$, and small enough $\delta_1$, with the same proof. However, the current proof uses the forward \cite{ChenZhu06} and backward \cite{Kotschwar10} uniqueness property of complete Ricci flows with bounded curvature in an essential way, namely to assert that in part (2) of Lemma \ref{lem:sin_red_vol_prop} the flow $\mathfrak g$ is a shrinking soliton for all time.

The weaker statement of Lemma \ref{lem:quant_rigidity} is more likely to hold in the incomplete setting, and it suffices for the arguments of this section.
\end{remark}

\begin{lemma}[Almost splitting]\label{lem:almost_splitting}
For every $\varepsilon, \lambda, \mu, \kappa>0$, $\gamma\in (0,1]$  and $ B<+\infty$, there exists $0<\delta_2(\varepsilon, \lambda,\mu,\kappa, B,\gamma)\leq \varepsilon$ such that, if $(M,g(t),x_1)_{t\in (-2,0)}\in\rf$, $g(t)$ is $\kappa$ non-collapsed below scale $1$ for every $t\in(-2,0)$ and for some $r\in(0,1]$
\begin{enumerate}
\item $(M,g(t), x_1)$ is $(\delta_2, r, k,B)$-selfsimilar at $x_1$ with respect to $V\subset T_{x_1} M$, for some $0\leq k\leq n$,
\item $(M,g(t),x_2)$ is $(\delta_2,r,0,B)$-selfsimilar,
\item $d_{g(-r^2)}(x_1,x_2)<\lambda r$, 
\item $d_{g(- \tau)}(x_2, \exp_{g(-\gamma r^2),x_1}(V\cap B_0(2\lambda r))\geq (D+\mu)\sqrt{\tau}$\\ for some $\tau\in [r^2 \mu,r^2 (2-\mu)]$, where $D=D(n,B)$ is the constant given by Theorem \ref{thm:spine_geometry},
\end{enumerate}
then $(M,g(t),x_1)_{t\in (-2,0)}$ is $(\varepsilon, r, k+1,B)$-selfsimilar. 
\end{lemma}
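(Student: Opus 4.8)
The plan is to argue by contradiction and compactness, pushing the hypotheses to a limiting shrinking soliton and using the rigidity of the spine from Theorem~\ref{thm:spine_geometry}. Suppose the conclusion fails: then there is a fixed $\varepsilon>0$ and sequences $\delta_i\searrow 0$, $r_i\in(0,1]$, pointed flows $(M_i,g_i(t),x_1^i)_{t\in(-2,0)}\in\mathcal{RF}(n,B)$ which are $\kappa$ non-collapsed, together with points $x_2^i$ and subspaces $V_i\subset T_{x_1^i}M_i$, satisfying (1)--(4) with $\delta_2$ replaced by $\delta_i$, but such that $(M_i,g_i(t),x_1^i)$ is \emph{not} $(\varepsilon,r_i,k+1,B)$-selfsimilar. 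After replacing each flow by its rescaling $(\mathfrak g_i)_{r_i}$ we may assume $r_i=1$. By the curvature bound of the class $\mathcal{RF}(n,B)$ and $\kappa$ non-collapsing, Hamilton's compactness theorem gives a subsequential limit $(N,h(t),q_1)_{t\in(-2,0)}\in\mathcal{RF}(n,B)$, and $d_{g_i(-1)}(x_1^i,x_2^i)<\lambda$ lets us also extract a limit $q_2\in N$ with $d_{h(-1)}(q_1,q_2)\le\lambda$.

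Next I would identify the limit flow. Since $\delta_i\to 0$, hypothesis (1) says $(\mathfrak g_i)_{1}$ is $\delta_i$-close to a pointed shrinking soliton $\mathfrak h_i=(\tilde N_i,\tilde h_i(t),\tilde q_i)\times(\mathbb R^k,g_{Eucl},0)$ with global curvature bound $B$ and $\tilde q_i$ in the spine; the non-collapsing gives an injectivity radius lower bound at the soliton base point, so by Lemma~\ref{lem:soliton_compactness} a subsequence of $\mathfrak h_i$ converges to a shrinking soliton $\mathfrak h_\infty$ with $q_1$ in its spine, and the closeness estimates of Definition~\ref{distance} force $(N,h(t),q_1)$ to be \emph{isometric} to $\mathfrak h_\infty$; in particular $N=N'\times\mathbb R^k$ with the $\mathbb R^k$-factor the limit $V_\infty$ of the $V_i$, and $q_1\in S(N,h)$. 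Similarly, hypothesis (2) together with $\delta_i\to 0$ and Lemma~\ref{lem:soliton_compactness} (applied to the point $x_2^i$) shows that $q_2\in S(N,h)$ as well: the limiting flow based at $q_2$ is a shrinking soliton and $q_2$ lies in its spine, but since the underlying flow $(N,h(t))$ is the same, $q_2$ realizes the minimum of $\Theta_h$, i.e.\ $q_2\in S(N,h)$.

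Now apply Theorem~\ref{thm:spine_geometry} to $(N,h(t))$: it splits as $(\bar N,\bar h(t))\times\mathbb R^{n-m}$ for some $2\le m\le n$ (or $N$ is Euclidean, a case handled separately and more easily), with $S(N,h)=K\times\mathbb R^{n-m}$ and $\diam_{\bar h(t)}(K)\le D\sqrt{-t}$. The $k$-dimensional factor $\mathbb R^k=V_\infty$ coming from hypothesis (1) is a translational-symmetry subspace, hence $V_\infty\subset\mathbb R^{n-m}$, so $k\le n-m$ and $S(N,h)\supset K\times V_\infty$. Passing hypothesis (4) to the limit: since $x_1^i\to q_1$ and the set $\exp_{g_i(-\gamma),x_1^i}(V_i\cap B_0(2\lambda))$ converges to the affine $k$-plane through $q_1$ in the direction $V_\infty$, the inequality $d_{g_i(-\tau_i)}(x_2^i,\cdot)\ge (D+\mu)\sqrt{\tau_i}$ yields (choosing a subsequence with $\tau_i\to\tau_\infty\in[\mu,2-\mu]$) that $q_2$ has distance at least $(D+\mu)\sqrt{\tau_\infty}$ at time $-\tau_\infty$ from $q_1+V_\infty$. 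But $q_2\in S(N,h)=K\times\mathbb R^{n-m}$ and $q_1\in K\times\mathbb R^{n-m}$, so writing both in coordinates and using the spine diameter bound, their spatial separation orthogonal to $\mathbb R^{n-m}$ is controlled by $\diam_{\bar h(-\tau_\infty)}(K)\le D\sqrt{\tau_\infty}$, while their separation along $\mathbb R^{n-m}$ is realized by a Euclidean translation — and the component of $q_2-q_1$ lying in $V_\infty$ can be absorbed into the base point of the exponential map, so $d_{h(-\tau_\infty)}(q_2, q_1+V_\infty)\le D\sqrt{\tau_\infty}<(D+\mu)\sqrt{\tau_\infty}$, a contradiction.

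\textbf{Main obstacle.} The delicate point is the last step: correctly comparing the metric distance $d_{h(-\tau)}(q_2, q_1+V_\infty)$ with the quantity appearing in hypothesis (4), which is an \emph{extrinsic} distance to the image of a finite ball under the exponential map at a possibly different time slice $-\gamma$. One must check that in the limit this image is exactly the affine $k$-plane $q_1+V_\infty$ (using that $V_\infty$ is a genuine isometric splitting direction, so geodesics in those directions are straight lines, independent of the time slice), and that the $V_\infty$-component of the displacement $q_2-q_1$ is captured by enlarging $\lambda$ appropriately in the ball $B_0(2\lambda)$ — this is why the radius $2\lambda r$ rather than $\lambda r$ appears in (4). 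Making this bookkeeping precise, and handling the degenerate case $k=n-m$ (where $K$ is a point and $q_1=q_2$ up to translation) and the Euclidean case, is where the real work lies; everything else is a routine compactness-and-contradiction scheme of the kind already used in Lemma~\ref{lem:quant_rigidity}.
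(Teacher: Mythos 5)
Your overall scheme --- contradiction, rescaling, compactness via Lemma \ref{lem:soliton_compactness}, identification of the two limit base points $q_1,q_2$ as spine points of one and the same limit soliton, and then a clash between hypothesis (4) and the spine diameter bound of Theorem \ref{thm:spine_geometry} --- is the same as the paper's. However, there is a genuine gap at the final step. After applying Theorem \ref{thm:spine_geometry} you allow the limit to split a factor $\mathbb R^{n-m}$ with $k\le n-m$ and then claim $d_{h(-\tau_\infty)}(q_2,\,q_1+V_\infty)\le D\sqrt{\tau_\infty}$, arguing that the component of $q_2-q_1$ along $V_\infty$ can be absorbed into the base point. This ignores the component of $q_2-q_1$ lying in the Euclidean spine directions \emph{orthogonal} to $V_\infty$, which is present exactly when $n-m>k$. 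That component is controlled only by (roughly) $\lambda$, coming from hypothesis (3), and since the Euclidean factor of the soliton flow is static it does not decay like $\sqrt{\tau}$; one only gets $d_{h(-\tau_\infty)}(q_2,\,q_1+V_\infty)\le\sqrt{D^2\tau_\infty+\lambda^2}$, which for $\lambda$ large relative to $\mu$ and $D$ does not contradict the lower bound $(D+\mu)\sqrt{\tau_\infty}$ from hypothesis (4). So in the case $n-m>k$ your argument produces no contradiction at all.

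The missing idea is precisely the step the paper inserts here: the negation of the conclusion must be fed back into the limit analysis. If the limit soliton split $k+1$ or more Euclidean factors, then --- since $q_1$ lies in its spine and the global curvature bound $B$ passes to the limit --- the smooth convergence would make $(M_i,g_i(t),x_1^i)$ $(\varepsilon,r_i,k+1,B)$-selfsimilar for all large $i$, contradicting the choice of the contradiction sequence. Hence the transverse factor $\tilde N$ splits no Euclidean factor, which forces $n-m=k$, $S(N,h)=K\times\mathbb R^k$ with $\diam_{\tilde h(-\tau)}(K)\le D\sqrt{\tau}$, and only then does the estimate $d_{h(-\bar\tau)}(q_2,\{\tilde q_1\}\times\mathbb R^k)\le D\sqrt{\bar\tau}$ hold and collide with hypothesis (4). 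Relatedly, your ``main obstacle'' paragraph flags $k=n-m$ as a degenerate case in which $K$ is a point; this is backwards: $k=n-m$ is the only case that actually survives, $K$ need not be a point there, and the genuinely problematic case $k<n-m$ is the one your proposal does not address.
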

\begin{proof}
Fix $\varepsilon, \lambda, \mu, \kappa>0$,  $\gamma\in(0,1]$ and $B<+\infty$,  as in the statement of the theorem. Suppose there are sequences $\delta_i\searrow 0$ and $r_i\in(0,1]$, and a sequence of Ricci flows $(M_i,g_i(t))_{t\in (-2,0)}\in \mathcal{RF}(n,B)$, $\kappa$ non-collapsed below scale $1$, satisfying:
\begin{enumerate}
\item $(M_i, g_i (t), x_1^i)$ is $(\delta_i, r_i, k,B)$-selfsimilar with respect to $V_i\subset T_{x_1^i} M_i$,
\item $(M_i, g_i (t), x_2^i)$ is $(\delta_i, r_i, 0,B)$-selfsimilar,
\item $d_{g_i(-r_i^2)} (x_1^i, x_2^i) < \lambda r_i$,
\item $d_{g_i(-\tau_i)}(x_2^i, \exp_{g_i(-\gamma r_i^2), x_1^i}(V_i\cap B_0(2\lambda r_i)) ) \geq (D+\mu)\sqrt{\tau_i}$ for some \linebreak $\tau_i\in [r_i^2 \mu,r_i^2 (2-\mu)]$,
\end{enumerate}
but such that $(M_i, g_i(t),x_1^i)$ is not $(\varepsilon, r_i, k+1,B)$-selfsimilar.

Since $\delta_i\searrow 0$,  assumption (1) above and Lemma \ref{lem:soliton_compactness} imply that we may assume, by passing to subsequence if necessary, that $(M_i, r_i^{-2}g_i(r_i^2 t), x_1^i)$ converges in the smooth pointed Cheeger--Gromov topology to a shrinking Ricci soliton
 $$(N,h(t),q_1)_{t\in(-\infty,0)} = (\tilde N,\tilde h(t),\tilde q_1) \times (\mathbb R^k, g_{Eucl},0).$$
with $q_1\in S(N, h)$, which satisfies $\sup_N |\riem(h(-1))|_{h(-1)} \leq B$.

Moreover, since $(M_i, g_i(t),x_1^i)$ is not $(\varepsilon, r_1, k+1,B)$-selfsimilar, it follows that $(\tilde N,\tilde h(t))$ does not split any Euclidean factors. Then, by Theorem \ref{thm:spine_geometry},  $S(N,h)=K\times \mathbb R^k$, where $K\subset \tilde N$ is compact and satisfies
\begin{equation}\label{diam_K}
\diam_{\tilde h(-\tau)} (K)\leq D \sqrt{\tau},
\end{equation}
 for every $\tau\in (0,+\infty)$.
 
Similarly, we may assume that $(M_i, r_i^{-2}g_i(r_i^2 t), x_2^i)$ converges to a shrinking Ricci soliton $(\hat N,\hat h(t),\hat q)$, with $\hat q\in S(\hat N, \hat h)$.
 
 Since $d_{g_i(-r_i^2)}(x_1^i,x_2^i) <\lambda r_i$, the flows $(N, h(t))$ and $(\hat N,\hat h( t))$ are isometric, by the uniqueness of smooth limits, so from now on we will identify them. In particular, we identify $\hat q$ with $q_2\in N$. 
 
  Then, since $q_1,q_2\in S(N,h)=K\times \mathbb R^k$, let $q_1=(\tilde q_1,0)$ and  $q_2=(\tilde q_2,v)$, $v\in \mathbb R^k$.
 
 Now, if $\Phi_i: B_{h(-1)}(q_1, R_i) \rightarrow M_i$, where $R_i\rightarrow+\infty$, are diffeomorphisms associated to the convergence, then
 $$\Phi_i^{-1} ( \exp_{g_i(-\gamma r_i^2),x_1^i}(V_i )  ) \rightarrow \{\tilde q_1 \} \times \mathbb R^k,$$
 smoothly and uniformly on compact sets. Moreover, $\Phi_i^{-1}( x_2^i) \rightarrow q_2\in N$ and $\tau_i\rightarrow \bar\tau$, up to subsequence.
 
Since $q_2=( \tilde q_2,v)\in K\times \mathbb R^k$,  by \eqref{diam_K} we conclude that 
\begin{equation}\label{eqn:contradicted}
d_{h(-\bar\tau)}(q_2, \{\tilde q_1\}\times \mathbb R^k) = d_{\tilde h(-\bar \tau)} (\tilde q_1,\tilde q_2)\leq D\sqrt{\bar \tau},
\end{equation}
since the splitting $N=\tilde N\times\mathbb R^k$ is isometric.

On the other hand, $d_{g_i(-\tau_i)}(x_2^i, \exp_{g_i(-\gamma r_i^2), x_1^i}(V_i\cap B_0(2\lambda r_i)) ) \geq (D+\mu)\sqrt{\tau_i}$ implies that 
$$d_{h(-\bar\tau)}(q_2, \{\tilde q_1\}\times \mathbb R^k  ) \geq (D+\mu)\sqrt{\bar\tau},$$
 which contradicts \eqref{eqn:contradicted}.
\end{proof}

\begin{lemma}[Line-up lemma]\label{lem:line_up}
Let $\mathfrak g_x:=(M,g(t),x)_{t\in(-2,0)}\in\rf$ such that $g(t)$ is $\kappa$ non-collapsed below scale $1$ for every $t\in(-2,0)$. Then, for every $\lambda,\mu,\nu>0$ and $\gamma\in(0,1)$ there exists $\delta_3(B,\gamma,\kappa,\lambda,\mu,\nu)>0$ such that if 
\begin{equation}\label{almost_0_similar}
\mathcal V_{\mathfrak g_x} (\delta_3 \bar \tau) - \mathcal V_{\mathfrak g_x} (\bar \tau)<\delta_3,
\end{equation}
for some $\bar\tau\in(0,1]$, then there exists $0\leq k \leq n$ and a $k$-dimensional subspace $V$ of $T_x M$ such that
\begin{enumerate}
\item $\mathfrak g_x$ is $(\nu, \bar\tau^{1/2}, k,B)$-selfsimilar with respect to $V$.
\item The set 
$$ L_{\bar\tau,\delta_3}= \{  y\in M,\;\mathcal V_{\mathfrak g_y} (\delta_3 \bar\tau) - \mathcal V_{\mathfrak g_y} ( \bar\tau) <\delta_3  \}$$
satisfies 
\begin{equation}\label{lineup}
 L_{\bar\tau,\delta_3} \cap B_{g(- \bar\tau)}(x, \lambda \bar\tau^{1/2})  \subset T^{g(-\tau)}_{(D+\mu) \sqrt \tau} \big( \exp_{g(-\gamma \bar\tau),x} (V\cap B_0(2\lambda\bar\tau^{1/2})) \big),
\end{equation}
for every $\tau\in [\mu \bar\tau, (2-\mu) \bar\tau]$, where $D$ is the constant given by Theorem \ref{thm:spine_geometry}. \\ Here $T^g_r(S)$ denotes the $r$-tubular neighbourhood of a set $S$ with respect to the Riemannian metric $g$.
\end{enumerate}
\end{lemma}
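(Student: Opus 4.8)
The plan is to argue by contradiction, exactly in the style of Lemmata \ref{lem:quant_rigidity}, \ref{lem:almost_splitting} above, combining them with the compactness Lemma \ref{lem:soliton_compactness}. Suppose the statement fails: fix $\lambda,\mu,\nu>0$ and $\gamma\in(0,1)$, and take a sequence $\delta_i\searrow 0$, flows $\mathfrak g^i=(M_i,g_i(t),x_i)_{t\in(-2,0)}\in\mathcal{RF}(n,B)$ which are $\kappa$ non-collapsed below scale $1$, and scales $\bar\tau_i\in(0,1]$ with $\mathcal V_{\mathfrak g^i_{x_i}}(\delta_i\bar\tau_i)-\mathcal V_{\mathfrak g^i_{x_i}}(\bar\tau_i)<\delta_i$, but for which no integer $k$ and subspace $V\subset T_{x_i}M_i$ satisfy simultaneously conclusions (1) and (2). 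Rescaling by $\bar\tau_i^{1/2}$ (i.e. passing to $(\mathfrak g^i_{x_i})_{\bar\tau_i^{1/2}}$) we may assume $\bar\tau_i=1$; by Lemma \ref{lem:quant_rigidity} the flows $(\mathfrak g^i_{x_i})_{\bar\tau_i^{1/2}}$ are $(\epsilon_i,1,0,B)$-selfsimilar with $\epsilon_i\to0$, and then by Lemma \ref{lem:soliton_compactness} a subsequence converges in the smooth pointed Cheeger--Gromov topology to the Ricci flow $(N,h(t),q)_{t\in(-\infty,0)}$ of a shrinking Ricci soliton with $\sup_N|\riem(h(-1))|\le B$ and $q\in S(N,h)$.

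By Theorem \ref{thm:spine_geometry} the limit splits isometrically as $(\tilde N,\tilde h(t))\times(\mathbb R^k,g_{Eucl})$ with $S(N,h)=K\times\mathbb R^k$, $K$ compact, $\diam_{\tilde h(-\tau)}(K)\le D\sqrt\tau$, and $\tilde N$ splitting no further Euclidean factor. Let $V_i\subset T_{x_i}M_i$ be the $k$-plane pulled back from $\{0\}\times\mathbb R^k$ under the diffeomorphisms realizing the convergence of $(\mathfrak g^i_{x_i})_{\bar\tau_i^{1/2}}$; this forces conclusion (1) for $\mathfrak g^i_{x_i}$ with parameter $\nu$ once $i$ is large (the soliton $(N,h)$ is a valid witness since $q\in S(N,h)$ and the convergence is smooth on large balls for long time intervals, giving $\nu$-closeness). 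Thus it must be conclusion (2) that fails for every large $i$: after passing to a further subsequence there are $y_i\in L_{\bar\tau_i,\delta_i}\cap B_{g_i(-\bar\tau_i)}(x_i,\lambda\bar\tau_i^{1/2})$ and $\tau_i\in[\mu\bar\tau_i,(2-\mu)\bar\tau_i]$ with $d_{g_i(-\tau_i)}(y_i,\exp_{g_i(-\gamma\bar\tau_i),x_i}(V_i\cap B_0(2\lambda\bar\tau_i^{1/2})))\ge(D+\mu)\sqrt{\tau_i}$. In the rescaled picture the $y_i$ stay in a fixed ball; since $y_i\in L_{\bar\tau_i,\delta_i}$, Lemma \ref{lem:quant_rigidity} applied at $y_i$ (with the non-collapsing preserved under the flow and rescaling) shows $(\mathfrak g^i_{y_i})_{\bar\tau_i^{1/2}}$ is $(\epsilon_i,1,0,B)$-selfsimilar with $\epsilon_i\to0$, so by Lemma \ref{lem:soliton_compactness} a subsequence $\Phi_i^{-1}(y_i)\to q_2\in N$ with $q_2\in S(N,h)=K\times\mathbb R^k$. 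The images $\exp_{g_i(-\gamma\bar\tau_i),x_i}(V_i)$ converge smoothly to $\{\tilde q_1\}\times\mathbb R^k$ where $q=(\tilde q_1,0)$, and $\tau_i/\bar\tau_i\to\bar\tau\in[\mu,2-\mu]$. Writing $q_2=(\tilde q_2,v)$ with $\tilde q_2\in K$, the diameter bound for $K$ gives $d_{h(-\bar\tau)}(q_2,\{\tilde q_1\}\times\mathbb R^k)=d_{\tilde h(-\bar\tau)}(\tilde q_1,\tilde q_2)\le D\sqrt{\bar\tau}$, while the assumed separation passes to the limit as $d_{h(-\bar\tau)}(q_2,\{\tilde q_1\}\times\mathbb R^k)\ge(D+\mu)\sqrt{\bar\tau}$, a contradiction.

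I expect the main obstacle to be bookkeeping rather than a conceptual difficulty: one must verify that the non-collapsing hypothesis of Lemma \ref{lem:quant_rigidity} is available at the \emph{off-center} points $y_i$ and at the rescaled times (non-collapsing below scale $1$ for all $t\in(-2,0)$ does survive the parabolic rescaling by $\bar\tau_i^{1/2}\le1$ and base-point change, but this should be stated carefully), and that the smooth convergence of the exponential images of $V_i$ — which lie inside $M_i$ at time $-\gamma\bar\tau_i$ — to $\{\tilde q_1\}\times\mathbb R^k$ genuinely holds on all compact subsets, uniformly, despite $V_i$ being defined only up to the radius $2\lambda\bar\tau_i^{1/2}$ needed in the statement; the radius $2\lambda$ is exactly what guarantees that the relevant portion of $\{\tilde q_1\}\times\mathbb R^k$ (within distance roughly $\lambda+(D+\mu)(2-\mu)^{1/2}$ of $q_2$) is covered in the limit, so the truncation is harmless. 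A secondary point is ensuring that failure of the conjunction ``(1) and (2)'' for all $(k,V)$ is correctly negated: once (1) is forced to hold for the specific $(k,V_i)$ produced by the limit, (2) must fail for that same pair, which is what licenses extracting the bad points $y_i$; one should note that $k$ is eventually constant along the subsequence because it is the dimension of the Euclidean factor of the (sub-sequential) limit soliton.
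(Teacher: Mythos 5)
Your proposal is essentially correct, but it proves the lemma by a genuinely different route than the paper. The paper's proof contains no new compactness argument at all: it fixes in advance the iterated chain of parameters $a_i(\nu)=\delta_2\circ\cdots\circ\delta_2(\nu)$ from the almost splitting Lemma \ref{lem:almost_splitting}, sets $\delta_3=\delta_1(a_n(\nu),\kappa,B)$ via the quantitative rigidity Lemma \ref{lem:quant_rigidity}, takes $k$ to be the \emph{maximal} integer for which $\mathfrak g_x$ is $(a_{n-k},\bar\tau^{1/2},k,B)$-selfsimilar with respect to some $V$, and then observes that any point $y\in L_{\bar\tau,\delta_3}$ violating the tube inclusion would, by Lemma \ref{lem:almost_splitting} applied to the pair $(x,y)$, upgrade $\mathfrak g_x$ to $(a_{n-(k+1)},\bar\tau^{1/2},k+1,B)$-selfsimilarity, contradicting maximality; this rules out \emph{every} bad point at once, with $\delta_3$ fixed beforehand. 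You instead run a single contradiction-compactness argument: extract the limit soliton via Lemma \ref{lem:quant_rigidity} and Lemma \ref{lem:soliton_compactness}, read off $k$ and $V_i$ from its Euclidean splitting so that conclusion (1) holds for large $i$, and then show that failure of (2) along the sequence contradicts the spine diameter bound of Theorem \ref{thm:spine_geometry}. This effectively reproves Lemma \ref{lem:almost_splitting} inline (your limit argument for the off-center points $y_i$, the convergence of $\exp_{g_i(-\gamma),x_i}(V_i\cap B_0(2\lambda))$ to the Euclidean slice, and the passage of the separation to the limit are exactly the steps of its proof), so your route makes that lemma redundant at the cost of a heavier proof of the line-up statement; the paper's route keeps the compactness confined to Lemmata \ref{lem:quant_rigidity}--\ref{lem:almost_splitting} and reduces the line-up lemma to finite parameter bookkeeping. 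Two small points you should still record: the case where the limit soliton is flat (the Gaussian soliton, to which Theorem \ref{thm:spine_geometry} does not literally apply) must be noted separately, with $S(N,h)=N$ and the tube inclusion holding trivially in the limit since the static flat metric makes the limit distance to the exponential image zero; and Lemma \ref{lem:soliton_compactness} is applied to the witnessing solitons of the $(\epsilon_i,1,0,B)$-selfsimilar flows rather than to the flows $(\mathfrak g^i_{y_i})_{\bar\tau_i^{1/2}}$ themselves, with the two limits identified because $\epsilon_i\to 0$ and the base points stay at bounded distance --- the same compression the paper itself uses in the proof of Lemma \ref{lem:almost_splitting}, so this is bookkeeping rather than a gap.
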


\begin{proof}
 Let $\delta(\nu)=\delta_2 (\nu, \lambda,\mu, \kappa, B, \gamma)\leq\nu$, where $\delta_2$ is given by Lemma \ref{lem:almost_splitting}, and set $a_i(\nu) = \delta\circ \ldots \circ \delta (\nu)\leq \nu$, where the composition is taken \linebreak$i$-times. Then, choose $\delta_3=\delta_1(a_n(\nu), \kappa,B)$, where $\delta_1$ is given by Lemma \ref{lem:quant_rigidity}. Thus, by \eqref{almost_0_similar}, it follows that $\mathfrak g_x$ is $(a_n,\bar \tau^{1/2},0,B)$-selfsimilar.

 Let $k$ be the maximum integer such that $0\leq k \leq n$ and  $\mathfrak g_x$ is $(a_{n-k},\bar\tau^{1/2},k,B)$-selfsimilar with respect to some $V^k\subset T_x M$. 
 
 Suppose that \eqref{lineup} doesn't hold for some $\tau\in [\mu \bar\tau, (2-\mu)\bar\tau]$. Thus, there is $y\in B_{g(- \bar\tau)}(x,\lambda \bar\tau^{1/2})$ with $\mathcal V_{\mathfrak g_y} (\delta_3\bar\tau) - \mathcal V_{\mathfrak g_y} ( \bar\tau)<\delta_3$ but 
$$d_{g(-\tau)}(y, \exp_{g(-\gamma \bar\tau),x }(V \cap B_0(2\lambda \bar\tau^{1/2})))\geq (D+\mu)\sqrt\tau.$$
By Lemma \ref{lem:quant_rigidity} it is also true that $\mathfrak g_y$ is $(a_n,\bar \tau^{1/2},0,B)$-selfsimilar. It then follows by Lemma \ref{lem:almost_splitting} that $\mathfrak g_x$ is $(a_{n-(k+1)},\bar\tau^{1/2},k+1,B)$-selfsimilar, which is a contradiction.
\end{proof}
\begin{remark}
Although the arguments in Lemmata \ref{lem:almost_splitting} and \ref{lem:line_up} are very similar to other instances of quantitative stratification \cite{CheegerNaber13a, CheegerNaber13b, ChHasNab13,ChHasNab15,BreinerLamm14}, it is interesting to point out how Lemma \ref{lem:line_up} differs. 

In \cite{CheegerNaber13a, CheegerNaber13b, ChHasNab13,ChHasNab15,BreinerLamm14} the selfsimilar points line up close to a lower dimensional subspace. Taking the analogy to the Ricci flow naively, one might expect that selfsimilar points will tend to line up around a lower dimensional submanifold. However, this is certainly not true for the Ricci flow, as the example of the standard Ricci flow on the cylinder $\mathbb S^2\times \mathbb R$, becoming singular at $t=0$, shows: there, \textit{every} point is selfsimilar, but the diameter of the $\mathbb S^2$ factor is small only for times near $t=0$. This example illustrates that a statement like that of Lemma \ref{lem:line_up} is more likely to hold.
\end{remark}

\subsection{Energy decomposition.}\label{subsec:energy_decomp} Let $(M,g(t))_{t\in (-2,0)}$ be a complete Ricci flow with bounded curvature satisfying:
\begin{itemize}
\item $|\riem(g(t))|_{g(t)}\leq B/|t|$ on $M\times (-2,0)$,
\item $g(t)$ is $\kappa$ non-collapsed below scale $1$.
\end{itemize}

For every $x\in M$ and $0<\tau_1<\tau_2\leq 1$ define 
\begin{eqnarray*}
\mathcal W_{\tau_1,\tau_2} (x)= \mathcal V_{\mathfrak g_{x}}(\tau_1) - \mathcal V_{\mathfrak g_{x}}(\tau_2) \geq 0.
\end{eqnarray*}

Let $\alpha,\gamma \in (0, 1)$ and $\delta>0$, and set $\tau_i = \gamma^i \alpha$. Then, for every $x\in  M$ define the sequence $$T(x):=(T_1(x), T_2(x),\ldots\;) \in \{0,1\}^\mathbb N$$ as
\begin{equation*}
T_i(x):= \left\{
\begin{array}{ll}
1, & \mathcal W_{\delta \tau_{i-1},\tau_{i-1}} (x) \geq \delta, \\  
0, & \mathcal W_{\delta \tau_{i-1},\tau_{i-1}}  (x) < \delta.
\end{array}
\right.
\end{equation*}

Now, given any $\textbf a=(a_1,\ldots, a_j)\in \{0,1\}^j$, for some integer $j\geq 1$, define $E_{\textbf a}\subset M$ as follows:
$$ E_{\textbf a}=\{x \in  M,\; T_i(x) =a_i\quad \mathrm{for\; every}\quad 1\leq i \leq j\}.$$

\subsection{Quantitative differentiation.}\label{subsec:quant_diff}
A priori there are $2^j$ sets of the form $E_{\textbf a}$, for $\textbf a\in\{0,1\}^j$. We will see below that there is in fact a much smaller number of such sets, which grows polynomially in $j$.

 Let $m\geq 1$ be the minimum integer so that the intervals $[\gamma^{m(i-1)}\alpha, \delta \gamma^{m(i-1)}\alpha)$, for all integers $i\geq 1$, are disjoint. Namely $m= \big\lceil\frac{\log \delta}{\log \gamma}\big\rceil$. Since
\begin{equation*}
\sum_{i=1}^\infty \mathcal W_{\delta\gamma^{m(i-1)}\alpha,\gamma^{m(i-1)}\alpha} (x)\leq \Theta_g(x) -\mathcal V_{\mathfrak g_x}(\alpha) \leq 1,
\end{equation*}
it follows that the number of non-negative integers $i$ for which 
$$\mathcal W_{\delta\gamma^{m(i-1)} \alpha,\gamma^{m(i-1)}\alpha} (x)\geq\delta$$ 
is at most $\lfloor 1/\delta \rfloor$, hence the number of integers $i$ for which
$$\mathcal W_{\delta\gamma^{i-1} \alpha,\gamma^{i-1}\alpha} (x)\geq\delta$$ 
is at most $m\lfloor1/\delta \rfloor$.

Thus, for each $x\in M$,  $T_i(x)=1$ for at most $K(\delta,\gamma)=m\lfloor 1/\delta \rfloor$ values of $i$. This implies that for $j\geq K$ there are only 
\begin{equation}
\left(  
\begin{array}{cc}
j \\
K
\end{array}
\right) \leq j^K
\end{equation}
disjoint sets $E_{\textbf a}$ for $\textbf a\in \{0,1\}^j$. Thus, for any $j\geq 1$ there are at most $2j^K$ disjoint such subsets.
 
\subsection{Covering lemma.} 
Let $(M,g(t))_{t\in (-2,0)}$ be a complete Ricci flow belonging to the class $\mathcal C(n,B,\kappa_0,\kappa_1)$. 

\begin{lemma}\label{lem:vol_control}
Given $\alpha\leq 1$, there exists a $\kappa_2(\alpha,B,\kappa_0)>0$  such that for every $x\in M$, and $r\leq \gamma^{l/2}$, $\tau_l=\gamma^l\alpha$ for any $l\geq 0$:
\begin{equation}\label{eqn:volume_bounds}
\kappa_2 r^n \leq  \vol_{g(-\tau_l)}(B_{g(-\tau_l)}(x,r)) \leq \kappa_1 r^n.
\end{equation}
\end{lemma}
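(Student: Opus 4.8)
The plan is to derive both inequalities in \eqref{eqn:volume_bounds} by verifying the hypotheses of the non-collapsing condition (2) and the non-inflation condition (3) in the definition of the class $\mathcal{C}(n,B,\kappa_0,\kappa_1)$, applied at the time $-\tau_l$ and scale $r$. The key point is that the scale-invariant quantity $r$ and the time $-\tau_l = -\gamma^l \alpha$ must be compatible with the scale-$1$ normalizations appearing in those two conditions, which is exactly why the constraints $r \le \gamma^{l/2}$ (equivalently $r^2 \le \tau_l/\alpha \le \tau_l$, using $\alpha \le 1$) and $\alpha \le 1$ are imposed.

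For the upper bound, I would apply condition (3): I need $R \le c(n)B/(t-\bar t)$ on $B_{g(-\tau_l)}(x,r)$ for all $\bar t \in [-\tau_l - r^2, -\tau_l]$. By the Type I bound (1) together with $|R(g)| \le c(n)|\riem(g)|_g$, we have $R \le c(n)B/|\bar t|$ everywhere; since $\bar t \le -\tau_l$ and $t = -\tau_l$, one has $|\bar t| = -\bar t \ge -\bar t + (t - t) = t - \bar t$ when $\bar t \le t$... more carefully: $t - \bar t = -\tau_l - \bar t \le -\bar t = |\bar t|$, so $c(n)B/|\bar t| \le c(n)B/(t-\bar t)$ fails in the wrong direction, so instead I note $|\bar t| \ge \tau_l \ge r^2 \ge t - \bar t$ only when $t-\bar t \le r^2$, which holds since $\bar t \ge -\tau_l - r^2$. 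Thus $R \le c(n)B/|\bar t| \le c(n)B/r^2 \cdot (r^2/|\bar t|)$; and since $t - \bar t \le r^2 \le \tau_l \le |\bar t|$ we get $c(n)B/|\bar t| \le c(n)B/(t-\bar t)$ only if $|\bar t| \ge t - \bar t$, which is true. Also I need $t - r^2 > -2$, i.e. $-\tau_l - r^2 > -2$, which holds since $\tau_l \le \alpha \le 1$ and $r \le 1$. Hence condition (3) gives $\vol_{g(-\tau_l)}(B_{g(-\tau_l)}(x,r)) \le \kappa_1 r^n$, provided $r \le 1$, which follows from $r \le \gamma^{l/2} \le 1$.

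For the lower bound, I would apply condition (2): I need $R \le r^{-2}$ on $B_{g(-\tau_l)}(x,r)$ and $r \le 1$. From (1) and $|R| \le c(n)|\riem|$ we have $R \le c(n)B/\tau_l = c(n)B/(\gamma^l\alpha)$ on all of $M$ at time $-\tau_l$. This is not directly bounded by $r^{-2}$; the fix is the standard one: apply the non-collapsing at a \emph{rescaled} scale. Set $\rho = \min\{r, (c(n)B)^{-1/2}\sqrt{\tau_l}\}$; then $R \le c(n)B/\tau_l \le \rho^{-2}$ on $B_{g(-\tau_l)}(x,\rho)$, and $\rho \le r \le 1$, so condition (2) yields $\vol_{g(-\tau_l)}(B_{g(-\tau_l)}(x,\rho)) \ge \kappa_0 \rho^n$. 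Since $\rho \ge \min\{r, (c(n)B)^{-1/2}\sqrt{\gamma^l\alpha}\}$ and $r \le \gamma^{l/2}$ gives $r^2 \le \gamma^l \le \gamma^l\alpha/\alpha$, so $\sqrt{\tau_l} = \sqrt{\gamma^l\alpha} \ge \sqrt{\alpha}\, r$, hence $\rho \ge c \cdot r$ with $c = \min\{1, (c(n)B)^{-1/2}\sqrt{\alpha}\}$ depending only on $\alpha, B$. Using volume comparison (or simply $B_{g(-\tau_l)}(x,r) \supset B_{g(-\tau_l)}(x,\rho)$) we conclude $\vol_{g(-\tau_l)}(B_{g(-\tau_l)}(x,r)) \ge \vol_{g(-\tau_l)}(B_{g(-\tau_l)}(x,\rho)) \ge \kappa_0 \rho^n \ge \kappa_0 c^n r^n =: \kappa_2 r^n$, with $\kappa_2 = \kappa_2(\alpha, B, \kappa_0)$.

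The main obstacle — really the only subtle point — is the bookkeeping that matches the hypothesis scales: the conditions defining $\mathcal{C}(n,B,\kappa_0,\kappa_1)$ are stated with scale-$1$ normalizations and the curvature bound is Type I (blowing up at $t=0$), so one must carefully choose the auxiliary scale $\rho$ in the non-collapsing application and verify the time-interval constraint $t - r^2 > -2$ in the non-inflation application. Once these compatibilities are checked, both bounds follow directly; the constant $\kappa_2$ absorbs the loss from passing between scales $r$ and $\rho$ and depends only on $\alpha$, $B$, $\kappa_0$ as claimed. I would also remark that the hypothesis $r \le \gamma^{l/2}$ is used precisely to ensure $r^2 \le \tau_l/\alpha$, which is what makes $\rho$ comparable to $r$ uniformly in $l$.
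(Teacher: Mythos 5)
Your proposal is correct and follows essentially the same route as the paper: the lower bound comes from applying the $\kappa_0$ non-collapsing condition at a comparable smaller scale ($\rho\geq c(\alpha,B)\,r$, the paper's $\zeta r$ with $\zeta=(\alpha/B)^{1/2}$), and the upper bound is a direct application of the $\kappa_1$ non-inflating condition, whose scalar-curvature hypothesis you verify explicitly via $t-\bar t\leq|\bar t|$ (the paper simply asserts this step). The momentary ``fails in the wrong direction'' detour in your upper-bound check resolves correctly, so there is no gap.
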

\begin{proof}
We will first prove the lower bound. The curvature bound of the class $\mathcal C(n,B,\kappa_0,\kappa_1)$ implies that for every $l\geq0$
$$|\riem(g(-\tau_l)) |_{g(-\tau_l)} \leq \frac{B}{\gamma^l \alpha},$$
hence the $\kappa_0$ non-collapsing property implies that for every scale $r$ small enough so that $\frac{B}{\gamma^l \alpha}\leq \frac{1}{r^2}$ 
\begin{equation}\label{eqn:noncollapsing}
\vol_{g(-\tau_l)}( B_{g(-\tau_l)} (x, r) ) \geq \kappa_0 r^n.
\end{equation}
Note that for every $r\leq \gamma^{l/2}$ 
\begin{equation*}
\frac{B}{\gamma^l \alpha}  \leq \frac{ B}{r^2 \alpha}=\frac{1}{(\zeta r)^{2}},
\end{equation*}
where $\zeta=\big(\frac{\alpha}{ B}\big)^{1/2}\leq 1$, since we can assume without loss of generality that $B>1$. 

Thus, we may now use \eqref{eqn:noncollapsing} to estimate, for every $r\leq \gamma^{l/2}$,
\begin{equation*}
\vol_{g(-\tau_l)}(B_{g(-\tau_l)}(x, r)) \geq \vol_{g(-\tau_l)}(B_{g(-\tau_l)}(x, \zeta r))\geq \kappa_0 \zeta^n r^n.
\end{equation*}
The lower bound of the claim now follows by putting $\kappa_2=\zeta^n\kappa_0$.

The upper bound directly follows from the $\kappa_1$ non-inflating property since $\gamma^{l/2}\leq 1$.
\end{proof}

\begin{lemma}[Covering lemma]\label{lem:covering}
There are $\alpha(B,\gamma), \delta(B,\gamma,\kappa_0,\eta)>0$ so that the construction of Sections \ref{subsec:energy_decomp} and \ref{subsec:quant_diff} satisfies the following: there exist $C_1,C_2<+\infty$  and $\beta(B,\gamma) \in(0,1/2)$ such that, for every $x\in M$, any $\mathbf a \in \{0,1\}^j$, $j\geq 1$, the set $S^k_{\eta,\gamma^{j-1}\alpha}\cap E_{\mathbf a}\cap B_{g(-\alpha)}(x,\beta)$ is covered by at most
$$C_1 (C_2\gamma^{-k})^j$$
$g(-\tau_{j-1})$ metric balls of radius $r_{j-1}$ centered at $S^k_{\eta,\gamma^{j-1}\alpha}$, where $\tau_j=\gamma^j \alpha$ and $r_j=\gamma^{j/2}\beta$.

In particular, $C_1$ depends only on $n,\kappa_0, \kappa_1,B,\gamma$ and $C_2$ only on $n$.
\end{lemma}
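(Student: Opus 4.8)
The plan is to iterate the Line-up Lemma (Lemma \ref{lem:line_up}) scale-by-scale, building the cover inductively on $j$. The key point is that the definition of $E_{\mathbf a}$ splits the scales $\tau_1,\dots,\tau_j$ into those where $a_i=1$ and those where $a_i=0$. At a scale with $a_i=0$ we have $\mathcal W_{\delta\tau_{i-1},\tau_{i-1}}(y)<\delta$ for all $y\in E_{\mathbf a}$, i.e. the reduced volume is almost constant on a definite interval, so Lemma \ref{lem:line_up} applies with $\bar\tau=\tau_{i-1}$ and $\delta_3=\delta$ (choosing $\delta$ to be the $\delta_3$ produced by Lemma \ref{lem:line_up} for appropriate $\lambda,\mu,\nu$); at such a scale the selfsimilar points of $E_{\mathbf a}$ in a ball of radius $\lambda\tau_{i-1}^{1/2}$ are trapped in a tubular neighbourhood of $\exp_{x}(V\cap B_0(2\lambda\tau_{i-1}^{1/2}))$ for $V$ of some dimension $k'$. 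Since we are covering $S^k_{\eta,\gamma^{j-1}\alpha}$, by definition the flow at these points is \emph{not} $(\eta,s,k+1,B)$-selfsimilar for any $s\in[\tau_{j-1}^{1/2},1]$, so the dimension $k'$ of the splitting detected by Lemma \ref{lem:line_up} satisfies $k'\le k$ (taking $\nu<\eta$). Thus at a "$0$-scale" the set refines to a cover by balls adapted to a $\le k$-dimensional affine piece: passing from radius $r_{i-1}$ to radius $r_i=\gamma^{1/2}r_{i-1}$ costs a factor $C_2\gamma^{-k}$ in the number of balls (the volume of a $k$-dimensional tube of length $r_{i-1}$ and thickness $r_i$ in the $n$-manifold, divided by the volume of an $r_i$-ball, using the two-sided volume bounds of Lemma \ref{lem:vol_control}).

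At a scale with $a_i=1$ there is no geometric control, so I would simply refine each radius-$r_{i-1}$ ball by a bounded number of radius-$r_i$ balls; by the two-sided Bishop-type volume bounds of Lemma \ref{lem:vol_control} (lower bound $\kappa_2 r^n$, upper bound $\kappa_1 r^n$, valid precisely at the dyadic times $\tau_l$ and radii $r\le\gamma^{l/2}$, which is why $\beta<1/2$ is forced) this costs a factor $C_2\gamma^{-n}$, a constant independent of $j$. Crucially, by the quantitative differentiation argument of Section \ref{subsec:quant_diff}, $a_i=1$ for at most $K=K(\delta,\gamma)=m\lfloor 1/\delta\rfloor$ indices $i$, so the total contribution of all "$1$-scales" across the whole iteration is at most $(C_2\gamma^{-n})^K$, which gets absorbed into the constant $C_1$. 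Multiplying the per-scale factors: after $j$ steps the number of balls is at most $(C_2\gamma^{-n})^K\cdot(C_2\gamma^{-k})^{j}\le C_1(C_2\gamma^{-k})^j$, as claimed, with $C_1$ depending on $n,\kappa_0,\kappa_1,B,\gamma$ (through $\kappa_2$, $K$, and the base case at scale $\alpha$) and $C_2$ on $n$ only.

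The main obstacle, and where care is needed, is making the inductive step precise: one starts from a cover of $S^k_{\eta,\gamma^{i-2}\alpha}\cap E_{\mathbf a'}\cap B_{g(-\alpha)}(x,\beta)$ by balls $B_{g(-\tau_{i-1})}(z,r_{i-1})$ centered on the set, and must produce a cover at the next scale using Lemma \ref{lem:line_up} applied \emph{at each center $z$} with $\bar\tau=\tau_{i-1}$. The issue is that Lemma \ref{lem:line_up}(2) controls points in the ball $B_{g(-\bar\tau)}(x,\lambda\bar\tau^{1/2})$ at \emph{some} later time $\tau\in[\mu\bar\tau,(2-\mu)\bar\tau]$, whereas our balls are centered using the metric $g(-\tau_{i-1})$; one must choose $\lambda$ (relative to $\beta$ and $\gamma$) and $\mu$ so that the relevant later time $\tau=\tau_i$ lies in the admissible window and so that $r_{i-1}$-balls in $g(-\tau_{i-1})$ and the tube at time $-\tau_i$ are comparable — this uses the Type I bound to control metric distortion between the nearby times $-\tau_{i-1}$ and $-\tau_i$ (their ratio is $\gamma$, a fixed constant, so distortion is bounded). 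A second subtlety is ensuring the new ball centers can be taken on $S^k_{\eta,\gamma^{i-1}\alpha}$ rather than just near it; this follows because $S^k_{\eta,\cdot}$ is decreasing in $\tau$ and one slightly enlarges radii by a fixed factor, reabsorbed into $C_2$. Once the geometric bookkeeping of radii, times, and the constants $\alpha,\beta,\delta,\lambda,\mu,\nu$ is fixed consistently, the counting is exactly the quantitative-differentiation estimate of Section \ref{subsec:quant_diff}.
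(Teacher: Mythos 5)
Your argument is essentially the paper's proof: induction over the scales $\tau_i$, the rough volume-comparison bound of Lemma \ref{lem:vol_control} (together with Type I distance distortion between $-\tau_{i-1}$ and $-\tau_i$) at the at most $K(\delta,\gamma)$ bad scales, and Lemma \ref{lem:line_up} applied at ball centres lying in $S^k_{\eta,\tau_{i-1}}\cap E_{\tilde{\mathbf a}}$ at the good scales, with $\mu\leq\min\{\eta,\gamma\}$ guaranteeing both that $\tau_i$ lies in the admissible time window and that the detected splitting dimension is at most $k$, so the refinement factor is $C_2(n)\gamma^{-k}$ and the bad-scale factors are absorbed into $C_1$. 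The only point you leave implicit --- that the tube thickness $(D+\mu)\sqrt{\tau_i}$ is actually $O(r_i)$, so the tube can be covered by $\sim\gamma^{-k}$ balls of radius $r_i$ --- is precisely the paper's choice of $\alpha$ with $2D\sqrt{\alpha}<\beta$, which is where the dependence $\alpha=\alpha(B,\gamma)$ originates.
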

\begin{proof}
We prove this by induction. For $j=1$ we only need to estimate the number $P$ of balls $B_{g(-\tau_0)}(y_i, r_0)$, $i=1,\ldots, P$, in a minimal  covering of 
$$S^k_{\eta,\alpha}\cap B_{g(-\alpha)}(x,\beta),$$
where $y_i\in S^k_{\eta,\alpha} \cap B_{g(-\alpha)}(x,\beta)$. Note that $\alpha,\beta$ will be chosen later, depending only on $B$ and $\gamma$.

This number is bounded above by the cardinality $Q$ of a maximal $\beta$-separated at time $t=-\alpha$, subset $\{y_1,\ldots,y_Q\}$ of  $S^k_{\eta,\alpha} \cap B_{g(-\alpha)}(x,\beta)$. 

If $\beta\leq\frac{1}{2}$ we can apply Lemma \ref{lem:vol_control} to estimate
\begin{align*}
Q \kappa_2 (\beta/2)^n&\leq \sum_{i=1}^{Q} \vol_{g(-\alpha)}(B_{g(-\alpha)}(y_i , \beta/ 2)) \\
& \leq \vol_{g(-\alpha)}( B_{g(-\alpha)} (x, 2r  )) \leq \kappa_1 (2 \beta)^n,
\end{align*}
hence $P\leq Q\leq c_0:=\frac{\kappa_1 4^n}{\kappa_2}$.

We proceed to the induction step. Given any $\mathbf a\in \{0,1\}^{j+1}$  denote by $\tilde{\mathbf a}\in \{0,1\}^j$ the vector with $\tilde a_l =a_l$ for every $1\leq l\leq j$.

Now, recall that $\tau_j=\gamma^j \alpha$, $r_j=\gamma^{j/2} \beta$  and  suppose that 
$$S^k_{\eta,\tau_{j-1}}\cap E_{\tilde{\mathbf a}}\cap B_{g(-\alpha)}(x, \beta) \subset \bigcup_{i=1}^N B_{g(-\tau_{j-1})}(z_i, r_{j-1}),$$
 where $z_i\in S^k_{\eta,\tau_{j-1}}\cap E_{\tilde{\mathbf a}}\cap B_{g(-\alpha)}(x, \beta)$.
 
 First, observe that the curvature bound of the class $\mathcal C(n,B,\kappa_0,\kappa_1)$ implies that 
 $$|\riem(g(-\tau))|_{g(-\tau)} \leq \frac{B}{\gamma^j\alpha},$$
 in $M$ for $\tau \in [\tau_j ,\tau_{j-1}]$. Then, standard distance distortion estimates imply that for every $y\in M$ and $r>0$, 
 \begin{equation*}
 B_{g(-\tau_j)}(y,A^{-1}r) \subset B_{g(-\tau_{j-1})}(y,r) \subset B_{g(-\tau_j)}(y,Ar),
 \end{equation*}
 where $A= e^{c(n)B\gamma^{-1}}$.
 
 Thus, each ball of the given cover satisfies 
 $$B_{g(-\tau_{j-1})}(z_i, r_{j-1})\subset  B_{g(-\tau_j)}(z_i, Ar_{j-1}).$$
It follows that the cardinality $L$ of a maximal $r_j$-separated at time $t=-\tau_j$ set $\{w_1,\ldots, w_L\}$ in 
$$S^k_{\eta,\tau_j}\cap E_{\mathbf a} \cap B_{g(-\tau_{j-1})}(z_{i_0}, r_{j-1})\subset S^k_{\eta,\tau_j}\cap E_{\mathbf a} \cap B_{g(-\tau_j)}(z_{i_0}, A r_{j-1})$$ 
can be estimated by
 \begin{align*}
 L \kappa_2 (r_j /2)^n &\leq \sum_{i=1}^L \vol_{g(-\tau_j)}(B_{g(-\tau_j)}(w_i , r_j /2  ) ),\\
 &\leq \vol_{g(-\tau_j)} ( B_{g(-\tau_j)} ( z_{i_0}, A r_{j-1} + r_{j-1}  )   ),\\
 &\leq \kappa_1 (A+1)^n (r_{j-1})^n,
 \end{align*}
where we used again Lemma \ref{lem:vol_control}, assuming that $\beta\leq (1+A)^{-1}$. This provides us with an estimate $L\leq \frac{\kappa_1}{\kappa_2} (A+1)^n 2^n \gamma^{-n}=:c_1 $, $c_1=c_1(n,\alpha,B,\kappa_0,\kappa_1,\gamma)$.

 Thus the set
$$S^k_{\eta,\tau_j} \cap E_{\mathbf a}  \cap B_{g(-\tau_{j-1})}(z_{i_0}, r_{j-1})$$ 
can be covered by at most $c_1 $ balls $B_{g(-\tau_j)}(w_i,r_j)$, with centers in $S^k_{\eta,\tau_j} \cap E_{\mathbf a} \cap B_{g(-\tau_{j-1})}(z_{i_0}, r_{j-1})$.

 At this point it is clear that for the arguments above to go through we need to choose $\beta(B,\gamma)=\min\{1/2,(1+A)^{-1}\}$.
 
The rough estimate above is valid on all scales, and relies on the Type I assumption. On the other hand, if we are on a `good' scale $\tau_{j-1}$, namely a scale on which the flow looks selfsimilar, we can do much better.  

To see this, suppose that $a_j=0$ and let $B_{g(-\tau_{j-1})}(z_{i_0}, r_{j-1})$ be one of the balls in the cover of $S^k_{\eta,\tau_{j-1}}\cap E_{\tilde{\mathbf a}} \cap B_{g(-\alpha)}(x,\beta)$.

We will show that there is a minimal cover of 
$$S^k_{\eta,\tau_j} \cap E_{\mathbf a}\cap B_{g(-\tau_{j-1})}(z_{i_0}, r_{j-1})$$
 by at most $c_2(n) \gamma^{-k}$ balls $B_{g(-\tau_j)} (w_i , r_j)$ with centers $w_i\in S^k_{\eta,\tau_j} \cap E_{\mathbf a}$.

First, observe that  $E_{\mathbf a}\subset E_{\tilde{\mathbf a}}\subset L_{\tau_{j-1},\delta}$, since $a_j=0$, and recall that $z_{i_0} \in E_{\tilde{\mathbf a}}$. 

Chose $\delta=\delta_3(B, \gamma, \kappa_0, \beta/\alpha,\mu,\mu)$ as given by Lemma \ref{lem:line_up}, where $\alpha,\mu$ will be chosen later. 

Lemma \ref{lem:line_up} then implies that
\begin{equation}\label{eq:in_tube}
\begin{aligned}
&E_{\mathbf a} \cap B_{g(-\tau_{j-1})}(z_{i_0}, r_{j-1}) \subset   L_{\tau_{j-1},\delta}\cap B_{g(-\tau_{j-1})}(z_{i_0}, r_{j-1}),\\
&\subset T^{g(-\tau)}_{(D+\mu)\sqrt\tau}(\exp_{g(-\tau_j), z_{i_0}}(V\cap B_0(2r_{j-1})) ) \cap B_{g(-\tau_{j-1})}(z_{i_0}, r_{j-1})&,
\end{aligned}
\end{equation}
for every $\tau \in [\mu \tau_{j-1}, (2-\mu)\tau_{j-1}]$ and some $l$-dimensional subspace $V^l$ of $T_{z_{i_0}} M$. Moreover, $\mathfrak g_{z_{i_0}}$ is $(\mu,\tau_{j-1}^{1/2}, l,B)$-selfsimilar with respect to $V$.

Now, chose $\mu=\min\{\eta,\gamma,D\}$. On the one hand, this choice ensures that $l\leq k$, since $z_{i_0}\in S^k_{\eta, \tau_{j-1}}$. It also ensures that $\tau_j\in [\mu \tau_{j-1}, (2-\mu)\tau_{j-1}]$, thus  estimate \eqref{eq:in_tube} holds for  $\tau=\tau_j$.

Finally, chose $\alpha$ small enough so that $2D \sqrt{\alpha}<\beta$, so that 
\begin{equation}\label{eqn:tube_ineq}
(D+\mu)\sqrt{\tau_j}<r_j/10.
\end{equation}
This implies that there exists $C_2(n)$ and a minimal cover of 
$$S^k_{\eta,\tau_j}\cap E_{\mathbf a} \cap  B_{g(-\tau_{j-1})}(z_{i_0}, r_{j-1}) $$
 with at most $C_2\gamma^{-k}$ balls at time $t=-\tau_j$ of radius $r_j$ centered at $S^k_{\eta,\tau_j}$. 
 
 To construct such cover, first consider a maximal $r_j/4$-separated set in $\exp_{g(-\tau_j),z_{i_0}} ( V\cap B_0(r_{j-1}))$. Then, by \eqref{eqn:tube_ineq}, the $g(-\tau_j)$-balls of radius $r_j/2$ with centers in that set cover $S^k_{\eta,\tau_j}\cap E_{\mathbf a} \cap  B_{g(-\tau_{j-1})}(z_{i_0}, r_{j-1}) $, for $\mu$ small enough (but independent of the  other parameters of the proof). Finally, we can substitute each ball in this cover, with a ball of radius $r_j$ centered at $S^k_{\eta,\tau_j}\cap E_{\mathbf a} \cap  B_{g(-\tau_{j-1})}(z_{i_0}, r_{j-1}) $.
 
 Since there are at most $K$ `bad' scales and for the remaining $j-K$ we have the above more refined covering estimate, we obtain the result setting $C_1=c_0 c_1^K C_2^{-K}\gamma^{-K}$.
\end{proof}

\subsection{Proof of Theorem \ref{thm:vol_estimate}}
Given $B<+\infty$ and $\gamma$ which will be appropriately chosen later, let $\alpha, \beta$ be given by Lemma \ref{lem:covering}. 

It suffices to prove the theorem for $ \tau=\tau_{j-1}$ for all $j\geq 1$,  since for any  $\tau_j< \tau < \tau_{j-1}$, 
\begin{align*}
\vol_{g(- \tau)}\left(S^k_{\eta, \tau}\cap B_{g(-\alpha)}(x,\beta) \right)&\leq \vol_{g(- \tau)}\left(S^k_{\eta, \tau_{j-1}}\cap B_{g(-\alpha)}(x,\beta)\right),\\
&\leq C_\eta  \tau_{j-1}^{\frac{n-k-\eta}{2}},\\
&\leq C_\eta (\gamma^{-1})^{\frac{n-k-\eta}{2}} \tau^{\frac{n-k-\eta}{2}}.
\end{align*}
Now, recall that $M= \bigcup_{\mathbf a\in \{0,1\}^j} E_{\mathbf a}$ and that there are at most $2j^K$ non-empty sets $E_{\mathbf a}$. Moreover, from Lemma \ref{lem:covering}, $S^k_{\eta,\tau_{j-1}} \cap E_{\mathbf a} \cap B_{g(-\alpha)}(x,\beta)$ is covered by at most  $C_1 (C_2 \gamma^{-k})^{j}$ balls at time $t=-\tau_{j-1}$ of radius $r_{j-1}$. Thus, using Lemma \ref{lem:vol_control}:
\begin{align*}
&\vol_{g(-\tau_{j-1})}\left( S^k_{\eta,\tau_{j-1}} \cap B_{g(-\alpha)}(x,\beta) \right) \\
&\leq 2 j^K C_1 (C_2 \gamma^{-k})^{j} \kappa_1 (2 r_{j-1})^n.
\end{align*}
Now, we chose $\gamma=\gamma(n,\eta)$ small enough so that $C_2 \leq \gamma^{-\eta/2}$ and we can also bound $j^K \leq C(K,\eta,\gamma) (\gamma^{j-1})^{-\eta/2}$. The estimate above then becomes
\begin{align*}
\vol_{g(-\tau_{j-1})}\left( S^k_{\eta,\tau_{j-1}} \cap B_{g(-\alpha)}(x,\beta) \right) &\leq  C_\eta \tau_{j-1}^{\frac{n-k-\eta}{2}},
\end{align*}
which is what we want to prove. \qed

\begin{remark}\label{rmk:later_est}
Note that due to the standard lower scalar curvature bound for the Ricci flow $R(g(-\tau))\geq -\frac{n}{2(\tau+2 )}$ and the evolution of the volume under Ricci flow, for every $0<\bar \tau\leq \tau\leq \alpha$
\begin{align*}
\vol_{g(-\bar \tau)}\left(S^k_{\eta,\tau} \cap B_{g(-\alpha)}(x,\beta)  \right) &\leq  c(n) \vol_{g(-\tau)} \left( S^k_{\eta,\tau} \cap B_{g(-\alpha)}(x,\beta) \right ) \\
&\leq  C_\eta  \tau^{\frac{n-k-\eta}{2}}.
\end{align*}
Moreover, if $\Omega=\{ x\in M,\; \sup_{t\in[0,T)}|\riem(g)|_{g}(x,t)<+\infty\}$, then
\begin{equation}
\vol_{g(0)}(S^k_{\eta,\tau} \cap B_{g(-\alpha)} (x,\beta) \cap \Omega) \leq  C_\eta  \tau^{\frac{n-k-\eta}{2}}.
\end{equation}
\end{remark}

\section{Curvature estimates}\label{section:c_estimates}
Let $(M,g(t))_{t\in (-2,0)}$ be a complete Ricci flow satisfying 
\begin{equation}\label{type}
\max_M |\riem(g(t))|_{g(t)}\leq \frac{B}{|t|}.
\end{equation}
for all $t\in (-2,0)$.
 If $(g(t))_{t\in (-2,0)}$ is not singular at $x\in M$, namely there is a neighbourhood $U$ of $x$ such that  
 $$\sup_{U\times (-2,0)} |\riem(g(t))|_{g(t)}<+\infty,$$
we can define the curvature radius at $x$ as
\begin{equation*}
r_{\riem}(x)=\sup\left\{r\leq 1, \; |\riem(g)|\leq r^{-2}\;\textrm{in}\; B_{g(-r^2)}(x,r)\times [-r^2,0] \right\}.
\end{equation*}
If $(g(t))_{t\in (-2,0)}$ is singular at $x$, we define $r_{\riem}(x)=0$.
\subsection{$\varepsilon$-regularity} Below we prove a few $\varepsilon$-regularity results for Ricci flows satisfying \eqref{type}, which imply that high curvature regions are inside one of the sets $S^k_{\varepsilon,\tau}$.

\begin{lemma}[$\varepsilon$-regularity]\label{lem:epsilon}
For every $B<+\infty$ and $\kappa>0$, there exists $\varepsilon(B,\kappa)>0$ such that if a complete Ricci flow $(M,g(t))_{t\in (-2,0)}$ satisfies \eqref{type} and is $\kappa$ non-collapsed below scale $1$, then for every $\tau\in (0,1]$
\begin{equation*}
\{ r_{\riem} < \sqrt\tau \}   \subset   S^{n-2}_{\varepsilon, \tau}.
\end{equation*}
Moreover, if $\dim M=4$ and $g(t)$ has positive isotropic curvature, then for every $\tau\in (0,1]$
\begin{equation*}
\{ r_{\riem} < \sqrt\tau \} \subset  S^1_{\varepsilon,\tau}.
\end{equation*}
\end{lemma}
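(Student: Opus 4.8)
The plan is to argue by contradiction using the compactness machinery already established. Suppose the statement fails for the first inclusion. Then for each $i$ there is a Ricci flow $(M_i,g_i(t))_{t\in(-2,0)}$ satisfying \eqref{type}, $\kappa$ non-collapsed below scale $1$, a scale $\tau_i\in(0,1]$, and a point $x_i\in M_i$ with $r_{\riem}(x_i)<\sqrt{\tau_i}$ but $x_i\notin S^{n-2}_{1/i,\tau_i}$; that is, $\mathfrak g_{x_i}$ \emph{is} $(1/i, s_i, n-1, B)$-selfsimilar for some $s_i\in[\tau_i^{1/2},1]$. After rescaling by $s_i$ (replacing $\mathfrak g_{x_i}$ by $(\mathfrak g_{x_i})_{s_i}$), we may assume $s_i=1$, and then $r_{\riem}(x_i)<\sqrt{\tau_i}\leq s_i=1$ still holds for the rescaled flow (the curvature radius scales linearly and $\sqrt{\tau_i}/s_i\leq 1$). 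So after rescaling we have a sequence that is $(1/i,1,n-1,B)$-selfsimilar with $r_{\riem}(x_i)\leq 1$.

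First I would extract a limit. By the definition of $(1/i,1,n-1,B)$-selfsimilar there are pointed shrinking solitons $\mathfrak h_i=\tilde N_i\times(\mathbb R^{n-1},g_{Eucl},0)$ with $\sup|\riem(h_i(-1))|\leq B$, $q_i\in S(N_i,h_i)$, and $(\mathfrak g_{x_i})_1$ is $(1/i)$-close to $\mathfrak h_i$. On the soliton side, $\tilde N_i$ is one-dimensional or a point, hence by Theorem \ref{thm:spine_geometry} and the subsequent remark each $\mathfrak h_i$ is forced to be flat $\mathbb R^n$ (a shrinking soliton splitting more than $n-2$ Euclidean factors is the Gaussian soliton on $\mathbb R^n$). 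Then the $(1/i)$-closeness of $(\mathfrak g_{x_i})_1$ to flat $\mathbb R^n$ on the fixed parabolic region of Definition \ref{distance} directly contradicts $r_{\riem}(x_i)\leq 1$: closeness in $C^{\lfloor i\rfloor}$ to the flat metric forces $|\riem(g_i)|$ to be as small as we like on $B_{g_i(-1)}(x_i,1/2)\times[-1,0]$ say, in particular $\leq 1$ there, which makes $r_{\riem}(x_i)$ bounded below. Alternatively, one can avoid invoking flatness of the soliton directly: pass to a Cheeger--Gromov limit of $(\mathfrak g_{x_i})_1$ (possible by the curvature bound and $\kappa$ non-collapsing), which by Lemma \ref{lem:soliton_compactness} and the selfsimilarity is a shrinking soliton splitting $\mathbb R^{n-1}$, hence flat; since $r_{\riem}$ is lower semicontinuous under smooth convergence away from curvature blowup and the limit has $r_{\riem}\equiv\infty$, we get $r_{\riem}(x_i)\to\infty$, contradicting $r_{\riem}(x_i)\leq 1$.

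For the $n=4$, positive isotropic curvature case the argument is identical in structure but uses a different soliton classification: a four-dimensional shrinking Ricci soliton with (weakly) positive isotropic curvature that splits a $\mathbb R^2$ factor would be $\Sigma^2\times\mathbb R^2$ with $\Sigma^2$ a two-dimensional shrinker, and the positive isotropic curvature condition on the product forces $\Sigma^2$ to be flat, hence the whole soliton is flat $\mathbb R^4$. (This is the analogue of the fact that PIC rules out neck-like splittings in dimension four beyond the single $\mathbb S^3\times\mathbb R$ direction; PIC is preserved under Ricci flow in dimension four by Hamilton, so it passes to the limit soliton.) Thus assuming $x_i$ is $(1/i,1,2,B)$-selfsimilar after rescaling leads, via the same compactness argument, to a flat limit and the contradiction $r_{\riem}(x_i)\to\infty$.

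The main obstacle I anticipate is the soliton classification step, specifically justifying in the PIC case that a four-dimensional shrinking soliton of the form $\Sigma^2\times\mathbb R^2$ with weakly positive isotropic curvature must be flat, and more basically pinning down exactly which curvature positivity is preserved along the flow and survives in the Cheeger--Gromov limit (so that Lemma \ref{lem:epsilon_weyl}-type input is not circular). The second, more technical, obstacle is making precise the implication ``$(1/i)$-close to flat $\mathbb R^n$ on a fixed parabolic neighbourhood $\Longrightarrow$ $r_{\riem}$ bounded below''; this needs the $C^{l}$ bounds in part (3) of Definition \ref{distance} to control $|\riem|$ not just at $t=-1$ but on the whole interval $[-r^2,0]$ appearing in the definition of $r_{\riem}$, which should follow from the definition of $\eta$-closeness on $[-2+\eta,-\eta]$ together with a short-time Shi-type estimate to reach $t=0$, but deserves a careful line or two. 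Everything else is a routine application of Hamilton's compactness theorem and the semicontinuity facts recorded in Section \ref{section:monotonicity}.
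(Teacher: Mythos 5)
Your overall strategy (contradiction, compactness via Lemma \ref{lem:soliton_compactness}, classification of the limiting soliton) is the same as the paper's, but there is a genuine gap at the decisive step. The closeness of $(\mathfrak g_{x_i})_{s_i}$ to the Gaussian soliton only controls the flow on time intervals of the form $[-2+\eta,-\eta]$ (Definition \ref{distance}), and likewise the Cheeger--Gromov convergence is uniform only on compact subsets of $M\times(-\infty,0)$; by contrast, $r_{\riem}(x_i)$ requires $|\riem|\leq r^{-2}$ on $B_{g(-r^2)}(x_i,r)\times[-r^2,0]$, i.e.\ up to the final, possibly singular, time $0$. Neither of your proposed fixes bridges this: Shi-type estimates produce derivative bounds from curvature bounds but do not propagate a curvature bound forward in time on a local region, and ``lower semicontinuity of $r_{\riem}$'' fails for exactly the same reason, since nothing in the convergence prevents a singularity from forming at $x_i$ precisely at $t=0$ while the flow is nearly flat at times $\leq -\eta$. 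This forward-in-time propagation is the whole point, and the paper closes it by invoking Perelman's pseudolocality theorem \cite{Perelman02}: almost-Euclidean geometry at time $-s_i^2$ near $x_i$ forces curvature control on a smaller ball for a definite forward time, giving $r_{\riem}(x_i)\geq s_i\geq \tau_i^{1/2}$ and the contradiction. Without pseudolocality (or an equivalent local forward curvature estimate) your argument does not conclude.

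In the positive isotropic curvature case your soliton classification step is incorrect as stated: $\mathbb S^2\times\mathbb R^2$ is a four-dimensional gradient shrinking soliton splitting two Euclidean factors with \emph{weakly} positive isotropic curvature and it is not flat, so ``weakly PIC $\Sigma^2\times\mathbb R^2$ must be flat'' is false, and only the weak inequality survives the smooth limit, so you cannot appeal to strict PIC of the limit for free. The paper's argument is different and avoids this: it quotes the classification of Li--Ni--Wang \cite{LiNiWang16}, according to which four-dimensional gradient shrinking Ricci solitons with positive isotropic curvature split at most one Euclidean factor, which contradicts the two-factor splitting of the limit directly, with no flatness claim and no pseudolocality needed at that stage. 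You should either substitute that classification result or justify strictness of the isotropic curvature on the limit soliton, which your write-up does not do.
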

\begin{proof}
To prove the first statement, take a sequence of counterexamples $(M_i,g_i(t))_{t\in (-2,0)}$ satisfying \eqref{type}, and $x_i\in M_i$, $\tau_i \in (0,1]$, $\varepsilon_i\searrow 0$  such that $r_{\riem}(x_i) < \sqrt{\tau_i}$ and $x_i\not\in S^{n-2}_{\varepsilon_i, \tau_i}$.

Thus, the pointed flows $\mathfrak g_i=(M_i, g_i (t), x_i)_{t\in (-2,0)}$ are $(\varepsilon_i, s_i, n-1,B)$-selfsimilar, for some $s_i\in [\tau_i^{1/2},1]$. By Lemma \ref{lem:soliton_compactness}, a subsequence of $(\mathfrak g_i)_{s_i}$ converges to a shrinking Ricci soliton that splits at least $n-1$ Euclidean factors. The only such soliton is the Gaussian shrinking soliton. By Perelman's pseudolocality theorem  \cite{Perelman02} we conclude that $r_{\riem}(x_i) \geq s_i \geq \tau_i^{1/2}$ for large $i$, which is a contradiction.

The proof of the second statement is similar, with the difference that the limiting soliton now splits at least two Euclidean factors and has positive isotropic curvature. However, four dimensional gradient shrinking Ricci solitons with positive isotropic curvature split at most one Euclidean factor, by \cite{LiNiWang16}, which is a contradiction.
\end{proof}

Under an additional bound on the Weyl curvature $W$, we can improve Lemma \ref{lem:epsilon} as follows.

\begin{lemma}[$\varepsilon$-regularity under Weyl curvature bound]\label{lem:epsilon_weyl}
Given $B<+\infty$ and $\kappa>0$, there exists  $\varepsilon(B,\kappa)>0$ such that if  for some $x\in M$ and $0<r\leq 1$ a complete Ricci flow $(M,g(t))_{t\in (-2,0)}$ satisfies \eqref{type}, it is $\kappa$ non-collapsed below scale $1$, and
\begin{enumerate}
\item $(M,g(t),  x)_{t\in (-2,0)}$ is $(\varepsilon, r, 2,B)$-selfsimilar,
\item $r^2 |W(g(-r^2))|_{g(-r^2)}<\varepsilon$ in $B_{g(-r^2)}( x, \varepsilon^{-1} r)$,
\end{enumerate}
 then $r_{\riem} (x) \geq  r$. 
\end{lemma}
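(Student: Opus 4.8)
The plan is to argue by contradiction via a compactness/limit argument, exactly in the spirit of the proof of Lemma \ref{lem:epsilon}, but exploiting the extra Weyl bound to pin down the limiting soliton. Suppose the statement fails: then there are Ricci flows $(M_i,g_i(t))_{t\in(-2,0)}$ satisfying \eqref{type}, $\kappa$ non-collapsed below scale $1$, points $x_i$ and scales $r_i\in(0,1]$ such that $(M_i,g_i(t),x_i)$ is $(\varepsilon_i,r_i,2,B)$-selfsimilar with $\varepsilon_i\searrow 0$, $r_i^2|W(g_i(-r_i^2))|<\varepsilon_i$ on $B_{g_i(-r_i^2)}(x_i,\varepsilon_i^{-1}r_i)$, but $r_{\riem}(x_i)<r_i$. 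Rescale by $r_i$ so that we may take $r_i=1$. By the definition of $(\varepsilon_i,1,2,B)$-selfsimilar there are pointed shrinking solitons $\mathfrak h_i=(N_i,h_i(t),q_i)=(\tilde N_i,\tilde h_i(t),\tilde q_i)\times(\mathbb R^2,g_{Eucl},0)$ with $\sup|\riem(h_i(-1))|\leq B$, $q_i\in S(N_i,h_i)$, such that $(\mathfrak g_i)_1$ is $\varepsilon_i$-close to $\mathfrak h_i$. Applying Lemma \ref{lem:soliton_compactness} to the $\mathfrak h_i$ and Hamilton compactness (using the curvature bound and $\kappa$ non-collapsing) to the $\mathfrak g_i$, a subsequence gives $\mathfrak g_i\to\mathfrak g_\infty$ and $\mathfrak h_i\to\mathfrak h_\infty$, and by uniqueness of limits $\mathfrak g_\infty$ is the Ricci flow induced by a shrinking soliton $\mathfrak h_\infty=(\tilde N_\infty,\tilde h_\infty(t),\tilde q_\infty)\times(\mathbb R^2,g_{Eucl},0)$ still satisfying $\sup|\riem(h_\infty(-1))|\leq B$ and $q_\infty\in S(N_\infty,h_\infty)$.

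Next I would use the Weyl bound. Smooth convergence on $B_{g_i(-1)}(x_i,\varepsilon_i^{-1})$ together with $r_i^2|W(g_i(-1))|<\varepsilon_i$ forces $W(h_\infty(-1))\equiv 0$ on all of $N_\infty$ (since $\varepsilon_i^{-1}\to\infty$, the good region exhausts $N_\infty$). Thus $\mathfrak h_\infty$ is a locally conformally flat shrinking gradient Ricci soliton; by the classification of such solitons (Ni--Wallach, Cao--Wang, Petersen--Wylie, Z.-H. Zhang) it is a finite quotient of $\mathbb R^n$, $\mathbb S^{n-1}\times\mathbb R$, or $\mathbb S^n$ — and since it splits at least two Euclidean factors, the only possibility is the Gaussian soliton $\mathbb R^n$. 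Hence $\mathfrak g_\infty$ is flat. Then Perelman's pseudolocality theorem \cite{Perelman02} applies to the $\mathfrak g_i$: being arbitrarily $C^{\lfloor 1/\varepsilon_i\rfloor}$-close to the flat flow on a large parabolic region, each $\mathfrak g_i$ has $r_{\riem}(x_i)\geq 1$ for $i$ large, contradicting $r_{\riem}(x_i)<1$.

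I expect the main obstacle to be the rigorous passage from the Weyl bound to the conclusion that the limit soliton is Gaussian. One must be careful that the $(\varepsilon,r,2,B)$-selfsimilarity only gives closeness to a soliton on the time slab $[-2+\varepsilon,-\varepsilon]$, while the curvature radius conclusion $r_{\riem}(x)\geq r$ requires control on $B_{g(-r^2)}(x,r)\times[-r^2,0]$ up to the singular time $0$; bridging the gap near $t=0$ is precisely where pseudolocality is essential, and one needs the flat limit (not merely a soliton limit) for pseudolocality to give curvature control rather than mere self-similarity. A secondary technical point is making sure the Weyl hypothesis at the single time $t=-r^2$ suffices to kill the full spacetime Weyl tensor of the limit: this follows since the limit is a soliton (self-similar), so vanishing of $W$ at one time propagates to all times. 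One also should note that the conclusion $r_{\riem}(x)\geq r$ is a closed condition stable under the convergence, so there is no circularity. I would assume the local conformal flatness classification of shrinking solitons as a black box, citing it rather than reproving it.
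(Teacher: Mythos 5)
Your proposal is correct and follows essentially the same route as the paper: a contradiction/compactness argument producing a limit shrinking soliton splitting at least two Euclidean factors, the Weyl hypothesis forcing the limit to be locally conformally flat and hence (by the classification of such shrinkers, the paper cites \cite{Zhang09}) the Gaussian soliton, and Perelman's pseudolocality then giving $r_{\riem}(x_i)\geq r_i$ for large $i$, a contradiction. The additional technical points you flag (propagating the Weyl vanishing via self-similarity, and using pseudolocality to bridge to the final time) are exactly the implicit steps in the paper's argument.
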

\begin{proof}
We argue by contradiction. Let $(M_i,g_i(t))_{t\in (-2,0)}$ be a sequence satisfying \eqref{type}, $x_i\in M_i$  and suppose that there are sequences $r_i\in(0,1]$ and $\varepsilon_i\searrow 0$  such that 
\begin{equation}\label{weyl_zero}
 r_i^2 |W(g_i(-r_i^2))|_{g_i(-r_i^2)}<\varepsilon_i
\end{equation}
in $B_{g_i(-r_i^2)}(x_i, \varepsilon_i^{-1} r_i)$ and $(M_i,g_i(t), x_i)_{t\in (-2,0)}$ is $(\varepsilon_i, r_i, 2,B)$-selfsimilar, but $r_{\riem}(x_i)< r_i$. 

By Lemma \ref{lem:soliton_compactness} there is a subsequence of $(M_i,r_i^{-2}g_i(r_i^2 t), x_i)_{t\in (-2,0)}$ converging to a shrinking Ricci soliton $(N, h(t), q)_{t\in (-2,0)}$, which splits at least 2 Euclidean factors.

 Inequality \eqref{weyl_zero} implies that $(N,h(t))$ has vanishing Weyl curvature. Since it splits more than one Euclidean factor, it has to be the Gaussian shrinking soliton, by \cite{Zhang09}. Perelman's pseudolocality theorem \cite{Perelman02}  then gives that  $r_{\riem}( x_i)\geq r_i$, which is a contradiction.
\end{proof}

\subsection{Regularity estimates}
We now couple the $\varepsilon$-regularity results of Lemmata \ref{lem:epsilon} and \ref{lem:epsilon_weyl} with the volume estimate of Theorem \ref{thm:vol_estimate} to prove the following.
 
 \begin{theorem}\label{thm:regularity1}
 Given $(M,g(t))_{t\in (-2,0)}\in \mathcal C(n,B,\kappa_0,\kappa_1)$ and $\eta\in (0,1)$ there exist $\alpha(B), \beta(B)>0$ and $C_{\eta}=C(n,B, \kappa_0,\kappa_1,\eta)<+\infty$ such that for every $x\in M$ and $0<\tau \leq \alpha$
 \begin{align}
 \vol_{g(0)}\left(  \{0<r_{\riem}<\sqrt\tau\}   \cap B_{g(-\alpha)}(x, \beta) \} \right) &\leq C_{\eta}  \tau^{1-\eta},\label{vol1}\\
\textrm{and}\qquad\int_{B_{g(-\alpha)}(x,\beta)\cap\{r_{\riem}>0\} } r_{\riem}^{-2(1-\eta)} d\mu_{g(0)} &\leq C_{\eta}.\label{radius1}
 \end{align}

 If in addition $\dim M=4$ and $g(t)$ has positive isotropic curvature then 
 \begin{align}
 \vol_{g(0)}\left(\{0<r_{\riem}<\sqrt\tau\} \cap B_{g(-\alpha)}(x, \beta)  \right) &\leq C_{\eta}  \tau^{\frac{3}{2}-\eta},\label{vol2}\\
\textrm{and}\qquad \int_{B_{g(-\alpha)}(x,\beta)\cap \{ r_{\riem}>0 \}} r_{\riem}^{-3(1-\eta)} d\mu_{g(0)} &\leq C_{\eta}.\label{radius2}
 \end{align}
 \end{theorem}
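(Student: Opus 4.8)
The plan is to derive the volume bounds \eqref{vol1} and \eqref{vol2} by feeding the $\varepsilon$-regularity Lemma~\ref{lem:epsilon} into the volume estimate of Theorem~\ref{thm:vol_estimate} (in the final-time form supplied by Remark~\ref{rmk:later_est}), and then to pass from these to the integral bounds \eqref{radius1}, \eqref{radius2} by a layer-cake argument. All of the analytic content is already contained in Theorem~\ref{thm:vol_estimate} and Lemma~\ref{lem:epsilon}; what remains is assembly, plus a careful accounting of auxiliary parameters.

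Fix $\eta\in(0,1)$, let $\alpha(B),\beta(B)$ be the constants of Theorem~\ref{thm:vol_estimate}, and let $\varepsilon=\varepsilon(B,\kappa_0)$ be the constant of Lemma~\ref{lem:epsilon}, whose hypotheses hold since $(M,g(t))_{t\in(-2,0)}\in\mathcal C(n,B,\kappa_0,\kappa_1)$. For $\tau\in(0,1]$, Lemma~\ref{lem:epsilon} gives $\{0<r_{\riem}<\sqrt\tau\}\subset S^{n-2}_{\varepsilon,\tau}$, and by the monotonicity of the quantitative stratification $S^{n-2}_{\varepsilon,\tau}\subset S^{n-2}_{\eta',\tau}$ for every $\eta'\le\varepsilon$. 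Since a point with $r_{\riem}>0$ lies in the set where $g(0)$ extends smoothly, Remark~\ref{rmk:later_est} with $k=n-2$ yields, for $0<\tau\le\alpha$,
\begin{equation*}
\vol_{g(0)}\!\left(\{0<r_{\riem}<\sqrt\tau\}\cap B_{g(-\alpha)}(x,\beta)\right)\le C_{\eta'}\,\tau^{\frac{2-\eta'}{2}}.
\end{equation*}
Taking $\eta'=\min\{2\eta,\varepsilon\}$ and using $\tau\le 1$ gives \eqref{vol1} (valid, in fact, for every value of the margin parameter in $(0,1)$). When $\dim M=4$ and $g(t)$ has positive isotropic curvature the argument is identical, using the inclusion $\{0<r_{\riem}<\sqrt\tau\}\subset S^{1}_{\varepsilon,\tau}$ from the second part of Lemma~\ref{lem:epsilon} and $k=1$, $n=4$ in Remark~\ref{rmk:later_est}, which produces the exponent $\tfrac{3-\eta'}{2}$ and hence \eqref{vol2}.

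To deduce \eqref{radius1}, set $A=B_{g(-\alpha)}(x,\beta)\cap\{r_{\riem}>0\}$ and decompose $A$ into the pieces $A_j=A\cap\{2^{-j-1}\le r_{\riem}<2^{-j}\}$, $j\ge 0$, on which $r_{\riem}^{-2(1-\eta)}\le 4^{(j+1)(1-\eta)}$. For $j$ large enough that $4^{-j}\le\alpha$, estimate \eqref{vol1} with parameter $\eta/2$ and $\tau=4^{-j}$ gives $\vol_{g(0)}(A_j)\le C_{\eta/2}\,4^{-j(1-\eta/2)}$, hence $\int_{A_j}r_{\riem}^{-2(1-\eta)}\,d\mu_{g(0)}\le C\,4^{-j\eta/2}$, and the series over such $j$ converges. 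The finitely many remaining $j$ contribute a bounded amount, controlled by the crude bound $\vol_{g(0)}(A)\le c(n)\kappa_1\beta^n$, which follows from the non-inflation hypothesis via Lemma~\ref{lem:vol_control} together with the volume evolution and the scalar curvature lower bound for Ricci flow. Summing in $j$ gives \eqref{radius1}; estimate \eqref{radius2} follows from \eqref{vol2} in exactly the same way, with $3(1-\eta)$ in place of $2(1-\eta)$ and \eqref{vol2} applied at parameter $\eta/2$, which again yields a geometrically decaying series.

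The one place demanding care — and the only real, if mild, obstacle — is the bookkeeping of the margin parameters: one must spend a fixed fraction of the $\eta$ in Theorem~\ref{thm:vol_estimate} (running it with parameter $\eta/2$) in order for the dyadic series in the layer-cake step to converge while the powers of $r_{\riem}$ still come out to be exactly $2(1-\eta)$ and $3(1-\eta)$; and one must use the volume estimate at the final time $t=0$ rather than at $t=-\tau$, which is what Remark~\ref{rmk:later_est} is for.
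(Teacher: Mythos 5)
Your proposal is correct and follows essentially the same route as the paper: the volume bounds come from feeding Lemma \ref{lem:epsilon} into Theorem \ref{thm:vol_estimate} via Remark \ref{rmk:later_est} (spending part of the $\eta$-margin in the exponent, with the cap at $\varepsilon$ handled by the monotonicity of $S^k_{\eta,\tau}$ in $\eta$), and the integral bounds then follow by a layer-cake argument together with the crude volume bound from non-inflation and the scalar curvature lower bound. The only cosmetic difference is that you sum over dyadic level sets of $r_{\riem}$ while the paper writes $r_{\riem}^{-p}=\tfrac1p\int_{r_{\riem}}^1 s^{-(p+1)}\,ds+1$ and integrates in $s$; these are the same computation.
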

 
 \begin{proof}
 Let $\alpha(B)$ and $\beta(B)$ be given by Theorem \ref{thm:vol_estimate}. Then, estimates \eqref{vol1} and \eqref{vol2} easily follow from the volume estimate of Theorem \ref{thm:vol_estimate}, Remark \ref{rmk:later_est} and Lemma \ref{lem:epsilon}.  
 
To prove \eqref{radius1} and \eqref{radius2} we compute
 \begin{align*}
 &\int_{B_{g(-\alpha)}(x,\beta)\cap \{r_{\riem}>0\}} r_{\riem}^{-p} d\mu_{g(0)}  =\\
 & =\int_{B_{g(-\alpha)}(x,\beta)\cap  \{r_{\riem}>0\} } \left(  \frac{1}{p}\int_{r_{\riem}}^1 s^{-(p+1)} ds + 1 \right) d\mu_{g(0)}, \\
&\leq  \frac{1}{p} \int_0^1 \frac{1}{s^{p+1}} \vol_{g(0)}\big(  \{0< r_{\riem} \leq  s \} \cap B_{g(-\alpha)}(x,\beta)\big) ds +\vol_{g(0)}(B_{g(-\alpha)}(x,\beta)),\\
&\leq C(\eta,p,n, B,\kappa_0,\kappa_1) \int_0^1 s^{-(p+1)+l-\eta}ds +\vol_{g(0)}(B_{g(-\alpha)}(x,\beta)).
\end{align*}
For the last inequality we used either \eqref{vol1} or \eqref{vol2}, substituting $l=2$ or $l=3$ respectively. Moreover, $\vol_{g(0)}(B_{g(-\alpha)}(x,\beta))$ should be interpreted as $\vol_{g(0)}(B_{g(-\alpha)}(x,\beta) \cap \{r_{\riem}>0\})$.

Thus, for every $p= l -2\eta$ we can  bound, for some $C_p=C(p,n,B,\kappa_0,\kappa_1)$,
\begin{align*}
\int_{B_{g(-\alpha)}(x,\beta)\cap \{r_{\riem}>0\}} r_{\riem}^{-p} d\mu_{g(0)} &\leq C_p +\vol_{g(0)}(B_{g(-\alpha)}(x,\beta)),\\
& \leq C_p+ C(n) \vol_{g(-\alpha)}(B_{g(-\alpha)}(x,\beta)),\\
& \leq C_p + C(n,\kappa_1) \beta^n.
\end{align*}
Here, we used the volume control due to the standard scalar curvature bound $R\geq -\frac{n}{2(\tau+2)}$, as in Remark \ref{rmk:later_est}, and Lemma \ref{lem:vol_control}.  This suffices to prove \eqref{radius1} and \eqref{radius2}.
 \end{proof}

 \begin{theorem}\label{thm:regularity2}
 Given $(M,g(t))_{t\in(-2,0)}\in\mathcal C(n,B,\kappa_0,\kappa_1)$ and $\eta\in(0,1)$ there exist $\alpha(B), \beta(B)>0$, $\varepsilon(B)>0$ and $C_\eta=C(n,B, \kappa_0, \kappa_1,\eta)<+\infty$ such that if for every $t\in(-2,0)$
 \begin{equation}
 \sup_{B_{g(-\alpha)}(x, 2\varepsilon^{-1} \beta) } |W(g(t))|_{g(t)} <\varepsilon,\label{weyl_small}
 \end{equation}
 then  for every $0<\tau \leq\alpha$
 \begin{align}
 \vol_{g(0)}\left(\{0<r_{\riem}<\sqrt\tau\}\cap B_{g(-\alpha)}(x, \beta)  \right) &\leq C_{\eta}  \tau^{\frac{n-1}{2}-\eta},\\ 
 \textrm{and}\qquad \int_{B_{g(-\alpha)}(x,\beta) \cap \{r_{\riem}>0\}} r_{\riem}^{-(n-1)(1-\eta)} d\mu_{g(0)} &\leq C_{\eta}. \label{radius3}
 \end{align}
 \end{theorem}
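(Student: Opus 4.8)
The plan is to repeat the proof of Theorem \ref{thm:regularity1} almost verbatim, the only change being that the $\varepsilon$-regularity Lemma \ref{lem:epsilon} is replaced by the Weyl-curvature $\varepsilon$-regularity Lemma \ref{lem:epsilon_weyl}. Granting hypothesis \eqref{weyl_small}, this substitution traps the high-curvature set $\{0<r_{\riem}<\sqrt\tau\}$ inside the \emph{bottom} stratum $S^{1}_{\varepsilon,\tau}$ rather than the top stratum $S^{n-2}_{\varepsilon,\tau}$; feeding this into Theorem \ref{thm:vol_estimate} through Remark \ref{rmk:later_est} produces the exponent $\tfrac{n-1}{2}-\eta$ in place of $1-\eta$, and the bound on $r_{\riem}^{-(n-1)(1-\eta)}$ then follows from the volume estimate by exactly the layer-cake integration carried out in the proof of Theorem \ref{thm:regularity1}, with the exponent $2$ there replaced by $n-1$ and with Lemma \ref{lem:vol_control} plus the scalar curvature lower bound absorbing the remaining volume term.

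First I would fix $\varepsilon=\varepsilon(B)$ to be the constant produced by Lemma \ref{lem:epsilon_weyl} with non-collapsing constant $\kappa=\kappa_0$ (every flow in $\mathcal C(n,B,\kappa_0,\kappa_1)$ satisfies \eqref{type} and is $\kappa_0$ non-collapsed below scale $1$), and take $\alpha(B),\beta(B)>0$ to be the constants of Theorem \ref{thm:vol_estimate}, shrinking $\alpha$ further (still depending only on $B$) so that $\alpha\ll\beta^{2}$. The heart of the argument is the inclusion
\[
\{0<r_{\riem}<\sqrt\tau\}\cap B_{g(-\alpha)}(x,\beta)\ \subset\ S^{1}_{\varepsilon,\tau}\cap\Omega,\qquad 0<\tau\le\alpha,
\]
where $\Omega=\{y\in M:\ \sup_{t\in(-2,0)}|\riem(g(t))|_{g(t)}(y)<+\infty\}\supset\{r_{\riem}>0\}$. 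To establish it, let $y$ lie in the left-hand side and suppose, for contradiction, $y\notin S^{1}_{\varepsilon,\tau}$; then $\mathfrak g_y$ is $(\varepsilon,s,2,B)$-selfsimilar for some $s\in[\sqrt\tau,1]$, and the goal is to derive $r_{\riem}(y)\ge\sqrt\tau$ via Lemma \ref{lem:epsilon_weyl}. Hypothesis (1) of that lemma is in hand; hypothesis (2) asks for the Weyl smallness $(s')^{2}|W(g(-(s')^{2}))|_{g(-(s')^{2})}<\varepsilon$ on $B_{g(-(s')^{2})}(y,\varepsilon^{-1}s')$ at some scale $s'\le s$, and this is the only place where \eqref{weyl_small} is used.

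The subtlety is that \eqref{weyl_small} controls $|W|$ only on the fixed ball $B_{g(-\alpha)}(x,2\varepsilon^{-1}\beta)$, whereas $B_{g(-(s')^{2})}(y,\varepsilon^{-1}s')$ may be much larger once $s'$ is close to $1$; so one must apply Lemma \ref{lem:epsilon_weyl} at a small scale. For $s'\le c_{0}\beta$ (with $c_{0}=c_{0}(n,B)$ small) the Type I bound on the interval $[-\alpha,-(s')^{2}]$ gives distance distortion with constant close to $1$, whence $B_{g(-(s')^{2})}(y,\varepsilon^{-1}s')\subset B_{g(-\alpha)}(x,2\varepsilon^{-1}\beta)$ because $\alpha\ll\beta^{2}$, and $s'\le1$ makes $(s')^{2}|W|\le|W|<\varepsilon$; so Lemma \ref{lem:epsilon_weyl} applies at scale $s'$ and gives $r_{\riem}(y)\ge s'$. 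To pass from the scale $s$ down to such an $s'$ I would use that a flow which is $(\varepsilon,s,2,B)$-selfsimilar is also $(C\varepsilon,s',2,B)$-selfsimilar for every $s'\in[c_{0}\beta,s]$: the parabolically rescaled soliton is again a soliton splitting $\mathbb R^{2}$, with unchanged curvature bound and unchanged (scale-invariant) spine, the bounds of Definition \ref{distance} do not deteriorate under rescaling by $s'/s\le1$, and the only additional information needed — closeness on a slightly longer time interval — is furnished by the Type I bound together with stability/continuous dependence on initial data for Ricci flows with bounded curvature; since $s'/s$ stays bounded below by $c_0\beta$, all constants are uniform. Choosing the $\varepsilon$ defining $S^{1}_{\varepsilon,\tau}$ small enough that $C\varepsilon$ is below the Lemma \ref{lem:epsilon_weyl} threshold, and applying this with $s'=\min(s,c_{0}\beta)$, yields $r_{\riem}(y)\ge\min(s,c_{0}\beta)\ge\sqrt\tau$ — since $\sqrt\tau\le\sqrt\alpha<c_{0}\beta$ — contradicting $r_{\riem}(y)<\sqrt\tau$. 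Given the inclusion, Remark \ref{rmk:later_est} with $k=1$, the monotonicity $S^{1}_{\varepsilon,\tau}\subset S^{1}_{\eta',\tau}$ for $\eta'\le\varepsilon$, and the reparametrisation $\eta'=\min(2\eta,\varepsilon)$ (as in Theorem \ref{thm:regularity1}) give the asserted volume bound, and then \eqref{radius3} follows by the layer-cake computation as described above. I expect the main obstacle, relative to Theorem \ref{thm:regularity1}, to be precisely this transfer of the spatially localised, all-times Weyl bound \eqref{weyl_small} into the scale-$s'$ hypothesis of Lemma \ref{lem:epsilon_weyl} for all relevant scales simultaneously, i.e. the interplay between the choice $\alpha\ll\beta^{2}$ and the downward propagation of selfsimilarity; everything else is a routine repetition of Section \ref{section:c_estimates}.
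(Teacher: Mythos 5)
Your overall strategy coincides with the paper's: replace Lemma \ref{lem:epsilon} by Lemma \ref{lem:epsilon_weyl}, trap $\{0<r_{\riem}<\sqrt\tau\}\cap B_{g(-\alpha)}(x,\beta)$ in the stratum $S^1_{\varepsilon,\tau}$, feed this into Theorem \ref{thm:vol_estimate} (via Remark \ref{rmk:later_est}) with $k=1$, and finish with the layer-cake computation of Theorem \ref{thm:regularity1}. You also correctly isolate the one genuinely new point, namely converting the fixed-ball hypothesis \eqref{weyl_small} into hypothesis (2) of Lemma \ref{lem:epsilon_weyl} at the relevant scales. But two of your justifications do not hold as written. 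First, the containment $B_{g(-(s')^2)}(y,\varepsilon^{-1}s')\subset B_{g(-\alpha)}(x,2\varepsilon^{-1}\beta)$ is not a consequence of ``distance distortion with constant close to $1$'': the Type I bound only gives $|\ric|\leq c(n)B/|t|$, whose multiplicative distortion factor over $[-\alpha,-(s')^2]$ is $(\alpha/(s')^2)^{c(n)B}$, which is unbounded as $s'\to 0$, and in your argument $s'$ ranges down to $\sqrt\tau$. What is true, and what the paper's proof essentially consists of, is the \emph{additive} estimate: by Lemma 2.6 of \cite{Naber10} one has \eqref{d_distance}, which integrates to $d_{g(-\alpha)}(y,z)\leq d_{g(-(s')^2)}(y,z)+C(n,B)\sqrt\alpha$, yielding $B_{g(t)}(y,\varepsilon^{-1}\beta)\subset B_{g(-\alpha)}(x,\beta(1+\varepsilon^{-1})+C_3\sqrt\alpha)\subset B_{g(-\alpha)}(x,2\varepsilon^{-1}\beta)$ for $t\in[-\alpha,0)$ once $\varepsilon$ is small. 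Without this ingredient your key inclusion does not go through.

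Second, your downward propagation of selfsimilarity is asserted by appeal to ``stability/continuous dependence on initial data for Ricci flows with bounded curvature''; no such quantitative statement, in the form you need ($C^{\lfloor 1/\varepsilon\rfloor}$-closeness on balls of radius $\varepsilon^{-1}$ propagated forward toward the singular time), is available off the shelf or proved in the paper — and in fact none is needed. Since $s'/s\geq c_0\beta$, the space-time region required for $(\varepsilon',s',2,B)$-selfsimilarity corresponds, after rescaling, to times $\lambda^2 t$ with $t\in[-2+\varepsilon',-\varepsilon']$ and $\lambda=s'/s$, and these lie inside $[-2+\varepsilon,-\varepsilon]$ as soon as the stratification parameter satisfies $\varepsilon\leq (c_0\beta)^2\varepsilon'$; moreover the parabolically rescaled comparison soliton still satisfies $|\riem(h(-1))|_{h(-1)}\leq B$ (a soliton flow with that normalization is Type I with constant $B$), splits the same $\mathbb R^2$ factor and has the same spine. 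So the propagation is a pure restriction/rescaling statement and should be proved as such, with no forward-in-time extension of closeness; as phrased, this step is a gap. Note also that the paper's own proof dispenses with propagation between scales entirely: it establishes the distortion containment for all $t\in[-\alpha,0)$, chooses $\varepsilon$ small, and applies Lemma \ref{lem:epsilon_weyl} scale by scale; ruling out selfsimilarity at scales above $\sqrt\alpha$ is immaterial for the volume bound, because the covering argument behind Theorem \ref{thm:vol_estimate} only ever uses non-selfsimilarity at the scales $\tau_{j-1}^{1/2}\leq\sqrt\alpha$.
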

 \begin{proof}
 Let $\alpha,\beta$ given by Theorem \ref{thm:vol_estimate} and $\varepsilon$ by Lemma \ref{lem:epsilon_weyl}. Also, recall the following estimate from Lemma 2.6 of \cite{Naber10}: along any unit speed minimizing  $g(t)$-geodesic $\sigma(s)$, $s\in [0, l]$, 
\begin{equation}
\int_0^l \ric(\dot \sigma(s), \dot\sigma(s)) ds \leq \frac{C_1}{\sqrt{|t|}}, 
\end{equation}
for some constant $C_1=C_1(n,B)<+\infty$. It follows that
\begin{equation}\label{d_distance}
\frac{d}{dt} d_{g(t)} (y,z) \geq - \frac{C_2(n,B)}{ \sqrt{|t|}}.
\end{equation}
Integrating \eqref{d_distance} gives, for every $y\in B_{g(-\alpha)}(x,\beta)$ and $t\in [-\alpha,0)$,
\begin{equation}
B_{g(t)}(y, \varepsilon^{-1} \beta) \subset B_{g(-\alpha)}(x, \beta(1 + \varepsilon^{-1}) + C_3 \sqrt\alpha),
\end{equation}
  where $C_3=C_3(n,B)<+\infty$.
 
 Choosing $\varepsilon>0$ small enough so that
 \begin{equation*}
 B_{g(-\alpha)}(x,\beta(1+\varepsilon^{-1}) + C_3 \sqrt\alpha) \subset B_{g(-\alpha)}(x,2\varepsilon^{-1} \beta),
 \end{equation*}
 and using Lemma \ref{lem:epsilon_weyl}, we obtain that for every $r\in(0,2\beta]$ 
 \begin{equation}
 \{r_{\riem}<r\}\subset S^1_{\epsilon,r^2}.
 \end{equation}
 Note that $2\beta\leq 1$ by the proof of Theorem \ref{thm:vol_estimate}. The result then follows by arguing as in Theorem \ref{thm:regularity1}.
 \end{proof}
 
  \begin{proof}[Proof of Theorem \ref{introthm}]
 Estimate \eqref{curvature1} is an immediate consequence of estimates \eqref{radius1} and \eqref{radius2}, since Shi's local derivative estimates imply
\begin{align*}
&\int_{B_{g(-\alpha)}(x,\beta)\cap \{r_{\riem}>0\} } |\nabla^j \riem(g(0))|_{g(0)}^p d\mu_{g(0)} \\
&\leq C(n,p,j)\int_{B_{g(-\alpha)}(x,\beta) \cap \{r_{\riem}>0\} } r_{\riem}^{-(j+2)p} d\mu_{g(0)}. 
\end{align*}
 \end{proof}
 
 \begin{remark}\label{weyl1}
Under the assumptions of Theorem \ref{introthm}, if in addition the Weyl curvature satisfies assumption \eqref{weyl_small} of Theorem \ref{thm:regularity2}, then the estimates of Theorem \ref{introthm} hold for any $p\in (0,n-1)$.
\end{remark}

 \subsubsection{General Type I Ricci flows.} Given any complete Ricci flow $(M,g(t))_{t\in [0,T)}$, $T>1$, we may define the curvature radius of $g(t)$ at a non-singular point $(x,t)\in M\times [1,T]$ as
 \begin{equation*}
r_{\riem}(x,t)=\sup\left\{r\leq 1, \; |\riem(g)|\leq r^{-2}\;\textrm{in}\; B_{g(t-r^2)}(x,r)\times [t-r^2,t] \right\},
\end{equation*}
and $r_{\riem}(x,T)=0$, if $(x,T)$ is singular.
 
 The following theorem holds:
 
\begin{theorem}\label{thm:integral_estimates}
Let $(M^n,g(t))_{t\in [0,T)}$, $\dim M=n$ and $T>1$, be a compact Ricci flow satisfying \eqref{type} for some constant $B<+\infty$. Then for every \linebreak$p\in (0,2)$, there exists $C_p=C(g(0),p)<+\infty$ such that
\begin{equation}\label{eqn:int_estimate}
\int_{M\cap \{r_{\riem}(\cdot,t)>0\}} r_{\riem}^{-p} (\cdot,t) d\mu_{g(t)} \leq C_p
\end{equation}
for every $t\in [1,T]$.

Moreover,  if $\dim M=4$ and $g(t)$ has positive isotropic curvature, or if $\sup_{M\times [0,T)}|W(g(t))|_{g(t)} < +\infty$, the estimate above holds for $p\in (0,n-1)$.
\end{theorem}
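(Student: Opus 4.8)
The plan is to reduce Theorem~\ref{thm:integral_estimates} to the localized estimate of Theorem~\ref{introthm} (via Theorems~\ref{thm:regularity1} and \ref{thm:regularity2}) by a parabolic rescaling and a covering argument. First I would observe that, because $M$ is compact and $(g(t))_{t\in[0,T)}$ satisfies \eqref{type}, the flow is $\kappa_0$ non-collapsed and $\kappa_1$ non-inflated below some scale $\rho_0>0$ for constants depending only on $g(0)$ and $B$; non-collapsing follows from Perelman's no-local-collapsing theorem together with the Type~I bound, and non-inflation follows from, e.g., the work of Chen--Wang or the standard argument giving upper volume ratio bounds under a local scalar curvature bound (this is exactly the bound built into the definition of the class $\mathcal{C}(n,B,\kappa_0,\kappa_1)$). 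Thus for any $\bar t\in[1,T]$, rescaling parabolically so that the interval $[\bar t-2\rho_0^2,\bar t]$ becomes $(-2,0]$ and translating time, the rescaled pointed flow around any $x\in M$ lies in $\mathcal{C}(n,B,\kappa_0,\kappa_1)$.

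Next, for a fixed $\bar t$ I would cover $M$ by finitely many balls $B_{g(\bar t-\alpha\rho_0^2)}(x_i,\beta\rho_0)$, where $\alpha(B),\beta(B)$ are the constants from Theorem~\ref{thm:regularity1}; the number $N$ of such balls is bounded in terms of $\vol_{g(0)}(M)$, $n$, $B$, $\kappa_0$, $\kappa_1$ and $\rho_0$, using the volume control of Lemma~\ref{lem:vol_control} (or directly the non-collapsing/non-inflation bounds) and the fact that the volume of $(M,g(t))$ is comparable to $\vol_{g(0)}(M)$ for all $t$ by the lower scalar curvature bound $R\ge -\tfrac{n}{2(t+2)}$ as in Remark~\ref{rmk:later_est}. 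On each ball, Theorem~\ref{thm:regularity1} (after undoing the rescaling, which changes $r_{\riem}$ by the fixed factor $\rho_0$) gives
\begin{equation*}
\int_{B\cap\{r_{\riem}(\cdot,\bar t)>0\}} r_{\riem}^{-p}(\cdot,\bar t)\,d\mu_{g(\bar t)} \le C(g(0),p)
\end{equation*}
for every $p<2$, and summing over the $N$ balls yields \eqref{eqn:int_estimate} with $C_p$ independent of $\bar t$. The cases $p\in(0,n-1)$ under a positive-isotropic-curvature assumption in dimension four, or under a global Weyl bound, follow identically by replacing Theorem~\ref{thm:regularity1} with the positive-isotropic-curvature part of Theorem~\ref{thm:regularity1} or with Theorem~\ref{thm:regularity2}; note that the smallness hypothesis \eqref{weyl_small} is arranged by choosing $\rho_0$ small enough, since rescaling multiplies $|W|$ by $\rho_0^2$ and $\sup_{M\times[0,T)}|W|<+\infty$.

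The main point to be careful about — and what I expect to be the only real obstacle — is uniformity in $\bar t$ as $\bar t\to T$, including the limiting statement at $t=T$. Here one uses that $\rho_0$, $\alpha$, $\beta$, the covering number $N$, and the constant in Theorem~\ref{thm:regularity1} depend only on $n$, $B$, $\kappa_0$, $\kappa_1$ and $\vol_{g(0)}(M)$, none of which degenerate as $\bar t\uparrow T$; the estimate at $t=T$ then follows either by the same argument applied to the rescaled flow on $(-2,0]$ with the singular time at $0$ (the curvature radius $r_{\riem}(\cdot,T)$ being defined to vanish at singular points, exactly as in Theorem~\ref{introthm} where \eqref{curvature1} holds at $g(0)$ on $\{r_{\riem}>0\}$), or by a Fatou-type argument letting $\bar t\uparrow T$ in \eqref{eqn:int_estimate}. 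One should also check that the restriction $T>1$ is only needed so that the backward parabolic neighborhood of size $2\rho_0^2$ fits inside $[0,T)$ for $\bar t\ge 1$, which is automatic once $\rho_0^2\le 1/2$.
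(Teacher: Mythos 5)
Your proposal is correct and follows essentially the same route as the paper: establish $\kappa_0$ non-collapsing and $\kappa_1$ non-inflation at a fixed scale $\rho$, rescale so that the flow around each time $\bar t\in[1,T]$ lies in $\mathcal C(n,B,\kappa_0,\kappa_1)$, cover $M$ by controlled many balls of radius comparable to $\rho\beta$ at time $\bar t-\rho^2\alpha$, apply Theorem \ref{thm:regularity1} (resp.\ its positive isotropic curvature part or Theorem \ref{thm:regularity2}, choosing $\rho$ small so that the rescaled Weyl tensor is below $\varepsilon$), and sum. The only cosmetic differences are how you bound the covering number (volume comparison versus the paper's direct use of the curvature bound at times $t\leq T-\rho^2\alpha$) and the citation for non-inflation, neither of which affects the argument.
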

\begin{proof}
First, observe that, due to the non-collapsing \cite{Perelman02} and non-inflating \cite{Zhang12} properties of the Ricci flow,  there exist $\kappa_0,\kappa_1>0$ and $\rho>0$, which depend on $g(0), T$ and $B$, such that the following holds: for every $\bar t\in [1,T]$ the flow $(M,\rho^{-2}g(\rho^2 t+\bar t))_{t\in(-2,0)}$ is in the class $\mathcal C(n,B,\kappa_0,\kappa_1)$.

Now, let $\alpha,\beta$ be provided by applying Theorem \ref{thm:vol_estimate} to the class $\mathcal C(n,B,\kappa_0,\kappa_1)$. Moreover, let $N(t)$ be the minimal number of $g(t)$-balls of radius $\rho\beta$ required to cover $M$. 

For any $p\in(0,2)$, applying Theorem \ref{thm:regularity1} to $(M,\rho^{-2}g(\rho^2 t+\bar t))_{t\in(-2,0)}$ gives
\begin{align*}\label{eqn:main_thm_1}
&\int_{M\cap \{r_{\riem}(\cdot,\bar t)>0\}} r_{\riem}^{-p} (\cdot,\bar t) d\mu_{g(\bar t)} \leq\\
&\leq \sum_{i=1}^{N(\bar t-\rho^2\alpha)} \int_{B_{g(\bar t-\rho^2\alpha)}(x_i,\rho\beta)\cap \{r_{\riem}(\cdot,\bar t)>0\}} r_{\riem}^{-p} (\cdot,\bar t) d\mu_{g(\bar t)},\\
&\leq N(\bar t-\rho^2 \alpha) C(n,p,B,\kappa_0,\kappa_1) \rho^{\frac{n}{2}-p}.
\end{align*}
To conclude the proof, note that we can estimate $N(\bar t-\rho^2\alpha)\leq C(g(0),B)$, since $|\riem(g(t))|_{g(t)}\leq \frac{B}{\rho^2\alpha}$ for $t\leq T-\rho^2\alpha$.

The remaining assertions of the theorem follow from a similar line of reasoning, applying Theorems \ref{thm:regularity1} and \ref{thm:regularity2} respectively. Note that, in order to apply Theorem \ref{thm:regularity2} when there is a uniform bound on the Weyl curvature, we need to chose $\rho>0$ small enough so that $\rho^2 \sup_{M\times [0,T)}|W(g(t))|_{g(t)} <\varepsilon$, $\varepsilon$ given by Theorem  \ref{thm:regularity2}.
\end{proof}

\begin{proof}[Proof of Theorem \ref{thm:curv_estimates}]
Estimate \eqref{eqn:introder_curv_estimate1} follows from Theorem \ref{thm:integral_estimates}, as in the proof of Theorem  \ref{introthm}. To obtain estimate \eqref{eqn:introder_curv_estimate2} we first write $r_{\riem}^{-(j+2)p}=r_{\riem}^{-\frac{l(j+2)p}{l+2}}r_{\riem}^{-\frac{2(j+2)p}{l+2}}$, substituting $l=2$ or $3$, depending on whether we are in the general case or the case of positive isotropic curvature respectively.

Then we estimate
\begin{align*}
&\int_{1}^T \int_{M\cap \{r_{\riem}(\cdot,s)>0\}}  |\nabla^j \riem(g(s))|_{g(t)}^{p} d\mu_{g(s)} ds \leq \\
&\leq C(n,p,j) \int_{1}^T \int_{M\cap \{r_{\riem}(\cdot,s)>0\}} r_{\riem}^{-(j+2)p} d\mu_{g(s)} ds, \\
&=  C(n,p,j) \int_{1}^T \int_{M\cap \{r_{\riem}(\cdot,s)>0\}} r_{\riem}^{-\frac{l(j+2)p}{l+2}}r_{\riem}^{-\frac{2(j+2)p}{l+2}} d\mu_{g(s)} ds, \\
&\leq C(n,p,j) \int_{1}^T \int_{M\cap \{r_{\riem}(\cdot,s)>0\}} r_{\riem}^{-\frac{l(j+2)p}{l+2}} \left(B /|s| \right)^{\frac{(j+2)p}{l+2}} d\mu_{g(s)} ds, 
\end{align*}
which implies the required bound, as long as $p\in (0,\frac{l+2}{j+2})$, by Theorem \ref{thm:integral_estimates}.
\end{proof}
\begin{remark}\label{weyl2}
Under the assumptions of Theorem \ref{thm:integral_estimates}, if the Weyl curvature is uniformly bounded for all $t\in [0,T)$, then the estimates of Theorem \ref{thm:curv_estimates} hold for any $p\in (0,n-1)$.
\end{remark}

\providecommand{\bysame}{\leavevmode\hbox to3em{\hrulefill}\thinspace}
\providecommand{\MR}{\relax\ifhmode\unskip\space\fi MR }

\providecommand{\MRhref}[2]{%
  \href{http://www.ams.org/mathscinet-getitem?mr=#1}{#2}
}
\providecommand{\href}[2]{#2}

\end{document}